\newtheorem{theorem}{Theorem}[section]
\newenvironment{adfenumerate}{
\begin{enumerate}
\setlength{\itemsep}{0.5mm}
\setlength{\parskip}{0mm}
\setlength{\parsep}{0mm}
}{
\end{enumerate}
}
\newcommand{\adfmod}[1]{~(\mathrm{mod}~#1)}
\newcommand{\adfhide}[1]{}
\newcommand{\ADFvfyParStart}[1]{\\\ [#1]}
\newcommand{\adfDgap}{\vskip 2mm}              
\newcommand{\adfLgap}{\vskip 0mm}              
\newcommand{\adfsplit}{\par}                   
\newcommand{\adfBfont}{\normalsize}            
\begin{document}
\title{Group divisible designs with block size five: direct constructions}
\author{A. D. Forbes}
\address{LSBU Business School,
London South Bank University,
103 Borough Road,
London SE1 0AA, UK.}
\email{anthony.d.forbes@gmail.com}
\date{version 2.1 \today}
\subjclass[2010]{05B30}
\keywords{Group divisible design, 5-GDD}

\begin{abstract}
We give direct constructions for 233 group divisible designs with block size five, mostly of type $g^u m^1$, $m > 0$.
\end{abstract}

\maketitle


\section{Introduction}\label{sec:Group divisible designs}
For our purpose, a {\em group divisible design} with block size 5, $5$-GDD,
and type $g_1^{u_1} g_2^{u_2} \dots g_r^{u_r}$ is an ordered triple ($V,\mathcal{G},\mathcal{B}$) such that:
\begin{adfenumerate}
\item[(i)]{$V$
is a base set of cardinality $u_1 g_1 + u_2 g_2 + \dots + u_r g_r$;}
\item[(ii)]{$\mathcal{G}$
is a partition of $V$ into $u_i$ subsets of cardinality $g_i$, $i = 1, 2, \dots, r$, called \textit{groups};}
\item[(iii)]{$\mathcal{B}$
is a non-empty collection of 5-subsets of $V$ called \textit{blocks}; and}
\item[(iv)]{each pair of elements from distinct groups occurs in precisely one block but no pair of
elements from the same group occurs in any block.}
\end{adfenumerate}

The sole purpose of this paper is to prove Theorem~\ref{thm:5-GDD-ADF}, below, which asserts the existence of 233 specific 5-GDDs.
Direct constructions are given in each case, but it is possible that some might arise from known structures.
The paper is cited by \cite{Forbes2022G} to which it is intended to act as a supplement.
We hope it will be of some use to researchers.
\begin{theorem}\label{thm:5-GDD-ADF}
There exist $5$-$\mathrm{GDD}$s of types
$ 1^{48} 9^1 $,   
$ 1^{60} 9^1 $,   
$ 1^{60} 13^1 $,   
$ 1^{68} 9^1 $,   
$ 1^{72} 17^1 $,   
$ 1^{80} 9^1 $,   
$ 1^{80} 13^1 $,   
$ 1^{80} 25^1 $,   
$ 1^{84} 21^1 $,   
$ 1^{88} 9^1 $,   
$ 1^{92} 17^1 $,   
$ 1^{96} 25^1 $,   
$ 1^{100} 9^1 $,   
$ 1^{100} 13^1 $,   
$ 1^{100} 25^1 $,   
$ 1^{104} 21^1 $,   
$ 1^{108} 9^1 $,   
$ 1^{108} 29^1 $,   
$ 1^{112} 17^1 $,   
$ 1^{124} 21^1 $,   
$ 1^{128} 9^1 $,   
$ 1^{128} 29^1 $,   
$ 1^{132} 17^1 $,   
$ 1^{132} 37^1 $,   
$ 1^{136} 25^1 $,   
$ 1^{144} 21^1 $,   
$ 1^{144} 41^1 $,   
$ 1^{148} 29^1 $,   
$ 1^{152} 17^1 $,   
$ 1^{152} 37^1 $,   
$ 1^{156} 25^1 $,   
$ 1^{156} 45^1 $,   
$ 1^{160} 33^1 $,   
$ 1^{164} 21^1 $,   
$ 1^{164} 41^1 $,   
$ 1^{168} 29^1 $,   
$ 1^{168} 49^1 $,   
$ 1^{172} 17^1 $,   
$ 1^{176} 25^1 $,   
$ 1^{176} 45^1 $,   
$ 1^{184} 21^1 $,   
$ 1^{184} 41^1 $,   
$ 1^{188} 29^1 $,   
$ 1^{192} 17^1 $,   
$ 1^{192} 37^1 $,   
$ 1^{192} 57^1 $,   
$ 1^{196} 25^1 $,   
$ 1^{196} 45^1 $,   
$ 2^{32} 14^1 $,   
$ 2^{40} 6^1 $,   %
$ 2^{48} 18^1 $,   
$ 2^{52} 14^1 $,   
$ 2^{56} 10^1 $,   
$ 2^{60} 6^1 $,   
$ 3^{20} 11^1 $,   
$ 3^{28} 7^1 $,   
$ 3^{32} 11^1 $,   
$ 3^{36} 15^1 $,   
$ 5^{24} 25^1 $,   
$ 5^{28} 25^1 $,   
$ 5^{32} 25^1 $,   
$ 5^{32} 45^1 $,   
$ 5^{36} 45^1 $,   
$ 5^{40} 45^1 $,   
$ 5^{44} 25^1 $,   
$ 5^{44} 45^1 $,   
$ 6^{16} 10^1 $,   
$ 8^8 12^1 $,   
$ 8^{10} 16^1 $,   
$ 8^{10} 20^1 $,   
$ 8^{14} 28^1 $,   
$ 8^{15} 4^1 $,   
$ 8^{15} 16^1 $,   
$ 8^{15} 24^1 $,   
$ 8^{15} 36^1 $,   
$ 8^{16} 20^1 $,   
$ 8^{17} 16^1 $,   
$ 8^{18} 32^1 $,   
$ 8^{20} 44^1 $,   
$ 8^{21} 20^1 $,   
$ 8^{21} 40^1 $,   
$ 8^{22} 16^1 $,   
$ 8^{22} 36^1 $,   
$ 8^{23} 32^1 $,   
$ 8^{24} 28^1 $,   
$ 8^{25} 4^1 $,   
$ 8^{25} 24^1 $,   
$ 8^{26} 20^1 $,   
$ 8^{27} 16^1 $,   
$ 8^{28} 12^1 $,   
$ 9^8 1^1 $,   
$ 9^{12} 13^1 $,   
$ 9^{16} 5^1 $,   
$ 9^{16} 25^1 $,   
$ 9^{20} 17^1 $,   
$ 9^{20} 29^1 $,   
$ 9^{20} 37^1 $,   
$ 9^{20} 49^1 $,   
$ 9^{28} 1^1 $,   
$ 10^{10} 18^1 $,   %
$ 10^{20} 38^1 $,   
$ 11^{20} 19^1 $,   
$ 12^5 8^1 $,   
$ 12^{10} 8^1 $,   
$ 12^{10} 16^1 $,   
$ 12^{10} 28^1 $,   
$ 12^{11} 20^1 $,   
$ 12^{12} 4^1 $,   
$ 12^{12} 24^1 $,   
$ 12^{13} 8^1 $,   
$ 12^{13} 28^1 $,   
$ 12^{14} 32^1 $,   
$ 12^{15} 8^1 $,   
$ 12^{15} 16^1 $,   
$ 12^{15} 28^1 $,   
$ 12^{15} 36^1 $,   
$ 12^{15} 48^1 $,   
$ 12^{16} 20^1 $,   
$ 12^{16} 40^1 $,   
$ 12^{17} 4^1 $,   
$ 12^{17} 24^1 $,   
$ 12^{17} 44^1 $,   
$ 12^{18} 8^1 $,   
$ 12^{18} 28^1 $,   
$ 12^{18} 48^1 $,   
$ 12^{19} 32^1 $,   
$ 12^{19} 52^1 $,   
$ 12^{20} 8^1 $,   
$ 12^{21} 20^1 $,   
$ 12^{21} 40^1 $,   
$ 12^{21} 60^1 $,   
$ 12^{22} 4^1 $,   
$ 12^{22} 24^1 $,   
$ 12^{22} 44^1 $,   
$ 12^{22} 64^1 $,   
$ 12^{23} 28^1 $,   
$ 12^{23} 48^1 $,   
$ 12^{23} 68^1 $,   
$ 12^{24} 32^1 $,   
$ 12^{24} 52^1 $,   
$ 12^{27} 4^1 $,   
$ 13^8 17^1 $,   
$ 13^{12} 1^1 $,   
$ 13^{12} 21^1 $,   
$ 13^{12} 41^1 $,   
$ 13^{16} 5^1 $,   
$ 13^{16} 25^1 $,   
$ 13^{20} 1^1 $,   
$ 14^8 6^1 $,   
$ 16^6 20^1 $,   
$ 16^7 12^1 $,   
$ 16^8 4^1 $,   
$ 16^9 36^1 $,   
$ 16^{10} 8^1 $,   
$ 16^{10} 20^1 $,   
$ 16^{10} 28^1 $,   
$ 16^{10} 36^1 $,   
$ 16^{10} 40^1 $,   
$ 16^{11} 20^1 $,   
$ 16^{11} 40^1 $,   
$ 16^{12} 12^1 $,   
$ 16^{12} 52^1 $,   
$ 16^{13} 4^1 $,   
$ 16^{13} 24^1 $,   
$ 16^{13} 44^1 $,   
$ 16^{14} 36^1 $,   
$ 16^{15} 8^1 $,   
$ 16^{15} 28^1 $,   
$ 16^{16} 20^1 $,   
$ 16^{16} 40^1 $,   
$ 16^{17} 12^1 $,   
$ 17^8 13^1 $,   
$ 17^8 33^1 $,   
$ 17^{12} 9^1 $,   
$ 17^{12} 29^1 $,   
$ 17^{12} 49^1 $,   
$ 17^{16} 5^1 $,   
$ 20^8 40^1 $,   
$ 20^9 40^1 $,   
$ 20^{10} 36^1 $,   %
$ 20^{10} 40^1 $,   
$ 20^{11} 40^1 $,   
$ 21^8 9^1 $,   
$ 21^8 29^1 $,   
$ 21^{12} 17^1 $,   
$ 23^8 7^1 $,   
$ 24^6 20^1 $,   
$ 24^7 8^1 $,   %
$ 24^7 28^1 $,   
$ 24^8 16^1 $,   
$ 24^8 36^1 $,   
$ 24^9 4^1 $,   
$ 24^9 44^1 $,   
$ 24^{10} 4^1 $,   
$ 24^{10} 12^1 $,   
$ 24^{10} 32^1 $,   
$ 24^{11} 20^1 $,   
$ 25^8 5^1 $,   
$ 25^8 45^1 $,   
$ 28^6 20^1 $,   
$ 28^6 40^1 $,   
$ 28^7 16^1 $,   
$ 28^7 36^1 $,   
$ 28^8 12^1 $,   
$ 28^8 32^1 $,   
$ 28^8 52^1 $,   
$ 28^9 8^1 $,   
$ 28^9 48^1 $,   
$ 28^{10} 4^1 $,   
$ 28^{10} 24^1 $,   
$ 29^8 1^1 $,   
$ 29^8 21^1 $,   
$ 32^6 20^1 $,   
$ 32^6 40^1 $,   
$ 32^7 4^1 $,   
$ 32^7 24^1 $,   
$ 32^7 44^1 $,   
$ 32^8 8^1 $,   
$ 32^8 28^1 $,   
$ 32^9 12^1 $,   
$ 36^6 20^1 $,   
$ 36^6 40^1 $,   
$ 36^7 12^1 $,   
$ 36^7 32^1 $,   
$ 36^8 24^1 $,   
$ 40^6 20^1 $,   
$ 44^6 20^1 $,   
$ 44^6 40^1 $,   
$ 44^7 8^1 $,   
$ 48^6 20^1 $,   
$ 1^{16} 9^5 $,   
$ 1^{46} 5^3 $ and    
$ 4^5 8^5 $.   


\end{theorem}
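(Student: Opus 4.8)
\section*{Proof proposal}

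The plan is to realise each design as the union of orbits of a short list of \emph{base blocks} under a cyclic automorphism group, so that conditions (i)--(iv) reduce to a purely arithmetic difference condition. For a type $g^u m^1$ I would take the point set to be $\mathbb{Z}_{gu} \cup \{\infty_0, \ldots, \infty_{m-1}\}$, take the $u$ groups of size $g$ to be the cosets of the subgroup $\langle u \rangle$ of order $g$ (equivalently, the residue classes modulo $u$), and take the final group of size $m$ to be $\{\infty_0, \ldots, \infty_{m-1}\}$. The automorphism is the translation $\tau\colon x \mapsto x+1$ on $\mathbb{Z}_{gu}$, with the infinite points either fixed or permuted in prescribed cycles. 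The three exceptional types $1^{16} 9^5$, $1^{46} 5^3$ and $4^5 8^5$ would be handled by the same idea after an appropriate choice of group and partition.

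First I would translate condition (iv) into a statement about differences. Developing a base block $B = \{b_1, \ldots, b_5\}$ under $\tau$ covers a pair of finite points $\{x,y\}$ precisely when $x - y$ lies in the multiset of differences $\{b_i - b_j : i \neq j\}$. Since no within-group pair may be covered and every cross-group pair must be covered exactly once, I need the base blocks, taken together, to realise each nonzero element of $\mathbb{Z}_{gu}$ that is \emph{not} a multiple of $u$ exactly once as such a difference, while no multiple of $u$ occurs at all. A pair through an infinite point is tracked in parallel: a block $\{\infty_s, c_1, c_2, c_3, c_4\}$ whose finite part is a coset of a subgroup $H \leq \mathbb{Z}_{gu}$ of order $4$ is fixed by $H$, so its orbit has length $[\mathbb{Z}_{gu} : H]$ and pairs $\infty_s$ with each finite point exactly once.

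Then I would exhibit, type by type, an explicit list of base blocks meeting these requirements, split into the full-length orbits that supply the finite-finite coverage and the short orbits that supply the pairs through the infinite points. With the list in hand, verification is entirely mechanical: one tallies the finite differences and checks that every nonzero non-multiple of $u$ occurs once while no multiple of $u$ occurs, and one checks that each infinite point is paired with every finite point exactly once. This difference tally serves as an independently checkable certificate of correctness for each of the $233$ designs.

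The hard part is producing the base blocks in the first place. There is no closed-form recipe; for each type the search for a valid block list is a constrained combinatorial problem over the chosen group, solved in practice by (computer-assisted) search, and the fine tuning of short-orbit blocks so that the infinite points and the within-block residues cooperate is delicate. Because the $233$ parameter sets differ substantially, no single uniform family resolves them all, so the real content of the theorem resides in the explicit tables of base blocks, with the arithmetic of differences doing the verifying.
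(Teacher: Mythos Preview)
Your framework is exactly the one the paper uses: a cyclic (or near-cyclic) group acts on the finite points, the $u$ groups of size $g$ are residue classes, the extra group consists of ``infinite'' points permuted in short cycles, and base blocks are developed so that the nonzero non-multiples of $u$ are covered once each as differences while short-orbit blocks handle the pairs through the infinite points. The paper's proof is nothing more and nothing less than the explicit realisation of this scheme, one construction per type.

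The gap in your proposal is therefore not conceptual but substantive: this theorem is a bare existence claim for $233$ specific parameter sets, and its only known proof is by exhibiting the base blocks. You describe the method accurately and even concede that ``the real content of the theorem resides in the explicit tables of base blocks,'' but you supply none of them. Without those tables (or an alternative existence argument, which is not available here), nothing has been proved. A secondary point: your sketch slightly understates the variety needed in practice. The paper frequently develops by a step larger than~$1$ (so the acting group has index~$2$ or more in $\mathbb{Z}_{gu}$), splits the infinite points into several cycles of different lengths, and uses short orbits whose finite part is not always a single coset of a fixed subgroup of order~$4$; getting the orbit lengths, the cycle structure on the infinite points, and the difference tallies to mesh is exactly the ``fine tuning'' you allude to, and it has to be carried out separately for each of the $233$ types.
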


\begin{proof}
%
%

\adfhide{
$ 1^{48} 9^1 $,   
$ 1^{60} 9^1 $,   
$ 1^{60} 13^1 $,   
$ 1^{68} 9^1 $,   
$ 1^{72} 17^1 $,   
$ 1^{80} 9^1 $,   
$ 1^{80} 13^1 $,   
$ 1^{80} 25^1 $,   
$ 1^{84} 21^1 $,   
$ 1^{88} 9^1 $,   
$ 1^{92} 17^1 $,   
$ 1^{96} 25^1 $,   
$ 1^{100} 9^1 $,   
$ 1^{100} 13^1 $,   
$ 1^{100} 25^1 $,   
$ 1^{104} 21^1 $,   
$ 1^{108} 9^1 $,   
$ 1^{108} 29^1 $,   
$ 1^{112} 17^1 $,   
$ 1^{124} 21^1 $,   
$ 1^{128} 9^1 $,   
$ 1^{128} 29^1 $,   
$ 1^{132} 17^1 $,   
$ 1^{132} 37^1 $,   
$ 1^{136} 25^1 $,   
$ 1^{144} 21^1 $,   
$ 1^{144} 41^1 $,   
$ 1^{148} 29^1 $,   
$ 1^{152} 17^1 $,   
$ 1^{152} 37^1 $,   
$ 1^{156} 25^1 $,   
$ 1^{156} 45^1 $,   
$ 1^{160} 33^1 $,   
$ 1^{164} 21^1 $,   
$ 1^{164} 41^1 $,   
$ 1^{168} 29^1 $,   
$ 1^{168} 49^1 $,   
$ 1^{172} 17^1 $,   
$ 1^{176} 25^1 $,   
$ 1^{176} 45^1 $,   
$ 1^{184} 21^1 $,   
$ 1^{184} 41^1 $,   
$ 1^{188} 29^1 $,   
$ 1^{192} 17^1 $,   
$ 1^{192} 37^1 $,   
$ 1^{192} 57^1 $,   
$ 1^{196} 25^1 $,   
$ 1^{196} 45^1 $,   
$ 2^{32} 14^1 $,   
$ 2^{40} 6^1 $,   %
$ 2^{48} 18^1 $,   
$ 2^{52} 14^1 $,   
$ 2^{56} 10^1 $,   
$ 2^{60} 6^1 $,   
$ 3^{20} 11^1 $,   
$ 3^{28} 7^1 $,   
$ 3^{32} 11^1 $,   
$ 3^{36} 15^1 $,   
$ 5^{24} 25^1 $,   
$ 5^{28} 25^1 $,   
$ 5^{32} 25^1 $,   
$ 5^{32} 45^1 $,   
$ 5^{36} 45^1 $,   
$ 5^{40} 45^1 $,   
$ 5^{44} 25^1 $,   
$ 5^{44} 45^1 $,   
$ 6^{16} 10^1 $,   
$ 8^8 12^1 $,   
$ 8^{10} 16^1 $,   
$ 8^{10} 20^1 $,   
$ 8^{14} 28^1 $,   
$ 8^{15} 4^1 $,   
$ 8^{15} 16^1 $,   
$ 8^{15} 24^1 $,   
$ 8^{15} 36^1 $,   
$ 8^{16} 20^1 $,   
$ 8^{17} 16^1 $,   
$ 8^{18} 32^1 $,   
$ 8^{20} 44^1 $,   
$ 8^{21} 20^1 $,   
$ 8^{21} 40^1 $,   
$ 8^{22} 16^1 $,   
$ 8^{22} 36^1 $,   
$ 8^{23} 32^1 $,   
$ 8^{24} 28^1 $,   
$ 8^{25} 4^1 $,   
$ 8^{25} 24^1 $,   
$ 8^{26} 20^1 $,   
$ 8^{27} 16^1 $,   
$ 8^{28} 12^1 $,   
$ 9^8 1^1 $,   
$ 9^{12} 13^1 $,   
$ 9^{16} 5^1 $,   
$ 9^{16} 25^1 $,   
$ 9^{20} 17^1 $,   
$ 9^{20} 29^1 $,   
$ 9^{20} 37^1 $,   
$ 9^{20} 49^1 $,   
$ 9^{28} 1^1 $,   
$ 10^{10} 18^1 $,   %
$ 10^{20} 38^1 $,   
$ 11^{20} 19^1 $,   
$ 12^5 8^1 $,   
$ 12^{10} 8^1 $,   
$ 12^{10} 16^1 $,   
$ 12^{10} 28^1 $,   
$ 12^{11} 20^1 $,   
$ 12^{12} 4^1 $,   
$ 12^{12} 24^1 $,   
$ 12^{13} 8^1 $,   
$ 12^{13} 28^1 $,   
$ 12^{14} 32^1 $,   
$ 12^{15} 8^1 $,   
$ 12^{15} 16^1 $,   
$ 12^{15} 28^1 $,   
$ 12^{15} 36^1 $,   
$ 12^{15} 48^1 $,   
$ 12^{16} 20^1 $,   
$ 12^{16} 40^1 $,   
$ 12^{17} 4^1 $,   
$ 12^{17} 24^1 $,   
$ 12^{17} 44^1 $,   
$ 12^{18} 8^1 $,   
$ 12^{18} 28^1 $,   
$ 12^{18} 48^1 $,   
$ 12^{19} 32^1 $,   
$ 12^{19} 52^1 $,   
$ 12^{20} 8^1 $,   
$ 12^{21} 20^1 $,   
$ 12^{21} 40^1 $,   
$ 12^{21} 60^1 $,   
$ 12^{22} 4^1 $,   
$ 12^{22} 24^1 $,   
$ 12^{22} 44^1 $,   
$ 12^{22} 64^1 $,   
$ 12^{23} 28^1 $,   
$ 12^{23} 48^1 $,   
$ 12^{23} 68^1 $,   
$ 12^{24} 32^1 $,   
$ 12^{24} 52^1 $,   
$ 12^{27} 4^1 $,   
$ 13^8 17^1 $,   
$ 13^{12} 1^1 $,   
$ 13^{12} 21^1 $,   
$ 13^{12} 41^1 $,   
$ 13^{16} 5^1 $,   
$ 13^{16} 25^1 $,   
$ 13^{20} 1^1 $,   
$ 14^8 6^1 $,   
$ 16^6 20^1 $,   
$ 16^7 12^1 $,   
$ 16^8 4^1 $,   
$ 16^9 36^1 $,   
$ 16^{10} 8^1 $,   
$ 16^{10} 20^1 $,   
$ 16^{10} 28^1 $,   
$ 16^{10} 36^1 $,   
$ 16^{10} 40^1 $,   
$ 16^{11} 20^1 $,   
$ 16^{11} 40^1 $,   
$ 16^{12} 12^1 $,   
$ 16^{12} 52^1 $,   
$ 16^{13} 4^1 $,   
$ 16^{13} 24^1 $,   
$ 16^{13} 44^1 $,   
$ 16^{14} 36^1 $,   
$ 16^{15} 8^1 $,   
$ 16^{15} 28^1 $,   
$ 16^{16} 20^1 $,   
$ 16^{16} 40^1 $,   
$ 16^{17} 12^1 $,   
$ 17^8 13^1 $,   
$ 17^8 33^1 $,   
$ 17^{12} 9^1 $,   
$ 17^{12} 29^1 $,   
$ 17^{12} 49^1 $,   
$ 17^{16} 5^1 $,   
$ 20^8 40^1 $,   
$ 20^9 40^1 $,   
$ 20^{10} 36^1 $,   %
$ 20^{10} 40^1 $,   
$ 20^{11} 40^1 $,   
$ 21^8 9^1 $,   
$ 21^8 29^1 $,   
$ 21^{12} 17^1 $,   
$ 23^8 7^1 $,   
$ 24^6 20^1 $,   
$ 24^7 8^1 $,   %
$ 24^7 28^1 $,   
$ 24^8 16^1 $,   
$ 24^8 36^1 $,   
$ 24^9 4^1 $,   
$ 24^9 44^1 $,   
$ 24^{10} 4^1 $,   
$ 24^{10} 12^1 $,   
$ 24^{10} 32^1 $,   
$ 24^{11} 20^1 $,   
$ 25^8 5^1 $,   
$ 25^8 45^1 $,   
$ 28^6 20^1 $,   
$ 28^6 40^1 $,   
$ 28^7 16^1 $,   
$ 28^7 36^1 $,   
$ 28^8 12^1 $,   
$ 28^8 32^1 $,   
$ 28^8 52^1 $,   
$ 28^9 8^1 $,   
$ 28^9 48^1 $,   
$ 28^{10} 4^1 $,   
$ 28^{10} 24^1 $,   
$ 29^8 1^1 $,   
$ 29^8 21^1 $,   
$ 32^6 20^1 $,   
$ 32^6 40^1 $,   
$ 32^7 4^1 $,   
$ 32^7 24^1 $,   
$ 32^7 44^1 $,   
$ 32^8 8^1 $,   
$ 32^8 28^1 $,   
$ 32^9 12^1 $,   
$ 36^6 20^1 $,   
$ 36^6 40^1 $,   
$ 36^7 12^1 $,   
$ 36^7 32^1 $,   
$ 36^8 24^1 $,   
$ 40^6 20^1 $,   
$ 44^6 20^1 $,   
$ 44^6 40^1 $,   
$ 44^7 8^1 $,   
$ 48^6 20^1 $,   
$ 1^{16} 9^5 $,   
$ 1^{46} 5^3 $ and    
$ 4^5 8^5 $.   
}

\adfDgap
\noindent{\boldmath $ 1^{48} 9^{1} $}~
With the point set $\{0, 1, \dots, 56\}$ partitioned into
 residue classes modulo $48$ for $\{0, 1, \dots, 47\}$, and
 $\{48, 49, \dots, 56\}$,
 the design is generated from

\adfLgap {\adfBfont
$\{0, 1, 3, 11, 32\}$,
$\{0, 4, 18, 43, 52\}$,
$\{0, 6, 13, 28, 51\}$,\adfsplit
$\{0, 12, 24, 36, 56\}$

}
\adfLgap \noindent by the mapping:
$x \mapsto x +  j \adfmod{48}$ for $x < 48$,
$x \mapsto (x +  j \adfmod{8}) + 48$ for $48 \le x < 56$,
$56 \mapsto 56$,
$0 \le j < 48$
 for the first three blocks,
$0 \le j < 12$
 for the last block.
\ADFvfyParStart{(57, ((3, 48, ((48, 1), (8, 1), (1, 1))), (1, 12, ((48, 1), (8, 1), (1, 1)))), ((1, 48), (9, 1)))} 

\adfDgap
\noindent{\boldmath $ 1^{60} 9^{1} $}~
With the point set $\{0, 1, \dots, 68\}$ partitioned into
 residue classes modulo $60$ for $\{0, 1, \dots, 59\}$, and
 $\{60, 61, \dots, 68\}$,
 the design is generated from

\adfLgap {\adfBfont
$\{0, 1, 7, 58, 66\}$,
$\{0, 5, 39, 41, 44\}$,
$\{0, 8, 31, 43, 59\}$,\adfsplit
$\{0, 4, 17, 37, 50\}$,
$\{0, 11, 22, 42, 60\}$,
$\{0, 6, 25, 32, 62\}$,\adfsplit
$\{1, 5, 15, 23, 65\}$,
$\{0, 15, 30, 45, 68\}$,
$\{0, 12, 24, 36, 48\}$

}
\adfLgap \noindent by the mapping:
$x \mapsto x + 2 j \adfmod{60}$ for $x < 60$,
$x \mapsto (x +  j \adfmod{6}) + 60$ for $60 \le x < 66$,
$x \mapsto (x +  j \adfmod{2}) + 66$ for $66 \le x < 68$,
$68 \mapsto 68$,
$0 \le j < 30$
 for the first seven blocks,
$0 \le j < 15$
 for the next block,
$0 \le j < 6$
 for the last block.
\ADFvfyParStart{(69, ((7, 30, ((60, 2), (6, 1), (2, 1), (1, 1))), (1, 15, ((60, 2), (6, 1), (2, 1), (1, 1))), (1, 6, ((60, 2), (6, 1), (2, 1), (1, 1)))), ((1, 60), (9, 1)))} 

\adfDgap
\noindent{\boldmath $ 1^{60} 13^{1} $}~
With the point set $\{0, 1, \dots, 72\}$ partitioned into
 residue classes modulo $60$ for $\{0, 1, \dots, 59\}$, and
 $\{60, 61, \dots, 72\}$,
 the design is generated from

\adfLgap {\adfBfont
$\{0, 2, 34, 59, 61\}$,
$\{0, 10, 21, 23, 67\}$,
$\{0, 1, 35, 41, 65\}$,\adfsplit
$\{0, 3, 12, 19, 55\}$,
$\{0, 5, 16, 36, 53\}$,
$\{0, 6, 39, 52, 66\}$,\adfsplit
$\{0, 4, 22, 31, 68\}$,
$\{1, 5, 15, 43, 60\}$,
$\{0, 15, 30, 45, 72\}$

}
\adfLgap \noindent by the mapping:
$x \mapsto x + 2 j \adfmod{60}$ for $x < 60$,
$x \mapsto (x + 2 j \adfmod{12}) + 60$ for $60 \le x < 72$,
$72 \mapsto 72$,
$0 \le j < 30$
 for the first eight blocks,
$0 \le j < 15$
 for the last block.
\ADFvfyParStart{(73, ((8, 30, ((60, 2), (12, 2), (1, 1))), (1, 15, ((60, 2), (12, 2), (1, 1)))), ((1, 60), (13, 1)))} 

\adfDgap
\noindent{\boldmath $ 1^{68} 9^{1} $}~
With the point set $\{0, 1, \dots, 76\}$ partitioned into
 residue classes modulo $68$ for $\{0, 1, \dots, 67\}$, and
 $\{68, 69, \dots, 76\}$,
 the design is generated from

\adfLgap {\adfBfont
$\{0, 1, 7, 66, 69\}$,
$\{0, 5, 15, 62, 71\}$,
$\{0, 10, 41, 67, 70\}$,\adfsplit
$\{0, 19, 33, 38, 68\}$,
$\{0, 4, 12, 26, 44\}$,
$\{0, 20, 43, 55, 59\}$,\adfsplit
$\{0, 25, 27, 47, 65\}$,
$\{0, 13, 37, 45, 52\}$,
$\{0, 17, 34, 51, 76\}$

}
\adfLgap \noindent by the mapping:
$x \mapsto x + 2 j \adfmod{68}$ for $x < 68$,
$x \mapsto (x - 68 + 4 j \adfmod{8}) + 68$ for $68 \le x < 76$,
$76 \mapsto 76$,
$0 \le j < 34$
 for the first eight blocks,
$0 \le j < 17$
 for the last block.
\ADFvfyParStart{(77, ((8, 34, ((68, 2), (8, 4), (1, 1))), (1, 17, ((68, 2), (8, 4), (1, 1)))), ((1, 68), (9, 1)))} 

\adfDgap
\noindent{\boldmath $ 1^{72} 17^{1} $}~
With the point set $\{0, 1, \dots, 88\}$ partitioned into
 residue classes modulo $72$ for $\{0, 1, \dots, 71\}$, and
 $\{72, 73, \dots, 88\}$,
 the design is generated from

\adfLgap {\adfBfont
$\{0, 1, 3, 66, 84\}$,
$\{0, 5, 17, 27, 51\}$,
$\{0, 8, 39, 52, 76\}$,\adfsplit
$\{0, 14, 30, 49, 83\}$,
$\{0, 4, 15, 47, 78\}$,
$\{0, 18, 36, 54, 88\}$

}
\adfLgap \noindent by the mapping:
$x \mapsto x +  j \adfmod{72}$ for $x < 72$,
$x \mapsto (x +  j \adfmod{12}) + 72$ for $72 \le x < 84$,
$x \mapsto (x +  j \adfmod{4}) + 84$ for $84 \le x < 88$,
$88 \mapsto 88$,
$0 \le j < 72$
 for the first five blocks,
$0 \le j < 18$
 for the last block.
\ADFvfyParStart{(89, ((5, 72, ((72, 1), (12, 1), (4, 1), (1, 1))), (1, 18, ((72, 1), (12, 1), (4, 1), (1, 1)))), ((1, 72), (17, 1)))} 

\adfDgap
\noindent{\boldmath $ 1^{80} 9^{1} $}~
With the point set $\{0, 1, \dots, 88\}$ partitioned into
 residue classes modulo $80$ for $\{0, 1, \dots, 79\}$, and
 $\{80, 81, \dots, 88\}$,
 the design is generated from

\adfLgap {\adfBfont
$\{0, 47, 51, 59, 80\}$,
$\{0, 1, 74, 78, 82\}$,
$\{0, 5, 19, 70, 87\}$,\adfsplit
$\{0, 21, 45, 58, 86\}$,
$\{0, 8, 34, 65, 71\}$,
$\{0, 9, 35, 62, 79\}$,\adfsplit
$\{0, 12, 25, 36, 50\}$,
$\{0, 11, 33, 49, 52\}$,
$\{0, 23, 41, 73, 75\}$,\adfsplit
$\{0, 20, 40, 60, 88\}$,
$\{1, 21, 41, 61, 88\}$,
$\{0, 16, 32, 48, 64\}$

}
\adfLgap \noindent by the mapping:
$x \mapsto x + 2 j \adfmod{80}$ for $x < 80$,
$x \mapsto (x +  j \adfmod{8}) + 80$ for $80 \le x < 88$,
$88 \mapsto 88$,
$0 \le j < 40$
 for the first nine blocks,
$0 \le j < 10$
 for the next two blocks,
$0 \le j < 8$
 for the last block.
\ADFvfyParStart{(89, ((9, 40, ((80, 2), (8, 1), (1, 1))), (2, 10, ((80, 2), (8, 1), (1, 1))), (1, 8, ((80, 2), (8, 1), (1, 1)))), ((1, 80), (9, 1)))} 

\adfDgap
\noindent{\boldmath $ 1^{80} 13^{1} $}~
With the point set $\{0, 1, \dots, 92\}$ partitioned into
 residue classes modulo $80$ for $\{0, 1, \dots, 79\}$, and
 $\{80, 81, \dots, 92\}$,
 the design is generated from

\adfLgap {\adfBfont
$\{0, 1, 3, 74, 88\}$,
$\{0, 4, 21, 47, 52\}$,
$\{0, 8, 22, 35, 64\}$,\adfsplit
$\{0, 10, 46, 65, 82\}$,
$\{0, 11, 23, 41, 86\}$,
$\{0, 20, 40, 60, 92\}$

}
\adfLgap \noindent by the mapping:
$x \mapsto x +  j \adfmod{80}$ for $x < 80$,
$x \mapsto (x +  j \adfmod{8}) + 80$ for $80 \le x < 88$,
$x \mapsto (x +  j \adfmod{4}) + 88$ for $88 \le x < 92$,
$92 \mapsto 92$,
$0 \le j < 80$
 for the first five blocks,
$0 \le j < 20$
 for the last block.
\ADFvfyParStart{(93, ((5, 80, ((80, 1), (8, 1), (4, 1), (1, 1))), (1, 20, ((80, 1), (8, 1), (4, 1), (1, 1)))), ((1, 80), (13, 1)))} 

\adfDgap
\noindent{\boldmath $ 1^{80} 25^{1} $}~
With the point set $\{0, 1, \dots, 104\}$ partitioned into
 residue classes modulo $80$ for $\{0, 1, \dots, 79\}$, and
 $\{80, 81, \dots, 104\}$,
 the design is generated from

\adfLgap {\adfBfont
$\{0, 5, 38, 49, 98\}$,
$\{0, 1, 4, 23, 102\}$,
$\{0, 6, 52, 73, 96\}$,\adfsplit
$\{0, 9, 26, 50, 91\}$,
$\{0, 2, 14, 29, 88\}$,
$\{0, 8, 18, 43, 95\}$,\adfsplit
$\{0, 20, 40, 60, 100\}$,
$\{0, 16, 32, 48, 64\}$

}
\adfLgap \noindent by the mapping:
$x \mapsto x +  j \adfmod{80}$ for $x < 80$,
$x \mapsto (x +  j \adfmod{20}) + 80$ for $80 \le x < 100$,
$x \mapsto (x +  j \adfmod{5}) + 100$ for $x \ge 100$,
$0 \le j < 80$
 for the first six blocks,
$0 \le j < 20$
 for the next block,
$0 \le j < 16$
 for the last block.
\ADFvfyParStart{(105, ((6, 80, ((80, 1), (20, 1), (5, 1))), (1, 20, ((80, 1), (20, 1), (5, 1))), (1, 16, ((80, 1), (20, 1), (5, 1)))), ((1, 80), (25, 1)))} 

\adfDgap
\noindent{\boldmath $ 1^{84} 21^{1} $}~
With the point set $\{0, 1, \dots, 104\}$ partitioned into
 residue classes modulo $84$ for $\{0, 1, \dots, 83\}$, and
 $\{84, 85, \dots, 104\}$,
 the design is generated from

\adfLgap {\adfBfont
$\{0, 1, 4, 6, 91\}$,
$\{0, 7, 15, 29, 96\}$,
$\{0, 9, 28, 40, 60\}$,\adfsplit
$\{0, 11, 37, 54, 94\}$,
$\{0, 10, 35, 48, 86\}$,
$\{0, 16, 39, 66, 95\}$,\adfsplit
$\{0, 21, 42, 63, 84\}$

}
\adfLgap \noindent by the mapping:
$x \mapsto x +  j \adfmod{84}$ for $x < 84$,
$x \mapsto (x +  j \adfmod{21}) + 84$ for $x \ge 84$,
$0 \le j < 84$
 for the first six blocks,
$0 \le j < 21$
 for the last block.
\ADFvfyParStart{(105, ((6, 84, ((84, 1), (21, 1))), (1, 21, ((84, 1), (21, 1)))), ((1, 84), (21, 1)))} 

\adfDgap
\noindent{\boldmath $ 1^{88} 9^{1} $}~
With the point set $\{0, 1, \dots, 96\}$ partitioned into
 residue classes modulo $88$ for $\{0, 1, \dots, 87\}$, and
 $\{88, 89, \dots, 96\}$,
 the design is generated from

\adfLgap {\adfBfont
$\{0, 1, 3, 11, 15\}$,
$\{0, 7, 23, 42, 63\}$,
$\{0, 9, 27, 47, 64\}$,\adfsplit
$\{0, 5, 31, 59, 88\}$,
$\{0, 6, 45, 58, 92\}$,
$\{0, 22, 44, 66, 96\}$

}
\adfLgap \noindent by the mapping:
$x \mapsto x +  j \adfmod{88}$ for $x < 88$,
$x \mapsto (x +  j \adfmod{8}) + 88$ for $88 \le x < 96$,
$96 \mapsto 96$,
$0 \le j < 88$
 for the first five blocks,
$0 \le j < 22$
 for the last block.
\ADFvfyParStart{(97, ((5, 88, ((88, 1), (8, 1), (1, 1))), (1, 22, ((88, 1), (8, 1), (1, 1)))), ((1, 88), (9, 1)))} 

\adfDgap
\noindent{\boldmath $ 1^{92} 17^{1} $}~
With the point set $\{0, 1, \dots, 108\}$ partitioned into
 residue classes modulo $92$ for $\{0, 1, \dots, 91\}$, and
 $\{92, 93, \dots, 108\}$,
 the design is generated from

\adfLgap {\adfBfont
$\{65, 107, 64, 78, 59\}$,
$\{102, 21, 10, 51, 48\}$,
$\{51, 44, 104, 53, 54\}$,\adfsplit
$\{71, 97, 29, 0, 86\}$,
$\{0, 2, 15, 49, 95\}$,
$\{0, 17, 34, 55, 98\}$,\adfsplit
$\{0, 18, 51, 61, 92\}$,
$\{0, 25, 39, 50, 93\}$,
$\{0, 5, 31, 40, 70\}$,\adfsplit
$\{0, 4, 16, 64, 72\}$,
$\{0, 19, 37, 59, 66\}$,
$\{1, 5, 17, 65, 73\}$,\adfsplit
$\{0, 23, 46, 69, 108\}$

}
\adfLgap \noindent by the mapping:
$x \mapsto x + 2 j \adfmod{92}$ for $x < 92$,
$x \mapsto (x - 92 + 8 j \adfmod{16}) + 92$ for $92 \le x < 108$,
$108 \mapsto 108$,
$0 \le j < 46$
 for the first 12 blocks,
$0 \le j < 23$
 for the last block.
\ADFvfyParStart{(109, ((12, 46, ((92, 2), (16, 8), (1, 1))), (1, 23, ((92, 2), (16, 8), (1, 1)))), ((1, 92), (17, 1)))} 

\adfDgap
\noindent{\boldmath $ 1^{96} 25^{1} $}~
With the point set $\{0, 1, \dots, 120\}$ partitioned into
 residue classes modulo $96$ for $\{0, 1, \dots, 95\}$, and
 $\{96, 97, \dots, 120\}$,
 the design is generated from

\adfLgap {\adfBfont
$\{44, 80, 52, 55, 98\}$,
$\{100, 72, 73, 11, 15\}$,
$\{0, 2, 12, 53, 108\}$,\adfsplit
$\{0, 6, 15, 80, 101\}$,
$\{0, 7, 21, 40, 104\}$,
$\{0, 13, 30, 67, 111\}$,\adfsplit
$\{0, 5, 23, 49, 69\}$,
$\{0, 24, 48, 72, 120\}$

}
\adfLgap \noindent by the mapping:
$x \mapsto x +  j \adfmod{96}$ for $x < 96$,
$x \mapsto (x +  j \adfmod{24}) + 96$ for $96 \le x < 120$,
$120 \mapsto 120$,
$0 \le j < 96$
 for the first seven blocks,
$0 \le j < 24$
 for the last block.
\ADFvfyParStart{(121, ((7, 96, ((96, 1), (24, 1), (1, 1))), (1, 24, ((96, 1), (24, 1), (1, 1)))), ((1, 96), (25, 1)))} 

\adfDgap
\noindent{\boldmath $ 1^{100} 9^{1} $}~
With the point set $\{0, 1, \dots, 108\}$ partitioned into
 residue classes modulo $100$ for $\{0, 1, \dots, 99\}$, and
 $\{100, 101, \dots, 108\}$,
 the design is generated from

\adfLgap {\adfBfont
$\{53, 101, 75, 76, 18\}$,
$\{30, 96, 11, 92, 43\}$,
$\{75, 64, 103, 38, 29\}$,\adfsplit
$\{0, 1, 7, 98, 100\}$,
$\{0, 23, 70, 97, 102\}$,
$\{0, 16, 55, 68, 85\}$,\adfsplit
$\{0, 5, 33, 44, 93\}$,
$\{0, 6, 14, 24, 78\}$,
$\{0, 12, 43, 79, 95\}$,\adfsplit
$\{0, 29, 63, 71, 73\}$,
$\{0, 21, 41, 45, 59\}$,
$\{0, 25, 50, 75, 108\}$,\adfsplit
$\{0, 20, 40, 60, 80\}$

}
\adfLgap \noindent by the mapping:
$x \mapsto x + 2 j \adfmod{100}$ for $x < 100$,
$x \mapsto (x - 100 + 4 j \adfmod{8}) + 100$ for $100 \le x < 108$,
$108 \mapsto 108$,
$0 \le j < 50$
 for the first 11 blocks,
$0 \le j < 25$
 for the next block,
$0 \le j < 10$
 for the last block.
\ADFvfyParStart{(109, ((11, 50, ((100, 2), (8, 4), (1, 1))), (1, 25, ((100, 2), (8, 4), (1, 1))), (1, 10, ((100, 2), (8, 4), (1, 1)))), ((1, 100), (9, 1)))} 

\adfDgap
\noindent{\boldmath $ 1^{100} 13^{1} $}~
With the point set $\{0, 1, \dots, 112\}$ partitioned into
 residue classes modulo $100$ for $\{0, 1, \dots, 99\}$, and
 $\{100, 101, \dots, 112\}$,
 the design is generated from

\adfLgap {\adfBfont
$\{107, 32, 94, 3, 9\}$,
$\{5, 105, 91, 32, 10\}$,
$\{0, 1, 43, 46, 100\}$,\adfsplit
$\{0, 2, 10, 26, 66\}$,
$\{0, 4, 32, 53, 65\}$,
$\{0, 7, 18, 70, 87\}$,\adfsplit
$\{0, 25, 50, 75, 112\}$

}
\adfLgap \noindent by the mapping:
$x \mapsto x +  j \adfmod{100}$ for $x < 100$,
$x \mapsto (x - 100 + 3 j \adfmod{12}) + 100$ for $100 \le x < 112$,
$112 \mapsto 112$,
$0 \le j < 100$
 for the first six blocks,
$0 \le j < 25$
 for the last block.
\ADFvfyParStart{(113, ((6, 100, ((100, 1), (12, 3), (1, 1))), (1, 25, ((100, 1), (12, 3), (1, 1)))), ((1, 100), (13, 1)))} 

\adfDgap
\noindent{\boldmath $ 1^{100} 25^{1} $}~
With the point set $\{0, 1, \dots, 124\}$ partitioned into
 residue classes modulo $100$ for $\{0, 1, \dots, 99\}$, and
 $\{100, 101, \dots, 124\}$,
 the design is generated from

\adfLgap {\adfBfont
$\{104, 90, 92, 3, 36\}$,
$\{49, 71, 13, 97, 103\}$,
$\{0, 1, 6, 83, 111\}$,\adfsplit
$\{0, 9, 43, 90, 117\}$,
$\{0, 7, 15, 45, 86\}$,
$\{0, 12, 39, 63, 109\}$,\adfsplit
$\{0, 3, 68, 72, 116\}$,
$\{0, 25, 50, 75, 100\}$,
$\{0, 20, 40, 60, 80\}$

}
\adfLgap \noindent by the mapping:
$x \mapsto x +  j \adfmod{100}$ for $x < 100$,
$x \mapsto (x +  j \adfmod{25}) + 100$ for $x \ge 100$,
$0 \le j < 100$
 for the first seven blocks,
$0 \le j < 25$
 for the next block,
$0 \le j < 20$
 for the last block.
\ADFvfyParStart{(125, ((7, 100, ((100, 1), (25, 1))), (1, 25, ((100, 1), (25, 1))), (1, 20, ((100, 1), (25, 1)))), ((1, 100), (25, 1)))} 

\adfDgap
\noindent{\boldmath $ 1^{104} 21^{1} $}~
With the point set $\{0, 1, \dots, 124\}$ partitioned into
 residue classes modulo $104$ for $\{0, 1, \dots, 103\}$, and
 $\{104, 105, \dots, 124\}$,
 the design is generated from

\adfLgap {\adfBfont
$\{22, 57, 113, 43, 56\}$,
$\{56, 121, 102, 71, 93\}$,
$\{0, 2, 5, 43, 105\}$,\adfsplit
$\{0, 7, 25, 54, 107\}$,
$\{0, 11, 30, 53, 104\}$,
$\{0, 6, 16, 33, 65\}$,\adfsplit
$\{0, 4, 28, 40, 48\}$,
$\{0, 26, 52, 78, 124\}$

}
\adfLgap \noindent by the mapping:
$x \mapsto x +  j \adfmod{104}$ for $x < 104$,
$x \mapsto (x - 104 + 5 j \adfmod{20}) + 104$ for $104 \le x < 124$,
$124 \mapsto 124$,
$0 \le j < 104$
 for the first seven blocks,
$0 \le j < 26$
 for the last block.
\ADFvfyParStart{(125, ((7, 104, ((104, 1), (20, 5), (1, 1))), (1, 26, ((104, 1), (20, 5), (1, 1)))), ((1, 104), (21, 1)))} 

\adfDgap
\noindent{\boldmath $ 1^{108} 9^{1} $}~
With the point set $\{0, 1, \dots, 116\}$ partitioned into
 residue classes modulo $108$ for $\{0, 1, \dots, 107\}$, and
 $\{108, 109, \dots, 116\}$,
 the design is generated from

\adfLgap {\adfBfont
$\{83, 81, 102, 40, 111\}$,
$\{69, 60, 98, 112, 47\}$,
$\{0, 1, 5, 31, 49\}$,\adfsplit
$\{0, 8, 28, 42, 53\}$,
$\{0, 3, 15, 39, 76\}$,
$\{0, 6, 16, 23, 56\}$,\adfsplit
$\{0, 27, 54, 81, 116\}$

}
\adfLgap \noindent by the mapping:
$x \mapsto x +  j \adfmod{108}$ for $x < 108$,
$x \mapsto (x - 108 + 2 j \adfmod{8}) + 108$ for $108 \le x < 116$,
$116 \mapsto 116$,
$0 \le j < 108$
 for the first six blocks,
$0 \le j < 27$
 for the last block.
\ADFvfyParStart{(117, ((6, 108, ((108, 1), (8, 2), (1, 1))), (1, 27, ((108, 1), (8, 2), (1, 1)))), ((1, 108), (9, 1)))} 

\adfDgap
\noindent{\boldmath $ 1^{108} 29^{1} $}~
With the point set $\{0, 1, \dots, 136\}$ partitioned into
 residue classes modulo $108$ for $\{0, 1, \dots, 107\}$, and
 $\{108, 109, \dots, 136\}$,
 the design is generated from

\adfLgap {\adfBfont
$\{37, 103, 12, 124, 30\}$,
$\{98, 55, 126, 104, 53\}$,
$\{76, 127, 71, 102, 105\}$,\adfsplit
$\{0, 9, 19, 30, 121\}$,
$\{0, 4, 62, 75, 112\}$,
$\{0, 8, 20, 48, 84\}$,\adfsplit
$\{0, 1, 15, 56, 114\}$,
$\{0, 16, 39, 86, 119\}$,
$\{0, 27, 54, 81, 136\}$

}
\adfLgap \noindent by the mapping:
$x \mapsto x +  j \adfmod{108}$ for $x < 108$,
$x \mapsto (x +  j \adfmod{12}) + 108$ for $108 \le x < 120$,
$x \mapsto (x - 120 + 4 j \adfmod{16}) + 120$ for $120 \le x < 136$,
$136 \mapsto 136$,
$0 \le j < 108$
 for the first eight blocks,
$0 \le j < 27$
 for the last block.
\ADFvfyParStart{(137, ((8, 108, ((108, 1), (12, 1), (16, 4), (1, 1))), (1, 27, ((108, 1), (12, 1), (16, 4), (1, 1)))), ((1, 108), (29, 1)))} 

\adfDgap
\noindent{\boldmath $ 1^{112} 17^{1} $}~
With the point set $\{0, 1, \dots, 128\}$ partitioned into
 residue classes modulo $112$ for $\{0, 1, \dots, 111\}$, and
 $\{112, 113, \dots, 128\}$,
 the design is generated from

\adfLgap {\adfBfont
$\{104, 2, 96, 103, 5\}$,
$\{86, 15, 27, 10, 113\}$,
$\{0, 2, 32, 48, 70\}$,\adfsplit
$\{0, 4, 19, 69, 115\}$,
$\{0, 6, 45, 85, 122\}$,
$\{0, 20, 51, 75, 124\}$,\adfsplit
$\{0, 9, 34, 63, 86\}$,
$\{0, 28, 56, 84, 128\}$

}
\adfLgap \noindent by the mapping:
$x \mapsto x +  j \adfmod{112}$ for $x < 112$,
$x \mapsto (x +  j \adfmod{16}) + 112$ for $112 \le x < 128$,
$128 \mapsto 128$,
$0 \le j < 112$
 for the first seven blocks,
$0 \le j < 28$
 for the last block.
\ADFvfyParStart{(129, ((7, 112, ((112, 1), (16, 1), (1, 1))), (1, 28, ((112, 1), (16, 1), (1, 1)))), ((1, 112), (17, 1)))} 

\adfDgap
\noindent{\boldmath $ 1^{124} 21^{1} $}~
With the point set $\{0, 1, \dots, 144\}$ partitioned into
 residue classes modulo $124$ for $\{0, 1, \dots, 123\}$, and
 $\{124, 125, \dots, 144\}$,
 the design is generated from

\adfLgap {\adfBfont
$\{64, 23, 66, 136, 117\}$,
$\{122, 16, 125, 119, 9\}$,
$\{0, 1, 6, 35, 124\}$,\adfsplit
$\{0, 9, 55, 70, 127\}$,
$\{0, 10, 23, 97, 128\}$,
$\{0, 16, 40, 65, 98\}$,\adfsplit
$\{0, 17, 39, 77, 96\}$,
$\{0, 4, 36, 48, 56\}$,
$\{0, 31, 62, 93, 144\}$

}
\adfLgap \noindent by the mapping:
$x \mapsto x +  j \adfmod{124}$ for $x < 124$,
$x \mapsto (x - 124 + 5 j \adfmod{20}) + 124$ for $124 \le x < 144$,
$144 \mapsto 144$,
$0 \le j < 124$
 for the first eight blocks,
$0 \le j < 31$
 for the last block.
\ADFvfyParStart{(145, ((8, 124, ((124, 1), (20, 5), (1, 1))), (1, 31, ((124, 1), (20, 5), (1, 1)))), ((1, 124), (21, 1)))} 

\adfDgap
\noindent{\boldmath $ 1^{128} 9^{1} $}~
With the point set $\{0, 1, \dots, 136\}$ partitioned into
 residue classes modulo $128$ for $\{0, 1, \dots, 127\}$, and
 $\{128, 129, \dots, 136\}$,
 the design is generated from

\adfLgap {\adfBfont
$\{78, 4, 128, 66, 21\}$,
$\{93, 130, 34, 115, 65\}$,
$\{0, 1, 3, 9, 14\}$,\adfsplit
$\{0, 7, 34, 44, 82\}$,
$\{0, 4, 25, 67, 102\}$,
$\{0, 15, 33, 56, 85\}$,\adfsplit
$\{0, 16, 36, 55, 104\}$,
$\{0, 32, 64, 96, 136\}$

}
\adfLgap \noindent by the mapping:
$x \mapsto x +  j \adfmod{128}$ for $x < 128$,
$x \mapsto (x +  j \adfmod{8}) + 128$ for $128 \le x < 136$,
$136 \mapsto 136$,
$0 \le j < 128$
 for the first seven blocks,
$0 \le j < 32$
 for the last block.
\ADFvfyParStart{(137, ((7, 128, ((128, 1), (8, 1), (1, 1))), (1, 32, ((128, 1), (8, 1), (1, 1)))), ((1, 128), (9, 1)))} 

\adfDgap
\noindent{\boldmath $ 1^{128} 29^{1} $}~
With the point set $\{0, 1, \dots, 156\}$ partitioned into
 residue classes modulo $128$ for $\{0, 1, \dots, 127\}$, and
 $\{128, 129, \dots, 156\}$,
 the design is generated from

\adfLgap {\adfBfont
$\{61, 62, 89, 145, 71\}$,
$\{120, 53, 89, 11, 128\}$,
$\{100, 104, 21, 133, 124\}$,\adfsplit
$\{0, 2, 76, 123, 145\}$,
$\{0, 13, 39, 98, 152\}$,
$\{0, 16, 33, 62, 106\}$,\adfsplit
$\{0, 14, 51, 72, 130\}$,
$\{0, 3, 11, 68, 116\}$,
$\{0, 6, 41, 94, 140\}$,\adfsplit
$\{0, 32, 64, 96, 156\}$

}
\adfLgap \noindent by the mapping:
$x \mapsto x +  j \adfmod{128}$ for $x < 128$,
$x \mapsto (x +  j \adfmod{16}) + 128$ for $128 \le x < 144$,
$x \mapsto (x +  j \adfmod{8}) + 144$ for $144 \le x < 152$,
$x \mapsto (x +  j \adfmod{4}) + 152$ for $152 \le x < 156$,
$156 \mapsto 156$,
$0 \le j < 128$
 for the first nine blocks,
$0 \le j < 32$
 for the last block.
\ADFvfyParStart{(157, ((9, 128, ((128, 1), (16, 1), (8, 1), (4, 1), (1, 1))), (1, 32, ((128, 1), (16, 1), (8, 1), (4, 1), (1, 1)))), ((1, 128), (29, 1)))} 

\adfDgap
\noindent{\boldmath $ 1^{132} 17^{1} $}~
With the point set $\{0, 1, \dots, 148\}$ partitioned into
 residue classes modulo $132$ for $\{0, 1, \dots, 131\}$, and
 $\{132, 133, \dots, 148\}$,
 the design is generated from

\adfLgap {\adfBfont
$\{102, 126, 4, 6, 64\}$,
$\{23, 113, 109, 92, 143\}$,
$\{0, 1, 23, 26, 144\}$,\adfsplit
$\{0, 6, 37, 55, 84\}$,
$\{0, 7, 20, 35, 88\}$,
$\{0, 8, 73, 100, 141\}$,\adfsplit
$\{0, 9, 50, 89, 136\}$,
$\{0, 5, 16, 61, 118\}$,
$\{0, 33, 66, 99, 148\}$

}
\adfLgap \noindent by the mapping:
$x \mapsto x +  j \adfmod{132}$ for $x < 132$,
$x \mapsto (x +  j \adfmod{12}) + 132$ for $132 \le x < 144$,
$x \mapsto (x +  j \adfmod{4}) + 144$ for $144 \le x < 148$,
$148 \mapsto 148$,
$0 \le j < 132$
 for the first eight blocks,
$0 \le j < 33$
 for the last block.
\ADFvfyParStart{(149, ((8, 132, ((132, 1), (12, 1), (4, 1), (1, 1))), (1, 33, ((132, 1), (12, 1), (4, 1), (1, 1)))), ((1, 132), (17, 1)))} 

\adfDgap
\noindent{\boldmath $ 1^{132} 37^{1} $}~
With the point set $\{0, 1, \dots, 168\}$ partitioned into
 residue classes modulo $132$ for $\{0, 1, \dots, 131\}$, and
 $\{132, 133, \dots, 168\}$,
 the design is generated from

\adfLgap {\adfBfont
$\{29, 49, 145, 109, 99\}$,
$\{4, 108, 123, 2, 10\}$,
$\{158, 73, 72, 131, 30\}$,\adfsplit
$\{0, 3, 49, 54, 165\}$,
$\{0, 4, 69, 91, 134\}$,
$\{0, 7, 68, 95, 143\}$,\adfsplit
$\{0, 12, 30, 115, 137\}$,
$\{0, 21, 57, 97, 148\}$,
$\{0, 9, 32, 116, 155\}$,\adfsplit
$\{0, 14, 53, 108, 153\}$,
$\{0, 33, 66, 99, 132\}$

}
\adfLgap \noindent by the mapping:
$x \mapsto x +  j \adfmod{132}$ for $x < 132$,
$x \mapsto (x +  j \adfmod{33}) + 132$ for $132 \le x < 165$,
$x \mapsto (x - 165 +  j \adfmod{4}) + 165$ for $x \ge 165$,
$0 \le j < 132$
 for the first ten blocks,
$0 \le j < 33$
 for the last block.
\ADFvfyParStart{(169, ((10, 132, ((132, 1), (33, 1), (4, 1))), (1, 33, ((132, 1), (33, 1), (4, 1)))), ((1, 132), (37, 1)))} 

\adfDgap
\noindent{\boldmath $ 1^{136} 25^{1} $}~
With the point set $\{0, 1, \dots, 160\}$ partitioned into
 residue classes modulo $136$ for $\{0, 1, \dots, 135\}$, and
 $\{136, 137, \dots, 160\}$,
 the design is generated from

\adfLgap {\adfBfont
$\{148, 103, 109, 12, 19\}$,
$\{146, 51, 29, 87, 14\}$,
$\{0, 1, 10, 134, 148\}$,\adfsplit
$\{0, 4, 17, 42, 146\}$,
$\{0, 21, 44, 75, 108\}$,
$\{0, 11, 27, 80, 104\}$,\adfsplit
$\{0, 8, 26, 55, 96\}$,
$\{0, 20, 50, 85, 153\}$,
$\{0, 5, 62, 122, 150\}$,\adfsplit
$\{0, 34, 68, 102, 160\}$

}
\adfLgap \noindent by the mapping:
$x \mapsto x +  j \adfmod{136}$ for $x < 136$,
$x \mapsto (x - 136 + 3 j \adfmod{24}) + 136$ for $136 \le x < 160$,
$160 \mapsto 160$,
$0 \le j < 136$
 for the first nine blocks,
$0 \le j < 34$
 for the last block.
\ADFvfyParStart{(161, ((9, 136, ((136, 1), (24, 3), (1, 1))), (1, 34, ((136, 1), (24, 3), (1, 1)))), ((1, 136), (25, 1)))} 

\adfDgap
\noindent{\boldmath $ 1^{144} 21^{1} $}~
With the point set $\{0, 1, \dots, 164\}$ partitioned into
 residue classes modulo $144$ for $\{0, 1, \dots, 143\}$, and
 $\{144, 145, \dots, 164\}$,
 the design is generated from

\adfLgap {\adfBfont
$\{125, 72, 112, 120, 11\}$,
$\{50, 107, 33, 138, 88\}$,
$\{0, 22, 132, 86, 112\}$,\adfsplit
$\{0, 6, 21, 103, 160\}$,
$\{0, 2, 11, 79, 95\}$,
$\{0, 7, 78, 107, 144\}$,\adfsplit
$\{0, 4, 18, 63, 158\}$,
$\{0, 1, 25, 28, 148\}$,
$\{0, 10, 33, 102, 151\}$,\adfsplit
$\{0, 36, 72, 108, 164\}$

}
\adfLgap \noindent by the mapping:
$x \mapsto x +  j \adfmod{144}$ for $x < 144$,
$x \mapsto (x +  j \adfmod{16}) + 144$ for $144 \le x < 160$,
$x \mapsto (x +  j \adfmod{4}) + 160$ for $160 \le x < 164$,
$164 \mapsto 164$,
$0 \le j < 144$
 for the first nine blocks,
$0 \le j < 36$
 for the last block.
\ADFvfyParStart{(165, ((9, 144, ((144, 1), (16, 1), (4, 1), (1, 1))), (1, 36, ((144, 1), (16, 1), (4, 1), (1, 1)))), ((1, 144), (21, 1)))} 

\adfDgap
\noindent{\boldmath $ 1^{144} 41^{1} $}~
With the point set $\{0, 1, \dots, 184\}$ partitioned into
 residue classes modulo $144$ for $\{0, 1, \dots, 143\}$, and
 $\{144, 145, \dots, 184\}$,
 the design is generated from

\adfLgap {\adfBfont
$\{147, 45, 1, 70, 38\}$,
$\{26, 166, 4, 38, 99\}$,
$\{175, 93, 117, 94, 53\}$,\adfsplit
$\{107, 23, 13, 56, 91\}$,
$\{0, 2, 5, 131, 180\}$,
$\{0, 9, 62, 114, 169\}$,\adfsplit
$\{0, 4, 21, 67, 172\}$,
$\{0, 6, 96, 124, 171\}$,
$\{0, 8, 27, 65, 152\}$,\adfsplit
$\{0, 11, 58, 113, 178\}$,
$\{0, 14, 70, 99, 148\}$,
$\{0, 36, 72, 108, 184\}$

}
\adfLgap \noindent by the mapping:
$x \mapsto x +  j \adfmod{144}$ for $x < 144$,
$x \mapsto (x +  j \adfmod{36}) + 144$ for $144 \le x < 180$,
$x \mapsto (x +  j \adfmod{4}) + 180$ for $180 \le x < 184$,
$184 \mapsto 184$,
$0 \le j < 144$
 for the first 11 blocks,
$0 \le j < 36$
 for the last block.
\ADFvfyParStart{(185, ((11, 144, ((144, 1), (36, 1), (4, 1), (1, 1))), (1, 36, ((144, 1), (36, 1), (4, 1), (1, 1)))), ((1, 144), (41, 1)))} 

\adfDgap
\noindent{\boldmath $ 1^{148} 29^{1} $}~
With the point set $\{0, 1, \dots, 176\}$ partitioned into
 residue classes modulo $148$ for $\{0, 1, \dots, 147\}$, and
 $\{148, 149, \dots, 176\}$,
 the design is generated from

\adfLgap {\adfBfont
$\{154, 90, 12, 33, 55\}$,
$\{148, 51, 54, 52, 45\}$,
$\{105, 34, 60, 7, 158\}$,\adfsplit
$\{0, 5, 19, 30, 149\}$,
$\{0, 10, 41, 75, 152\}$,
$\{0, 15, 33, 82, 153\}$,\adfsplit
$\{0, 17, 63, 86, 150\}$,
$\{0, 4, 42, 55, 94\}$,
$\{0, 12, 40, 72, 101\}$,\adfsplit
$\{0, 8, 24, 44, 92\}$,
$\{0, 37, 74, 111, 176\}$

}
\adfLgap \noindent by the mapping:
$x \mapsto x +  j \adfmod{148}$ for $x < 148$,
$x \mapsto (x - 148 + 7 j \adfmod{28}) + 148$ for $148 \le x < 176$,
$176 \mapsto 176$,
$0 \le j < 148$
 for the first ten blocks,
$0 \le j < 37$
 for the last block.
\ADFvfyParStart{(177, ((10, 148, ((148, 1), (28, 7), (1, 1))), (1, 37, ((148, 1), (28, 7), (1, 1)))), ((1, 148), (29, 1)))} 

\adfDgap
\noindent{\boldmath $ 1^{152} 17^{1} $}~
With the point set $\{0, 1, \dots, 168\}$ partitioned into
 residue classes modulo $152$ for $\{0, 1, \dots, 151\}$, and
 $\{152, 153, \dots, 168\}$,
 the design is generated from

\adfLgap {\adfBfont
$\{83, 55, 96, 53, 38\}$,
$\{31, 4, 131, 165, 78\}$,
$\{64, 68, 152, 31, 9\}$,\adfsplit
$\{0, 1, 19, 92, 164\}$,
$\{0, 9, 21, 98, 165\}$,
$\{0, 10, 44, 80, 112\}$,\adfsplit
$\{0, 7, 31, 88, 117\}$,
$\{0, 3, 8, 14, 104\}$,
$\{0, 16, 39, 65, 85\}$,\adfsplit
$\{0, 38, 76, 114, 168\}$

}
\adfLgap \noindent by the mapping:
$x \mapsto x +  j \adfmod{152}$ for $x < 152$,
$x \mapsto (x - 152 + 2 j \adfmod{16}) + 152$ for $152 \le x < 168$,
$168 \mapsto 168$,
$0 \le j < 152$
 for the first nine blocks,
$0 \le j < 38$
 for the last block.
\ADFvfyParStart{(169, ((9, 152, ((152, 1), (16, 2), (1, 1))), (1, 38, ((152, 1), (16, 2), (1, 1)))), ((1, 152), (17, 1)))} 

\adfDgap
\noindent{\boldmath $ 1^{152} 37^{1} $}~
With the point set $\{0, 1, \dots, 188\}$ partitioned into
 residue classes modulo $152$ for $\{0, 1, \dots, 151\}$, and
 $\{152, 153, \dots, 188\}$,
 the design is generated from

\adfLgap {\adfBfont
$\{181, 146, 57, 149, 84\}$,
$\{72, 115, 154, 82, 101\}$,
$\{148, 97, 187, 114, 31\}$,\adfsplit
$\{82, 7, 166, 137, 52\}$,
$\{0, 1, 5, 58, 157\}$,
$\{0, 8, 24, 49, 64\}$,\adfsplit
$\{0, 6, 18, 54, 126\}$,
$\{0, 9, 46, 93, 156\}$,
$\{0, 7, 20, 81, 176\}$,\adfsplit
$\{0, 11, 42, 124, 171\}$,
$\{0, 2, 23, 102, 175\}$,
$\{0, 38, 76, 114, 188\}$

}
\adfLgap \noindent by the mapping:
$x \mapsto x +  j \adfmod{152}$ for $x < 152$,
$x \mapsto (x - 152 + 4 j \adfmod{32}) + 152$ for $152 \le x < 184$,
$x \mapsto (x +  j \adfmod{4}) + 184$ for $184 \le x < 188$,
$188 \mapsto 188$,
$0 \le j < 152$
 for the first 11 blocks,
$0 \le j < 38$
 for the last block.
\ADFvfyParStart{(189, ((11, 152, ((152, 1), (32, 4), (4, 1), (1, 1))), (1, 38, ((152, 1), (32, 4), (4, 1), (1, 1)))), ((1, 152), (37, 1)))} 

\adfDgap
\noindent{\boldmath $ 1^{156} 25^{1} $}~
With the point set $\{0, 1, \dots, 180\}$ partitioned into
 residue classes modulo $156$ for $\{0, 1, \dots, 155\}$, and
 $\{156, 157, \dots, 180\}$,
 the design is generated from

\adfLgap {\adfBfont
$\{32, 117, 8, 74, 57\}$,
$\{125, 55, 33, 85, 113\}$,
$\{41, 103, 40, 175, 114\}$,\adfsplit
$\{22, 18, 162, 68, 155\}$,
$\{0, 5, 20, 53, 125\}$,
$\{0, 14, 79, 135, 158\}$,\adfsplit
$\{0, 27, 61, 115, 162\}$,
$\{0, 3, 9, 16, 127\}$,
$\{0, 18, 55, 130, 171\}$,\adfsplit
$\{0, 2, 59, 148, 177\}$,
$\{0, 39, 78, 117, 180\}$

}
\adfLgap \noindent by the mapping:
$x \mapsto x +  j \adfmod{156}$ for $x < 156$,
$x \mapsto (x - 156 + 2 j \adfmod{24}) + 156$ for $156 \le x < 180$,
$180 \mapsto 180$,
$0 \le j < 156$
 for the first ten blocks,
$0 \le j < 39$
 for the last block.
\ADFvfyParStart{(181, ((10, 156, ((156, 1), (24, 2), (1, 1))), (1, 39, ((156, 1), (24, 2), (1, 1)))), ((1, 156), (25, 1)))} 

\adfDgap
\noindent{\boldmath $ 1^{156} 45^{1} $}~
With the point set $\{0, 1, \dots, 200\}$ partitioned into
 residue classes modulo $156$ for $\{0, 1, \dots, 155\}$, and
 $\{156, 157, \dots, 200\}$,
 the design is generated from

\adfLgap {\adfBfont
$\{155, 128, 193, 57, 154\}$,
$\{177, 141, 110, 95, 97\}$,
$\{138, 20, 133, 166, 127\}$,\adfsplit
$\{194, 85, 104, 127, 34\}$,
$\{80, 84, 62, 183, 117\}$,
$\{0, 10, 99, 131, 162\}$,\adfsplit
$\{0, 12, 40, 96, 120\}$,
$\{0, 7, 41, 61, 175\}$,
$\{0, 17, 47, 111, 166\}$,\adfsplit
$\{0, 8, 29, 81, 185\}$,
$\{0, 3, 90, 106, 173\}$,
$\{0, 9, 77, 91, 167\}$,\adfsplit
$\{0, 39, 78, 117, 200\}$

}
\adfLgap \noindent by the mapping:
$x \mapsto x +  j \adfmod{156}$ for $x < 156$,
$x \mapsto (x - 156 + 3 j \adfmod{36}) + 156$ for $156 \le x < 192$,
$x \mapsto (x + 2 j \adfmod{8}) + 192$ for $192 \le x < 200$,
$200 \mapsto 200$,
$0 \le j < 156$
 for the first 12 blocks,
$0 \le j < 39$
 for the last block.
\ADFvfyParStart{(201, ((12, 156, ((156, 1), (36, 3), (8, 2), (1, 1))), (1, 39, ((156, 1), (36, 3), (8, 2), (1, 1)))), ((1, 156), (45, 1)))} 

\adfDgap
\noindent{\boldmath $ 1^{160} 33^{1} $}~
With the point set $\{0, 1, \dots, 192\}$ partitioned into
 residue classes modulo $160$ for $\{0, 1, \dots, 159\}$, and
 $\{160, 161, \dots, 192\}$,
 the design is generated from

\adfLgap {\adfBfont
$\{58, 188, 97, 53, 148\}$,
$\{73, 84, 97, 167, 42\}$,
$\{161, 48, 66, 39, 150\}$,\adfsplit
$\{17, 151, 5, 161, 51\}$,
$\{0, 1, 75, 79, 184\}$,
$\{0, 7, 61, 104, 172\}$,\adfsplit
$\{0, 8, 33, 140, 161\}$,
$\{0, 15, 62, 92, 178\}$,
$\{0, 17, 67, 89, 108\}$,\adfsplit
$\{0, 3, 32, 38, 48\}$,
$\{0, 2, 23, 59, 96\}$,
$\{0, 40, 80, 120, 192\}$

}
\adfLgap \noindent by the mapping:
$x \mapsto x +  j \adfmod{160}$ for $x < 160$,
$x \mapsto (x +  j \adfmod{32}) + 160$ for $160 \le x < 192$,
$192 \mapsto 192$,
$0 \le j < 160$
 for the first 11 blocks,
$0 \le j < 40$
 for the last block.
\ADFvfyParStart{(193, ((11, 160, ((160, 1), (32, 1), (1, 1))), (1, 40, ((160, 1), (32, 1), (1, 1)))), ((1, 160), (33, 1)))} 

\adfDgap
\noindent{\boldmath $ 1^{164} 21^{1} $}~
With the point set $\{0, 1, \dots, 184\}$ partitioned into
 residue classes modulo $164$ for $\{0, 1, \dots, 163\}$, and
 $\{164, 165, \dots, 184\}$,
 the design is generated from

\adfLgap {\adfBfont
$\{136, 87, 3, 31, 135\}$,
$\{81, 135, 120, 62, 102\}$,
$\{105, 7, 170, 62, 88\}$,\adfsplit
$\{0, 2, 71, 77, 168\}$,
$\{0, 9, 46, 99, 167\}$,
$\{0, 10, 23, 117, 164\}$,\adfsplit
$\{0, 22, 51, 101, 166\}$,
$\{0, 3, 7, 129, 159\}$,
$\{0, 14, 34, 78, 102\}$,\adfsplit
$\{0, 11, 27, 72, 139\}$,
$\{0, 41, 82, 123, 184\}$

}
\adfLgap \noindent by the mapping:
$x \mapsto x +  j \adfmod{164}$ for $x < 164$,
$x \mapsto (x - 164 + 5 j \adfmod{20}) + 164$ for $164 \le x < 184$,
$184 \mapsto 184$,
$0 \le j < 164$
 for the first ten blocks,
$0 \le j < 41$
 for the last block.
\ADFvfyParStart{(185, ((10, 164, ((164, 1), (20, 5), (1, 1))), (1, 41, ((164, 1), (20, 5), (1, 1)))), ((1, 164), (21, 1)))} 

\adfDgap
\noindent{\boldmath $ 1^{164} 41^{1} $}~
With the point set $\{0, 1, \dots, 204\}$ partitioned into
 residue classes modulo $164$ for $\{0, 1, \dots, 163\}$, and
 $\{164, 165, \dots, 204\}$,
 the design is generated from

\adfLgap {\adfBfont
$\{19, 76, 176, 22, 45\}$,
$\{78, 201, 25, 7, 88\}$,
$\{145, 106, 203, 55, 72\}$,\adfsplit
$\{39, 168, 9, 52, 94\}$,
$\{0, 1, 6, 95, 164\}$,
$\{0, 2, 27, 49, 170\}$,\adfsplit
$\{0, 7, 21, 126, 165\}$,
$\{0, 9, 67, 86, 169\}$,
$\{0, 11, 46, 61, 172\}$,\adfsplit
$\{0, 29, 62, 127, 167\}$,
$\{0, 4, 60, 76, 100\}$,
$\{0, 8, 20, 52, 136\}$,\adfsplit
$\{0, 41, 82, 123, 204\}$

}
\adfLgap \noindent by the mapping:
$x \mapsto x +  j \adfmod{164}$ for $x < 164$,
$x \mapsto (x - 164 + 10 j \adfmod{40}) + 164$ for $164 \le x < 204$,
$204 \mapsto 204$,
$0 \le j < 164$
 for the first 12 blocks,
$0 \le j < 41$
 for the last block.
\ADFvfyParStart{(205, ((12, 164, ((164, 1), (40, 10), (1, 1))), (1, 41, ((164, 1), (40, 10), (1, 1)))), ((1, 164), (41, 1)))} 

\adfDgap
\noindent{\boldmath $ 1^{168} 29^{1} $}~
With the point set $\{0, 1, \dots, 196\}$ partitioned into
 residue classes modulo $168$ for $\{0, 1, \dots, 167\}$, and
 $\{168, 169, \dots, 196\}$,
 the design is generated from

\adfLgap {\adfBfont
$\{190, 159, 102, 2, 25\}$,
$\{161, 67, 58, 191, 122\}$,
$\{32, 194, 39, 54, 68\}$,\adfsplit
$\{20, 65, 137, 25, 163\}$,
$\{0, 1, 4, 62, 148\}$,
$\{0, 10, 28, 88, 115\}$,\adfsplit
$\{0, 17, 66, 109, 184\}$,
$\{0, 8, 122, 135, 186\}$,
$\{0, 2, 99, 118, 179\}$,\adfsplit
$\{0, 6, 37, 130, 175\}$,
$\{0, 12, 47, 79, 95\}$,
$\{0, 42, 84, 126, 196\}$

}
\adfLgap \noindent by the mapping:
$x \mapsto x +  j \adfmod{168}$ for $x < 168$,
$x \mapsto (x +  j \adfmod{28}) + 168$ for $168 \le x < 196$,
$196 \mapsto 196$,
$0 \le j < 168$
 for the first 11 blocks,
$0 \le j < 42$
 for the last block.
\ADFvfyParStart{(197, ((11, 168, ((168, 1), (28, 1), (1, 1))), (1, 42, ((168, 1), (28, 1), (1, 1)))), ((1, 168), (29, 1)))} 

\adfDgap
\noindent{\boldmath $ 1^{168} 49^{1} $}~
With the point set $\{0, 1, \dots, 216\}$ partitioned into
 residue classes modulo $168$ for $\{0, 1, \dots, 167\}$, and
 $\{168, 169, \dots, 216\}$,
 the design is generated from

\adfLgap {\adfBfont
$\{190, 2, 20, 134, 132\}$,
$\{196, 9, 156, 85, 162\}$,
$\{206, 21, 96, 49, 37\}$,\adfsplit
$\{121, 134, 68, 209, 36\}$,
$\{39, 158, 84, 169, 157\}$,
$\{0, 3, 141, 146, 194\}$,\adfsplit
$\{0, 4, 68, 158, 176\}$,
$\{0, 19, 52, 86, 182\}$,
$\{0, 9, 40, 105, 129\}$,\adfsplit
$\{0, 8, 43, 69, 215\}$,
$\{0, 7, 51, 62, 211\}$,
$\{0, 17, 46, 127, 209\}$,\adfsplit
$\{0, 20, 57, 80, 171\}$,
$\{0, 42, 84, 126, 216\}$

}
\adfLgap \noindent by the mapping:
$x \mapsto x +  j \adfmod{168}$ for $x < 168$,
$x \mapsto (x - 168 + 2 j \adfmod{48}) + 168$ for $168 \le x < 216$,
$216 \mapsto 216$,
$0 \le j < 168$
 for the first 13 blocks,
$0 \le j < 42$
 for the last block.
\ADFvfyParStart{(217, ((13, 168, ((168, 1), (48, 2), (1, 1))), (1, 42, ((168, 1), (48, 2), (1, 1)))), ((1, 168), (49, 1)))} 

\adfDgap
\noindent{\boldmath $ 1^{172} 17^{1} $}~
With the point set $\{0, 1, \dots, 188\}$ partitioned into
 residue classes modulo $172$ for $\{0, 1, \dots, 171\}$, and
 $\{172, 173, \dots, 188\}$,
 the design is generated from

\adfLgap {\adfBfont
$\{91, 84, 164, 53, 31\}$,
$\{113, 19, 59, 8, 56\}$,
$\{136, 163, 111, 144, 8\}$,\adfsplit
$\{0, 5, 47, 106, 173\}$,
$\{0, 9, 35, 98, 174\}$,
$\{0, 15, 49, 90, 172\}$,\adfsplit
$\{0, 21, 79, 102, 175\}$,
$\{0, 1, 46, 96, 108\}$,
$\{0, 13, 29, 85, 117\}$,\adfsplit
$\{0, 2, 6, 20, 30\}$,
$\{0, 43, 86, 129, 188\}$

}
\adfLgap \noindent by the mapping:
$x \mapsto x +  j \adfmod{172}$ for $x < 172$,
$x \mapsto (x - 172 + 4 j \adfmod{16}) + 172$ for $172 \le x < 188$,
$188 \mapsto 188$,
$0 \le j < 172$
 for the first ten blocks,
$0 \le j < 43$
 for the last block.
\ADFvfyParStart{(189, ((10, 172, ((172, 1), (16, 4), (1, 1))), (1, 43, ((172, 1), (16, 4), (1, 1)))), ((1, 172), (17, 1)))} 

\adfDgap
\noindent{\boldmath $ 1^{176} 25^{1} $}~
With the point set $\{0, 1, \dots, 200\}$ partitioned into
 residue classes modulo $176$ for $\{0, 1, \dots, 175\}$, and
 $\{176, 177, \dots, 200\}$,
 the design is generated from

\adfLgap {\adfBfont
$\{0, 124, 140, 112, 161\}$,
$\{109, 53, 180, 51, 106\}$,
$\{173, 103, 10, 125, 84\}$,\adfsplit
$\{137, 45, 0, 171, 99\}$,
$\{0, 1, 30, 97, 101\}$,
$\{0, 7, 32, 117, 143\}$,\adfsplit
$\{0, 9, 23, 69, 193\}$,
$\{0, 11, 31, 73, 198\}$,
$\{0, 6, 63, 158, 178\}$,\adfsplit
$\{0, 10, 108, 159, 181\}$,
$\{0, 8, 43, 90, 184\}$,
$\{0, 44, 88, 132, 200\}$

}
\adfLgap \noindent by the mapping:
$x \mapsto x +  j \adfmod{176}$ for $x < 176$,
$x \mapsto (x +  j \adfmod{16}) + 176$ for $176 \le x < 192$,
$x \mapsto (x +  j \adfmod{8}) + 192$ for $192 \le x < 200$,
$200 \mapsto 200$,
$0 \le j < 176$
 for the first 11 blocks,
$0 \le j < 44$
 for the last block.
\ADFvfyParStart{(201, ((11, 176, ((176, 1), (16, 1), (8, 1), (1, 1))), (1, 44, ((176, 1), (16, 1), (8, 1), (1, 1)))), ((1, 176), (25, 1)))} 

\adfDgap
\noindent{\boldmath $ 1^{176} 45^{1} $}~
With the point set $\{0, 1, \dots, 220\}$ partitioned into
 residue classes modulo $176$ for $\{0, 1, \dots, 175\}$, and
 $\{176, 177, \dots, 220\}$,
 the design is generated from

\adfLgap {\adfBfont
$\{187, 128, 49, 139, 167\}$,
$\{111, 115, 92, 38, 193\}$,
$\{9, 186, 175, 161, 51\}$,\adfsplit
$\{91, 152, 112, 195, 158\}$,
$\{209, 121, 151, 143, 148\}$,
$\{0, 1, 143, 145, 208\}$,\adfsplit
$\{0, 18, 69, 141, 183\}$,
$\{0, 15, 70, 117, 188\}$,
$\{0, 9, 71, 96, 194\}$,\adfsplit
$\{0, 7, 48, 91, 108\}$,
$\{0, 20, 57, 113, 181\}$,
$\{0, 12, 112, 138, 184\}$,\adfsplit
$\{0, 13, 49, 78, 94\}$,
$\{0, 44, 88, 132, 220\}$

}
\adfLgap \noindent by the mapping:
$x \mapsto x +  j \adfmod{176}$ for $x < 176$,
$x \mapsto (x +  j \adfmod{44}) + 176$ for $176 \le x < 220$,
$220 \mapsto 220$,
$0 \le j < 176$
 for the first 13 blocks,
$0 \le j < 44$
 for the last block.
\ADFvfyParStart{(221, ((13, 176, ((176, 1), (44, 1), (1, 1))), (1, 44, ((176, 1), (44, 1), (1, 1)))), ((1, 176), (45, 1)))} 

\adfDgap
\noindent{\boldmath $ 1^{184} 21^{1} $}~
With the point set $\{0, 1, \dots, 204\}$ partitioned into
 residue classes modulo $184$ for $\{0, 1, \dots, 183\}$, and
 $\{184, 185, \dots, 204\}$,
 the design is generated from

\adfLgap {\adfBfont
$\{36, 2, 116, 109, 165\}$,
$\{24, 102, 124, 125, 28\}$,
$\{68, 197, 131, 0, 157\}$,\adfsplit
$\{71, 52, 119, 157, 88\}$,
$\{0, 3, 15, 50, 199\}$,
$\{0, 13, 42, 103, 200\}$,\adfsplit
$\{0, 2, 18, 120, 145\}$,
$\{0, 6, 20, 132, 160\}$,
$\{0, 5, 37, 45, 130\}$,\adfsplit
$\{0, 9, 60, 71, 186\}$,
$\{0, 10, 43, 119, 196\}$,
$\{0, 46, 92, 138, 204\}$

}
\adfLgap \noindent by the mapping:
$x \mapsto x +  j \adfmod{184}$ for $x < 184$,
$x \mapsto (x - 184 + 2 j \adfmod{16}) + 184$ for $184 \le x < 200$,
$x \mapsto (x +  j \adfmod{4}) + 200$ for $200 \le x < 204$,
$204 \mapsto 204$,
$0 \le j < 184$
 for the first 11 blocks,
$0 \le j < 46$
 for the last block.
\ADFvfyParStart{(205, ((11, 184, ((184, 1), (16, 2), (4, 1), (1, 1))), (1, 46, ((184, 1), (16, 2), (4, 1), (1, 1)))), ((1, 184), (21, 1)))} 

\adfDgap
\noindent{\boldmath $ 1^{184} 41^{1} $}~
With the point set $\{0, 1, \dots, 224\}$ partitioned into
 residue classes modulo $184$ for $\{0, 1, \dots, 183\}$, and
 $\{184, 185, \dots, 224\}$,
 the design is generated from

\adfLgap {\adfBfont
$\{200, 80, 93, 174, 155\}$,
$\{71, 185, 177, 148, 14\}$,
$\{70, 96, 61, 217, 58\}$,\adfsplit
$\{20, 121, 208, 24, 85\}$,
$\{219, 136, 1, 26, 85\}$,
$\{214, 29, 115, 182, 122\}$,\adfsplit
$\{0, 20, 63, 131, 193\}$,
$\{0, 14, 41, 140, 202\}$,
$\{0, 1, 16, 96, 129\}$,\adfsplit
$\{0, 10, 76, 115, 206\}$,
$\{0, 6, 17, 54, 156\}$,
$\{0, 2, 24, 32, 144\}$,\adfsplit
$\{0, 5, 52, 166, 201\}$,
$\{0, 46, 92, 138, 224\}$

}
\adfLgap \noindent by the mapping:
$x \mapsto x +  j \adfmod{184}$ for $x < 184$,
$x \mapsto (x - 184 + 5 j \adfmod{40}) + 184$ for $184 \le x < 224$,
$224 \mapsto 224$,
$0 \le j < 184$
 for the first 13 blocks,
$0 \le j < 46$
 for the last block.
\ADFvfyParStart{(225, ((13, 184, ((184, 1), (40, 5), (1, 1))), (1, 46, ((184, 1), (40, 5), (1, 1)))), ((1, 184), (41, 1)))} 

\adfDgap
\noindent{\boldmath $ 1^{188} 29^{1} $}~
With the point set $\{0, 1, \dots, 216\}$ partitioned into
 residue classes modulo $188$ for $\{0, 1, \dots, 187\}$, and
 $\{188, 189, \dots, 216\}$,
 the design is generated from

\adfLgap {\adfBfont
$\{209, 108, 5, 82, 71\}$,
$\{109, 196, 182, 148, 119\}$,
$\{124, 79, 170, 110, 66\}$,\adfsplit
$\{204, 56, 107, 106, 125\}$,
$\{0, 2, 5, 27, 191\}$,
$\{0, 6, 21, 95, 194\}$,\adfsplit
$\{0, 17, 59, 82, 193\}$,
$\{0, 30, 79, 117, 192\}$,
$\{0, 4, 24, 102, 156\}$,\adfsplit
$\{0, 8, 61, 70, 113\}$,
$\{0, 12, 67, 108, 124\}$,
$\{0, 7, 35, 107, 155\}$,\adfsplit
$\{0, 47, 94, 141, 216\}$

}
\adfLgap \noindent by the mapping:
$x \mapsto x +  j \adfmod{188}$ for $x < 188$,
$x \mapsto (x - 188 + 7 j \adfmod{28}) + 188$ for $188 \le x < 216$,
$216 \mapsto 216$,
$0 \le j < 188$
 for the first 12 blocks,
$0 \le j < 47$
 for the last block.
\ADFvfyParStart{(217, ((12, 188, ((188, 1), (28, 7), (1, 1))), (1, 47, ((188, 1), (28, 7), (1, 1)))), ((1, 188), (29, 1)))} 

\adfDgap
\noindent{\boldmath $ 1^{192} 17^{1} $}~
With the point set $\{0, 1, \dots, 208\}$ partitioned into
 residue classes modulo $192$ for $\{0, 1, \dots, 191\}$, and
 $\{192, 193, \dots, 208\}$,
 the design is generated from

\adfLgap {\adfBfont
$\{43, 188, 94, 101, 134\}$,
$\{189, 115, 101, 119, 39\}$,
$\{116, 135, 188, 166, 99\}$,\adfsplit
$\{129, 88, 134, 190, 202\}$,
$\{0, 1, 3, 12, 129\}$,
$\{0, 28, 60, 99, 137\}$,\adfsplit
$\{0, 16, 43, 111, 135\}$,
$\{0, 15, 52, 162, 202\}$,
$\{0, 8, 21, 86, 197\}$,\adfsplit
$\{0, 6, 26, 85, 169\}$,
$\{0, 10, 35, 158, 193\}$,
$\{0, 48, 96, 144, 208\}$

}
\adfLgap \noindent by the mapping:
$x \mapsto x +  j \adfmod{192}$ for $x < 192$,
$x \mapsto (x +  j \adfmod{16}) + 192$ for $192 \le x < 208$,
$208 \mapsto 208$,
$0 \le j < 192$
 for the first 11 blocks,
$0 \le j < 48$
 for the last block.
\ADFvfyParStart{(209, ((11, 192, ((192, 1), (16, 1), (1, 1))), (1, 48, ((192, 1), (16, 1), (1, 1)))), ((1, 192), (17, 1)))} 

\adfDgap
\noindent{\boldmath $ 1^{192} 37^{1} $}~
With the point set $\{0, 1, \dots, 228\}$ partitioned into
 residue classes modulo $192$ for $\{0, 1, \dots, 191\}$, and
 $\{192, 193, \dots, 228\}$,
 the design is generated from

\adfLgap {\adfBfont
$\{128, 120, 2, 201, 90\}$,
$\{182, 67, 222, 58, 72\}$,
$\{152, 61, 7, 156, 85\}$,\adfsplit
$\{41, 62, 206, 168, 74\}$,
$\{72, 83, 44, 128, 28\}$,
$\{219, 29, 54, 2, 3\}$,\adfsplit
$\{0, 35, 105, 151, 218\}$,
$\{0, 15, 32, 72, 90\}$,
$\{0, 2, 36, 85, 200\}$,\adfsplit
$\{0, 3, 22, 64, 133\}$,
$\{0, 7, 80, 179, 202\}$,
$\{0, 10, 63, 113, 196\}$,\adfsplit
$\{0, 6, 29, 161, 214\}$,
$\{0, 48, 96, 144, 228\}$

}
\adfLgap \noindent by the mapping:
$x \mapsto x +  j \adfmod{192}$ for $x < 192$,
$x \mapsto (x +  j \adfmod{24}) + 192$ for $192 \le x < 216$,
$x \mapsto (x +  j \adfmod{12}) + 216$ for $216 \le x < 228$,
$228 \mapsto 228$,
$0 \le j < 192$
 for the first 13 blocks,
$0 \le j < 48$
 for the last block.
\ADFvfyParStart{(229, ((13, 192, ((192, 1), (24, 1), (12, 1), (1, 1))), (1, 48, ((192, 1), (24, 1), (12, 1), (1, 1)))), ((1, 192), (37, 1)))} 

\adfDgap
\noindent{\boldmath $ 1^{192} 57^{1} $}~
With the point set $\{0, 1, \dots, 248\}$ partitioned into
 residue classes modulo $192$ for $\{0, 1, \dots, 191\}$, and
 $\{192, 193, \dots, 248\}$,
 the design is generated from

\adfLgap {\adfBfont
$\{60, 186, 233, 46, 43\}$,
$\{165, 107, 65, 211, 50\}$,
$\{93, 113, 193, 120, 77\}$,\adfsplit
$\{27, 237, 101, 83, 182\}$,
$\{57, 238, 78, 82, 90\}$,
$\{174, 246, 61, 143, 114\}$,\adfsplit
$\{80, 124, 57, 200, 119\}$,
$\{0, 26, 61, 146, 227\}$,
$\{0, 1, 11, 76, 246\}$,\adfsplit
$\{0, 6, 38, 128, 147\}$,
$\{0, 9, 68, 98, 199\}$,
$\{0, 40, 95, 145, 202\}$,\adfsplit
$\{0, 24, 78, 112, 205\}$,
$\{0, 22, 63, 91, 214\}$,
$\{0, 2, 73, 86, 220\}$,\adfsplit
$\{0, 48, 96, 144, 248\}$

}
\adfLgap \noindent by the mapping:
$x \mapsto x +  j \adfmod{192}$ for $x < 192$,
$x \mapsto (x +  j \adfmod{32}) + 192$ for $192 \le x < 224$,
$x \mapsto (x +  j \adfmod{16}) + 224$ for $224 \le x < 240$,
$x \mapsto (x +  j \adfmod{8}) + 240$ for $240 \le x < 248$,
$248 \mapsto 248$,
$0 \le j < 192$
 for the first 15 blocks,
$0 \le j < 48$
 for the last block.
\ADFvfyParStart{(249, ((15, 192, ((192, 1), (32, 1), (16, 1), (8, 1), (1, 1))), (1, 48, ((192, 1), (32, 1), (16, 1), (8, 1), (1, 1)))), ((1, 192), (57, 1)))} 

\adfDgap
\noindent{\boldmath $ 1^{196} 25^{1} $}~
With the point set $\{0, 1, \dots, 220\}$ partitioned into
 residue classes modulo $196$ for $\{0, 1, \dots, 195\}$, and
 $\{196, 197, \dots, 220\}$,
 the design is generated from

\adfLgap {\adfBfont
$\{53, 122, 200, 96, 131\}$,
$\{147, 142, 120, 179, 139\}$,
$\{202, 25, 102, 32, 183\}$,\adfsplit
$\{130, 204, 147, 13, 76\}$,
$\{16, 47, 144, 57, 168\}$,
$\{0, 1, 83, 106, 201\}$,\adfsplit
$\{0, 11, 53, 146, 199\}$,
$\{0, 21, 55, 94, 197\}$,
$\{0, 6, 57, 72, 86\}$,\adfsplit
$\{0, 12, 25, 58, 132\}$,
$\{0, 2, 20, 67, 168\}$,
$\{0, 4, 60, 96, 112\}$,\adfsplit
$\{0, 49, 98, 147, 220\}$

}
\adfLgap \noindent by the mapping:
$x \mapsto x +  j \adfmod{196}$ for $x < 196$,
$x \mapsto (x - 196 + 6 j \adfmod{24}) + 196$ for $196 \le x < 220$,
$220 \mapsto 220$,
$0 \le j < 196$
 for the first 12 blocks,
$0 \le j < 49$
 for the last block.
\ADFvfyParStart{(221, ((12, 196, ((196, 1), (24, 6), (1, 1))), (1, 49, ((196, 1), (24, 6), (1, 1)))), ((1, 196), (25, 1)))} 

\adfDgap
\noindent{\boldmath $ 1^{196} 45^{1} $}~
With the point set $\{0, 1, \dots, 240\}$ partitioned into
 residue classes modulo $196$ for $\{0, 1, \dots, 195\}$, and
 $\{196, 197, \dots, 240\}$,
 the design is generated from

\adfLgap {\adfBfont
$\{76, 239, 147, 146, 161\}$,
$\{191, 84, 112, 138, 149\}$,
$\{7, 233, 12, 174, 165\}$,\adfsplit
$\{224, 124, 145, 94, 151\}$,
$\{182, 230, 157, 27, 60\}$,
$\{175, 116, 85, 203, 81\}$,\adfsplit
$\{161, 219, 57, 12, 89\}$,
$\{0, 2, 20, 82, 138\}$,
$\{0, 7, 24, 68, 91\}$,\adfsplit
$\{0, 3, 22, 123, 203\}$,
$\{0, 8, 108, 121, 221\}$,
$\{0, 16, 55, 103, 210\}$,\adfsplit
$\{0, 12, 64, 127, 216\}$,
$\{0, 10, 46, 156, 205\}$,
$\{0, 49, 98, 147, 240\}$

}
\adfLgap \noindent by the mapping:
$x \mapsto x +  j \adfmod{196}$ for $x < 196$,
$x \mapsto (x +  j \adfmod{28}) + 196$ for $196 \le x < 224$,
$x \mapsto (x + 4 j \adfmod{16}) + 224$ for $224 \le x < 240$,
$240 \mapsto 240$,
$0 \le j < 196$
 for the first 14 blocks,
$0 \le j < 49$
 for the last block.
\ADFvfyParStart{(241, ((14, 196, ((196, 1), (28, 1), (16, 4), (1, 1))), (1, 49, ((196, 1), (28, 1), (16, 4), (1, 1)))), ((1, 196), (45, 1)))} 

\adfDgap
\noindent{\boldmath $ 2^{32} 14^{1} $}~
With the point set $\{0, 1, \dots, 77\}$ partitioned into
 residue classes modulo $32$ for $\{0, 1, \dots, 63\}$, and
 $\{64, 65, \dots, 77\}$,
 the design is generated from

\adfLgap {\adfBfont
$\{0, 11, 20, 53, 65\}$,
$\{0, 1, 7, 62, 76\}$,
$\{0, 4, 30, 43, 69\}$,\adfsplit
$\{0, 23, 59, 63, 64\}$,
$\{0, 12, 49, 57, 66\}$,
$\{0, 6, 16, 24, 41\}$,\adfsplit
$\{0, 5, 21, 31, 51\}$,
$\{0, 14, 42, 61, 73\}$,
$\{0, 15, 27, 29, 75\}$

}
\adfLgap \noindent by the mapping:
$x \mapsto x + 2 j \adfmod{64}$ for $x < 64$,
$x \mapsto (x +  j \adfmod{8}) + 64$ for $64 \le x < 72$,
$x \mapsto (x +  j \adfmod{4}) + 72$ for $72 \le x < 76$,
$x \mapsto (x +  j \adfmod{2}) + 76$ for $x \ge 76$,
$0 \le j < 32$.
\ADFvfyParStart{(78, ((9, 32, ((64, 2), (8, 1), (4, 1), (2, 1)))), ((2, 32), (14, 1)))} 

\adfDgap
\noindent{\boldmath $ 2^{40} 6^{1} $}~
With the point set $\{0, 1, \dots, 85\}$ partitioned into
 residue classes modulo $40$ for $\{0, 1, \dots, 79\}$, and
 $\{80, 81, \dots, 85\}$,
 the design is generated from

\adfLgap {\adfBfont
$\{51, 31, 63, 50, 27\}$,
$\{4, 7, 69, 34, 71\}$,
$\{0, 2, 7, 73, 80\}$,\adfsplit
$\{0, 6, 49, 75, 82\}$,
$\{0, 20, 41, 51, 79\}$,
$\{0, 11, 45, 62, 81\}$,\adfsplit
$\{0, 8, 17, 23, 64\}$,
$\{0, 19, 27, 52, 77\}$,
$\{0, 4, 14, 46, 58\}$

}
\adfLgap \noindent by the mapping:
$x \mapsto x + 2 j \adfmod{80}$ for $x < 80$,
$x \mapsto (x - 80 + 3 j \adfmod{6}) + 80$ for $x \ge 80$,
$0 \le j < 40$.
\ADFvfyParStart{(86, ((9, 40, ((80, 2), (6, 3)))), ((2, 40), (6, 1)))} 

\adfDgap
\noindent{\boldmath $ 2^{48} 18^{1} $}~
With the point set $\{0, 1, \dots, 113\}$ partitioned into
 residue classes modulo $48$ for $\{0, 1, \dots, 95\}$, and
 $\{96, 97, \dots, 113\}$,
 the design is generated from

\adfLgap {\adfBfont
$\{111, 83, 60, 53, 38\}$,
$\{15, 79, 50, 48, 84\}$,
$\{80, 3, 98, 19, 57\}$,\adfsplit
$\{0, 5, 32, 58, 79\}$,
$\{0, 1, 3, 9, 108\}$,
$\{0, 6, 39, 82, 112\}$,\adfsplit
$\{0, 8, 65, 80, 93\}$,
$\{0, 41, 51, 75, 87\}$,
$\{0, 7, 66, 83, 101\}$,\adfsplit
$\{1, 5, 57, 83, 107\}$,
$\{0, 11, 12, 52, 97\}$,
$\{0, 10, 59, 78, 105\}$,\adfsplit
$\{0, 4, 46, 71, 98\}$

}
\adfLgap \noindent by the mapping:
$x \mapsto x + 2 j \adfmod{96}$ for $x < 96$,
$x \mapsto (x +  j \adfmod{12}) + 96$ for $96 \le x < 108$,
$x \mapsto (x +  j \adfmod{6}) + 108$ for $x \ge 108$,
$0 \le j < 48$.
\ADFvfyParStart{(114, ((13, 48, ((96, 2), (12, 1), (6, 1)))), ((2, 48), (18, 1)))} 

\adfDgap
\noindent{\boldmath $ 2^{52} 14^{1} $}~
With the point set $\{0, 1, \dots, 117\}$ partitioned into
 residue classes modulo $52$ for $\{0, 1, \dots, 103\}$, and
 $\{104, 105, \dots, 117\}$,
 the design is generated from

\adfLgap {\adfBfont
$\{55, 54, 97, 0, 15\}$,
$\{26, 87, 56, 109, 29\}$,
$\{112, 30, 83, 65, 20\}$,\adfsplit
$\{16, 83, 11, 27, 95\}$,
$\{0, 2, 9, 103, 108\}$,
$\{0, 5, 39, 86, 104\}$,\adfsplit
$\{0, 13, 17, 46, 93\}$,
$\{0, 25, 51, 70, 106\}$,
$\{0, 21, 81, 87, 95\}$,\adfsplit
$\{0, 27, 29, 42, 107\}$,
$\{0, 14, 33, 83, 110\}$,
$\{0, 4, 41, 72, 96\}$,\adfsplit
$\{0, 6, 26, 66, 82\}$

}
\adfLgap \noindent by the mapping:
$x \mapsto x + 2 j \adfmod{104}$ for $x < 104$,
$x \mapsto (x - 104 + 7 j \adfmod{14}) + 104$ for $x \ge 104$,
$0 \le j < 52$.
\ADFvfyParStart{(118, ((13, 52, ((104, 2), (14, 7)))), ((2, 52), (14, 1)))} 

\adfDgap
\noindent{\boldmath $ 2^{56} 10^{1} $}~
With the point set $\{0, 1, \dots, 121\}$ partitioned into
 residue classes modulo $56$ for $\{0, 1, \dots, 111\}$, and
 $\{112, 113, \dots, 121\}$,
 the design is generated from

\adfLgap {\adfBfont
$\{120, 105, 88, 79, 102\}$,
$\{20, 107, 10, 52, 16\}$,
$\{20, 78, 116, 105, 45\}$,\adfsplit
$\{67, 72, 116, 26, 31\}$,
$\{0, 1, 2, 92, 116\}$,
$\{0, 15, 37, 49, 113\}$,\adfsplit
$\{0, 8, 26, 61, 77\}$,
$\{0, 9, 29, 48, 82\}$,
$\{0, 7, 44, 72, 96\}$,\adfsplit
$\{0, 12, 43, 45, 74\}$,
$\{0, 13, 19, 63, 101\}$,
$\{0, 11, 57, 65, 105\}$,\adfsplit
$\{0, 67, 95, 99, 109\}$

}
\adfLgap \noindent by the mapping:
$x \mapsto x + 2 j \adfmod{112}$ for $x < 112$,
$x \mapsto (x +  j \adfmod{8}) + 112$ for $112 \le x < 120$,
$x \mapsto (x +  j \adfmod{2}) + 120$ for $x \ge 120$,
$0 \le j < 56$.
\ADFvfyParStart{(122, ((13, 56, ((112, 2), (8, 1), (2, 1)))), ((2, 56), (10, 1)))} 

\adfDgap
\noindent{\boldmath $ 2^{60} 6^{1} $}~
With the point set $\{0, 1, \dots, 125\}$ partitioned into
 residue classes modulo $60$ for $\{0, 1, \dots, 119\}$, and
 $\{120, 121, \dots, 125\}$,
 the design is generated from

\adfLgap {\adfBfont
$\{15, 81, 4, 83, 10\}$,
$\{39, 123, 56, 22, 109\}$,
$\{80, 73, 102, 77, 8\}$,\adfsplit
$\{20, 102, 124, 98, 105\}$,
$\{62, 54, 52, 34, 43\}$,
$\{0, 15, 55, 93, 122\}$,\adfsplit
$\{0, 12, 36, 63, 68\}$,
$\{0, 14, 30, 59, 80\}$,
$\{0, 1, 21, 37, 74\}$,\adfsplit
$\{0, 25, 33, 39, 57\}$,
$\{0, 23, 49, 62, 97\}$,
$\{0, 19, 31, 41, 75\}$,\adfsplit
$\{0, 13, 43, 44, 105\}$

}
\adfLgap \noindent by the mapping:
$x \mapsto x + 2 j \adfmod{120}$ for $x < 120$,
$x \mapsto (x +  j \adfmod{6}) + 120$ for $x \ge 120$,
$0 \le j < 60$.
\ADFvfyParStart{(126, ((13, 60, ((120, 2), (6, 1)))), ((2, 60), (6, 1)))} 

\adfDgap
\noindent{\boldmath $ 3^{20} 11^{1} $}~
With the point set $\{0, 1, \dots, 70\}$ partitioned into
 residue classes modulo $20$ for $\{0, 1, \dots, 59\}$, and
 $\{60, 61, \dots, 70\}$,
 the design is generated from

\adfLgap {\adfBfont
$\{0, 3, 11, 38, 61\}$,
$\{0, 1, 5, 7, 44\}$,
$\{0, 13, 29, 55, 69\}$,\adfsplit
$\{0, 2, 10, 37, 51\}$,
$\{0, 9, 31, 59, 63\}$,
$\{0, 4, 43, 46, 68\}$,\adfsplit
$\{0, 6, 34, 53, 67\}$,
$\{0, 15, 30, 45, 70\}$,
$\{0, 12, 24, 36, 48\}$,\adfsplit
$\{1, 13, 25, 37, 49\}$

}
\adfLgap \noindent by the mapping:
$x \mapsto x + 2 j \adfmod{60}$ for $x < 60$,
$x \mapsto (x +  j \adfmod{10}) + 60$ for $60 \le x < 70$,
$70 \mapsto 70$,
$0 \le j < 30$
 for the first seven blocks,
$0 \le j < 15$
 for the next block,
$0 \le j < 6$
 for the last two blocks.
\ADFvfyParStart{(71, ((7, 30, ((60, 2), (10, 1), (1, 1))), (1, 15, ((60, 2), (10, 1), (1, 1))), (2, 6, ((60, 2), (10, 1), (1, 1)))), ((3, 20), (11, 1)))} 

\adfDgap
\noindent{\boldmath $ 3^{28} 7^{1} $}~
With the point set $\{0, 1, \dots, 90\}$ partitioned into
 residue classes modulo $28$ for $\{0, 1, \dots, 83\}$, and
 $\{84, 85, \dots, 90\}$,
 the design is generated from

\adfLgap {\adfBfont
$\{0, 60, 9, 82, 50\}$,
$\{9, 15, 19, 49, 58\}$,
$\{0, 1, 8, 59, 78\}$,\adfsplit
$\{0, 19, 57, 71, 79\}$,
$\{0, 16, 53, 55, 90\}$,
$\{0, 3, 38, 64, 69\}$,\adfsplit
$\{0, 12, 27, 30, 86\}$,
$\{0, 47, 67, 83, 88\}$,
$\{0, 4, 17, 29, 40\}$,\adfsplit
$\{0, 21, 42, 63, 84\}$

}
\adfLgap \noindent by the mapping:
$x \mapsto x + 2 j \adfmod{84}$ for $x < 84$,
$x \mapsto (x +  j \adfmod{7}) + 84$ for $x \ge 84$,
$0 \le j < 42$
 for the first nine blocks,
$0 \le j < 21$
 for the last block.
\ADFvfyParStart{(91, ((9, 42, ((84, 2), (7, 1))), (1, 21, ((84, 2), (7, 1)))), ((3, 28), (7, 1)))} 

\adfDgap
\noindent{\boldmath $ 3^{32} 11^{1} $}~
With the point set $\{0, 1, \dots, 106\}$ partitioned into
 residue classes modulo $32$ for $\{0, 1, \dots, 95\}$, and
 $\{96, 97, \dots, 106\}$,
 the design is generated from

\adfLgap {\adfBfont
$\{99, 75, 72, 57, 14\}$,
$\{8, 101, 2, 55, 21\}$,
$\{23, 86, 91, 0, 35\}$,\adfsplit
$\{0, 1, 2, 27, 98\}$,
$\{0, 14, 31, 77, 97\}$,
$\{0, 18, 87, 89, 100\}$,\adfsplit
$\{0, 37, 41, 57, 79\}$,
$\{0, 7, 59, 67, 73\}$,
$\{0, 4, 15, 26, 60\}$,\adfsplit
$\{0, 8, 20, 29, 50\}$,
$\{0, 16, 55, 65, 68\}$,
$\{0, 24, 48, 72, 106\}$,\adfsplit
$\{1, 25, 49, 73, 106\}$

}
\adfLgap \noindent by the mapping:
$x \mapsto x + 2 j \adfmod{96}$ for $x < 96$,
$x \mapsto (x - 96 + 5 j \adfmod{10}) + 96$ for $96 \le x < 106$,
$106 \mapsto 106$,
$0 \le j < 48$
 for the first 11 blocks,
$0 \le j < 12$
 for the last two blocks.
\ADFvfyParStart{(107, ((11, 48, ((96, 2), (10, 5), (1, 1))), (2, 12, ((96, 2), (10, 5), (1, 1)))), ((3, 32), (11, 1)))} 

\adfDgap
\noindent{\boldmath $ 3^{36} 15^{1} $}~
With the point set $\{0, 1, \dots, 122\}$ partitioned into
 residue classes modulo $36$ for $\{0, 1, \dots, 107\}$, and
 $\{108, 109, \dots, 122\}$,
 the design is generated from

\adfLgap {\adfBfont
$\{104, 7, 20, 0, 75\}$,
$\{13, 24, 37, 14, 7\}$,
$\{100, 78, 116, 32, 81\}$,\adfsplit
$\{54, 42, 70, 107, 85\}$,
$\{0, 1, 6, 39, 120\}$,
$\{0, 8, 38, 56, 85\}$,\adfsplit
$\{0, 14, 59, 87, 111\}$,
$\{0, 2, 63, 66, 119\}$,
$\{0, 5, 21, 25, 115\}$,\adfsplit
$\{0, 9, 57, 71, 83\}$,
$\{0, 26, 67, 76, 118\}$,
$\{0, 17, 35, 74, 93\}$,\adfsplit
$\{1, 3, 11, 67, 116\}$,
$\{0, 27, 54, 81, 122\}$

}
\adfLgap \noindent by the mapping:
$x \mapsto x + 2 j \adfmod{108}$ for $x < 108$,
$x \mapsto (x + 2 j \adfmod{12}) + 108$ for $108 \le x < 120$,
$x \mapsto (x +  j \adfmod{2}) + 120$ for $120 \le x < 122$,
$122 \mapsto 122$,
$0 \le j < 54$
 for the first 13 blocks,
$0 \le j < 27$
 for the last block.
\ADFvfyParStart{(123, ((13, 54, ((108, 2), (12, 2), (2, 1), (1, 1))), (1, 27, ((108, 2), (12, 2), (2, 1), (1, 1)))), ((3, 36), (15, 1)))} 

\adfDgap
\noindent{\boldmath $ 5^{24} 25^{1} $}~
With the point set $\{0, 1, \dots, 144\}$ partitioned into
 residue classes modulo $24$ for $\{0, 1, \dots, 119\}$, and
 $\{120, 121, \dots, 144\}$,
 the design is generated from

\adfLgap {\adfBfont
$\{2, 54, 94, 116, 13\}$,
$\{78, 105, 69, 112, 134\}$,
$\{109, 31, 121, 5, 96\}$,\adfsplit
$\{0, 8, 61, 71, 123\}$,
$\{0, 2, 21, 85, 97\}$,
$\{0, 5, 20, 38, 131\}$,\adfsplit
$\{0, 1, 32, 46, 120\}$,
$\{0, 3, 69, 73, 130\}$,
$\{0, 30, 60, 90, 144\}$

}
\adfLgap \noindent by the mapping:
$x \mapsto x +  j \adfmod{120}$ for $x < 120$,
$x \mapsto (x +  j \adfmod{24}) + 120$ for $120 \le x < 144$,
$144 \mapsto 144$,
$0 \le j < 120$
 for the first eight blocks,
$0 \le j < 30$
 for the last block.
\ADFvfyParStart{(145, ((8, 120, ((120, 1), (24, 1), (1, 1))), (1, 30, ((120, 1), (24, 1), (1, 1)))), ((5, 24), (25, 1)))} 

\adfDgap
\noindent{\boldmath $ 5^{28} 25^{1} $}~
With the point set $\{0, 1, \dots, 164\}$ partitioned into
 residue classes modulo $28$ for $\{0, 1, \dots, 139\}$, and
 $\{140, 141, \dots, 164\}$,
 the design is generated from

\adfLgap {\adfBfont
$\{41, 28, 158, 75, 14\}$,
$\{107, 154, 33, 108, 65\}$,
$\{50, 0, 145, 137, 46\}$,\adfsplit
$\{0, 2, 17, 39, 160\}$,
$\{0, 12, 38, 83, 154\}$,
$\{0, 6, 25, 58, 158\}$,\adfsplit
$\{0, 5, 16, 36, 116\}$,
$\{0, 7, 30, 51, 92\}$,
$\{0, 8, 18, 72, 81\}$,\adfsplit
$\{0, 35, 70, 105, 164\}$

}
\adfLgap \noindent by the mapping:
$x \mapsto x +  j \adfmod{140}$ for $x < 140$,
$x \mapsto (x +  j \adfmod{20}) + 140$ for $140 \le x < 160$,
$x \mapsto (x +  j \adfmod{4}) + 160$ for $160 \le x < 164$,
$164 \mapsto 164$,
$0 \le j < 140$
 for the first nine blocks,
$0 \le j < 35$
 for the last block.
\ADFvfyParStart{(165, ((9, 140, ((140, 1), (20, 1), (4, 1), (1, 1))), (1, 35, ((140, 1), (20, 1), (4, 1), (1, 1)))), ((5, 28), (25, 1)))} 

\adfDgap
\noindent{\boldmath $ 5^{32} 25^{1} $}~
With the point set $\{0, 1, \dots, 184\}$ partitioned into
 residue classes modulo $32$ for $\{0, 1, \dots, 159\}$, and
 $\{160, 161, \dots, 184\}$,
 the design is generated from

\adfLgap {\adfBfont
$\{35, 27, 136, 171, 2\}$,
$\{75, 90, 104, 22, 176\}$,
$\{69, 129, 50, 123, 107\}$,\adfsplit
$\{0, 1, 3, 10, 180\}$,
$\{0, 11, 31, 86, 130\}$,
$\{0, 12, 36, 83, 133\}$,\adfsplit
$\{0, 21, 58, 93, 171\}$,
$\{0, 28, 70, 104, 167\}$,
$\{0, 5, 23, 117, 165\}$,\adfsplit
$\{0, 4, 17, 69, 115\}$,
$\{0, 40, 80, 120, 184\}$

}
\adfLgap \noindent by the mapping:
$x \mapsto x +  j \adfmod{160}$ for $x < 160$,
$x \mapsto (x +  j \adfmod{20}) + 160$ for $160 \le x < 180$,
$x \mapsto (x +  j \adfmod{4}) + 180$ for $180 \le x < 184$,
$184 \mapsto 184$,
$0 \le j < 160$
 for the first ten blocks,
$0 \le j < 40$
 for the last block.
\ADFvfyParStart{(185, ((10, 160, ((160, 1), (20, 1), (4, 1), (1, 1))), (1, 40, ((160, 1), (20, 1), (4, 1), (1, 1)))), ((5, 32), (25, 1)))} 

\adfDgap
\noindent{\boldmath $ 5^{32} 45^{1} $}~
With the point set $\{0, 1, \dots, 204\}$ partitioned into
 residue classes modulo $32$ for $\{0, 1, \dots, 159\}$, and
 $\{160, 161, \dots, 204\}$,
 the design is generated from

\adfLgap {\adfBfont
$\{169, 28, 2, 16, 88\}$,
$\{186, 137, 94, 89, 124\}$,
$\{139, 173, 28, 56, 80\}$,\adfsplit
$\{95, 170, 32, 26, 116\}$,
$\{0, 1, 3, 10, 200\}$,
$\{0, 4, 15, 109, 180\}$,\adfsplit
$\{0, 8, 46, 62, 93\}$,
$\{0, 18, 41, 133, 188\}$,
$\{0, 19, 53, 135, 171\}$,\adfsplit
$\{0, 17, 56, 127, 166\}$,
$\{0, 20, 81, 123, 163\}$,
$\{0, 22, 58, 87, 186\}$,\adfsplit
$\{0, 40, 80, 120, 204\}$

}
\adfLgap \noindent by the mapping:
$x \mapsto x +  j \adfmod{160}$ for $x < 160$,
$x \mapsto (x +  j \adfmod{40}) + 160$ for $160 \le x < 200$,
$x \mapsto (x +  j \adfmod{4}) + 200$ for $200 \le x < 204$,
$204 \mapsto 204$,
$0 \le j < 160$
 for the first 12 blocks,
$0 \le j < 40$
 for the last block.
\ADFvfyParStart{(205, ((12, 160, ((160, 1), (40, 1), (4, 1), (1, 1))), (1, 40, ((160, 1), (40, 1), (4, 1), (1, 1)))), ((5, 32), (45, 1)))} 

\adfDgap
\noindent{\boldmath $ 5^{36} 45^{1} $}~
With the point set $\{0, 1, \dots, 224\}$ partitioned into
 residue classes modulo $36$ for $\{0, 1, \dots, 179\}$, and
 $\{180, 181, \dots, 224\}$,
 the design is generated from

\adfLgap {\adfBfont
$\{99, 71, 104, 57, 34\}$,
$\{9, 73, 88, 219, 164\}$,
$\{99, 130, 190, 16, 67\}$,\adfsplit
$\{75, 86, 214, 8, 92\}$,
$\{96, 109, 2, 28, 185\}$,
$\{0, 1, 4, 160, 162\}$,\adfsplit
$\{0, 35, 93, 141, 223\}$,
$\{0, 10, 59, 134, 203\}$,
$\{0, 8, 60, 100, 207\}$,\adfsplit
$\{0, 30, 71, 125, 186\}$,
$\{0, 9, 38, 136, 209\}$,
$\{0, 16, 43, 77, 185\}$,\adfsplit
$\{0, 7, 118, 130, 222\}$,
$\{0, 45, 90, 135, 180\}$

}
\adfLgap \noindent by the mapping:
$x \mapsto x +  j \adfmod{180}$ for $x < 180$,
$x \mapsto (x +  j \adfmod{45}) + 180$ for $x \ge 180$,
$0 \le j < 180$
 for the first 13 blocks,
$0 \le j < 45$
 for the last block.
\ADFvfyParStart{(225, ((13, 180, ((180, 1), (45, 1))), (1, 45, ((180, 1), (45, 1)))), ((5, 36), (45, 1)))} 

\adfDgap
\noindent{\boldmath $ 5^{40} 45^{1} $}~
With the point set $\{0, 1, \dots, 244\}$ partitioned into
 residue classes modulo $40$ for $\{0, 1, \dots, 199\}$, and
 $\{200, 201, \dots, 244\}$,
 the design is generated from

\adfLgap {\adfBfont
$\{100, 67, 95, 19, 199\}$,
$\{230, 112, 88, 70, 186\}$,
$\{98, 88, 61, 104, 210\}$,\adfsplit
$\{243, 153, 6, 75, 36\}$,
$\{75, 0, 94, 12, 23\}$,
$\{213, 125, 34, 36, 94\}$,\adfsplit
$\{0, 4, 17, 107, 205\}$,
$\{0, 1, 46, 55, 231\}$,
$\{0, 15, 47, 88, 221\}$,\adfsplit
$\{0, 8, 34, 70, 95\}$,
$\{0, 3, 38, 67, 210\}$,
$\{0, 7, 92, 151, 227\}$,\adfsplit
$\{0, 14, 65, 86, 209\}$,
$\{0, 22, 79, 156, 233\}$,
$\{0, 50, 100, 150, 244\}$

}
\adfLgap \noindent by the mapping:
$x \mapsto x +  j \adfmod{200}$ for $x < 200$,
$x \mapsto (x +  j \adfmod{40}) + 200$ for $200 \le x < 240$,
$x \mapsto (x +  j \adfmod{4}) + 240$ for $240 \le x < 244$,
$244 \mapsto 244$,
$0 \le j < 200$
 for the first 14 blocks,
$0 \le j < 50$
 for the last block.
\ADFvfyParStart{(245, ((14, 200, ((200, 1), (40, 1), (4, 1), (1, 1))), (1, 50, ((200, 1), (40, 1), (4, 1), (1, 1)))), ((5, 40), (45, 1)))} 

\adfDgap
\noindent{\boldmath $ 5^{44} 25^{1} $}~
With the point set $\{0, 1, \dots, 244\}$ partitioned into
 residue classes modulo $44$ for $\{0, 1, \dots, 219\}$, and
 $\{220, 221, \dots, 244\}$,
 the design is generated from

\adfLgap {\adfBfont
$\{48, 152, 234, 101, 49\}$,
$\{20, 181, 213, 144, 38\}$,
$\{216, 27, 93, 118, 135\}$,\adfsplit
$\{14, 148, 158, 184, 156\}$,
$\{142, 119, 231, 176, 17\}$,
$\{0, 3, 9, 14, 240\}$,\adfsplit
$\{0, 22, 62, 127, 162\}$,
$\{0, 13, 33, 54, 146\}$,
$\{0, 4, 16, 64, 153\}$,\adfsplit
$\{0, 15, 99, 145, 223\}$,
$\{0, 7, 63, 101, 148\}$,
$\{0, 29, 68, 138, 239\}$,\adfsplit
$\{0, 19, 49, 196, 236\}$,
$\{0, 55, 110, 165, 244\}$

}
\adfLgap \noindent by the mapping:
$x \mapsto x +  j \adfmod{220}$ for $x < 220$,
$x \mapsto (x +  j \adfmod{20}) + 220$ for $220 \le x < 240$,
$x \mapsto (x +  j \adfmod{4}) + 240$ for $240 \le x < 244$,
$244 \mapsto 244$,
$0 \le j < 220$
 for the first 13 blocks,
$0 \le j < 55$
 for the last block.
\ADFvfyParStart{(245, ((13, 220, ((220, 1), (20, 1), (4, 1), (1, 1))), (1, 55, ((220, 1), (20, 1), (4, 1), (1, 1)))), ((5, 44), (25, 1)))} 

\adfDgap
\noindent{\boldmath $ 5^{44} 45^{1} $}~
With the point set $\{0, 1, \dots, 264\}$ partitioned into
 residue classes modulo $44$ for $\{0, 1, \dots, 219\}$, and
 $\{220, 221, \dots, 264\}$,
 the design is generated from

\adfLgap {\adfBfont
$\{238, 107, 38, 100, 124\}$,
$\{144, 135, 226, 109, 104\}$,
$\{54, 151, 223, 214, 102\}$,\adfsplit
$\{4, 72, 260, 33, 1\}$,
$\{64, 164, 250, 202, 148\}$,
$\{258, 218, 11, 21, 135\}$,\adfsplit
$\{230, 134, 123, 53, 33\}$,
$\{0, 12, 65, 117, 145\}$,
$\{0, 8, 164, 178, 233\}$,\adfsplit
$\{0, 30, 89, 146, 252\}$,
$\{0, 15, 37, 73, 235\}$,
$\{0, 4, 111, 129, 240\}$,\adfsplit
$\{0, 33, 67, 118, 159\}$,
$\{0, 1, 77, 79, 122\}$,
$\{0, 6, 25, 72, 199\}$,\adfsplit
$\{0, 55, 110, 165, 264\}$

}
\adfLgap \noindent by the mapping:
$x \mapsto x +  j \adfmod{220}$ for $x < 220$,
$x \mapsto (x +  j \adfmod{44}) + 220$ for $220 \le x < 264$,
$264 \mapsto 264$,
$0 \le j < 220$
 for the first 15 blocks,
$0 \le j < 55$
 for the last block.
\ADFvfyParStart{(265, ((15, 220, ((220, 1), (44, 1), (1, 1))), (1, 55, ((220, 1), (44, 1), (1, 1)))), ((5, 44), (45, 1)))} 

\adfDgap
\noindent{\boldmath $ 6^{16} 10^{1} $}~
With the point set $\{0, 1, \dots, 105\}$ partitioned into
 residue classes modulo $16$ for $\{0, 1, \dots, 95\}$, and
 $\{96, 97, \dots, 105\}$,
 the design is generated from

\adfLgap {\adfBfont
$\{105, 17, 83, 38, 28\}$,
$\{20, 99, 63, 48, 94\}$,
$\{95, 58, 60, 11, 84\}$,\adfsplit
$\{0, 1, 4, 7, 40\}$,
$\{0, 9, 18, 52, 103\}$,
$\{0, 29, 31, 51, 58\}$,\adfsplit
$\{0, 12, 66, 83, 91\}$,
$\{0, 5, 6, 33, 88\}$,
$\{0, 19, 61, 76, 96\}$,\adfsplit
$\{1, 11, 35, 61, 102\}$,
$\{0, 21, 59, 73, 77\}$

}
\adfLgap \noindent by the mapping:
$x \mapsto x + 2 j \adfmod{96}$ for $x < 96$,
$x \mapsto (x +  j \adfmod{8}) + 96$ for $96 \le x < 104$,
$x \mapsto (x +  j \adfmod{2}) + 104$ for $x \ge 104$,
$0 \le j < 48$.
\ADFvfyParStart{(106, ((11, 48, ((96, 2), (8, 1), (2, 1)))), ((6, 16), (10, 1)))} 

\adfDgap
\noindent{\boldmath $ 8^{8} 12^{1} $}~
With the point set $\{0, 1, \dots, 75\}$ partitioned into
 residue classes modulo $8$ for $\{0, 1, \dots, 63\}$, and
 $\{64, 65, \dots, 75\}$,
 the design is generated from

\adfLgap {\adfBfont
$\{49, 48, 72, 18, 31\}$,
$\{55, 48, 50, 12, 69\}$,
$\{20, 26, 29, 40, 68\}$,\adfsplit
$\{0, 4, 19, 29, 41\}$

}
\adfLgap \noindent by the mapping:
$x \mapsto x +  j \adfmod{64}$ for $x < 64$,
$x \mapsto (x +  j \adfmod{8}) + 64$ for $64 \le x < 72$,
$x \mapsto (x +  j \adfmod{4}) + 72$ for $x \ge 72$,
$0 \le j < 64$.
\ADFvfyParStart{(76, ((4, 64, ((64, 1), (8, 1), (4, 1)))), ((8, 8), (12, 1)))} 

\adfDgap
\noindent{\boldmath $ 8^{10} 16^{1} $}~
With the point set $\{0, 1, \dots, 95\}$ partitioned into
 residue classes modulo $10$ for $\{0, 1, \dots, 79\}$, and
 $\{80, 81, \dots, 95\}$,
 the design is generated from

\adfLgap {\adfBfont
$\{0, 1, 4, 6, 23\}$,
$\{0, 8, 42, 51, 90\}$,
$\{0, 11, 26, 44, 89\}$,\adfsplit
$\{0, 7, 21, 56, 91\}$,
$\{0, 12, 25, 53, 81\}$,
$\{0, 16, 32, 48, 64\}$

}
\adfLgap \noindent by the mapping:
$x \mapsto x +  j \adfmod{80}$ for $x < 80$,
$x \mapsto (x +  j \adfmod{16}) + 80$ for $x \ge 80$,
$0 \le j < 80$
 for the first five blocks,
$0 \le j < 16$
 for the last block.
\ADFvfyParStart{(96, ((5, 80, ((80, 1), (16, 1))), (1, 16, ((80, 1), (16, 1)))), ((8, 10), (16, 1)))} 

\adfDgap
\noindent{\boldmath $ 8^{10} 20^{1} $}~
With the point set $\{0, 1, \dots, 99\}$ partitioned into
 residue classes modulo $10$ for $\{0, 1, \dots, 79\}$, and
 $\{80, 81, \dots, 99\}$,
 the design is generated from

\adfLgap {\adfBfont
$\{90, 78, 79, 50, 76\}$,
$\{86, 50, 6, 55, 17\}$,
$\{63, 46, 11, 40, 29\}$,\adfsplit
$\{0, 4, 12, 79, 96\}$,
$\{0, 7, 14, 33, 84\}$,
$\{0, 18, 56, 77, 88\}$,\adfsplit
$\{0, 9, 25, 34, 86\}$,
$\{0, 22, 53, 65, 98\}$,
$\{0, 15, 37, 39, 82\}$,\adfsplit
$\{0, 13, 57, 61, 95\}$,
$\{0, 27, 35, 41, 93\}$,
$\{0, 16, 32, 48, 64\}$

}
\adfLgap \noindent by the mapping:
$x \mapsto x + 2 j \adfmod{80}$ for $x < 80$,
$x \mapsto (x +  j \adfmod{20}) + 80$ for $x \ge 80$,
$0 \le j < 40$
 for the first 11 blocks,
$0 \le j < 8$
 for the last block.
\ADFvfyParStart{(100, ((11, 40, ((80, 2), (20, 1))), (1, 8, ((80, 2), (20, 1)))), ((8, 10), (20, 1)))} 

\adfDgap
\noindent{\boldmath $ 8^{14} 28^{1} $}~
With the point set $\{0, 1, \dots, 139\}$ partitioned into
 residue classes modulo $14$ for $\{0, 1, \dots, 111\}$, and
 $\{112, 113, \dots, 139\}$,
 the design is generated from

\adfLgap {\adfBfont
$\{127, 27, 96, 108, 35\}$,
$\{1, 112, 16, 26, 102\}$,
$\{0, 1, 94, 110, 119\}$,\adfsplit
$\{0, 4, 68, 75, 116\}$,
$\{0, 13, 45, 91, 130\}$,
$\{0, 9, 38, 58, 136\}$,\adfsplit
$\{0, 6, 30, 53, 135\}$,
$\{0, 5, 27, 60, 77\}$

}
\adfLgap \noindent by the mapping:
$x \mapsto x +  j \adfmod{112}$ for $x < 112$,
$x \mapsto (x +  j \adfmod{28}) + 112$ for $x \ge 112$,
$0 \le j < 112$.
\ADFvfyParStart{(140, ((8, 112, ((112, 1), (28, 1)))), ((8, 14), (28, 1)))} 

\adfDgap
\noindent{\boldmath $ 8^{15} 4^{1} $}~
With the point set $\{0, 1, \dots, 123\}$ partitioned into
 residue classes modulo $15$ for $\{0, 1, \dots, 119\}$, and
 $\{120, 121, 122, 123\}$,
 the design is generated from

\adfLgap {\adfBfont
$\{101, 53, 15, 51, 9\}$,
$\{23, 1, 112, 2, 120\}$,
$\{0, 4, 43, 61, 68\}$,\adfsplit
$\{0, 3, 17, 29, 100\}$,
$\{0, 8, 41, 54, 73\}$,
$\{0, 5, 16, 40, 67\}$

}
\adfLgap \noindent by the mapping:
$x \mapsto x +  j \adfmod{120}$ for $x < 120$,
$x \mapsto (x +  j \adfmod{4}) + 120$ for $x \ge 120$,
$0 \le j < 120$.
\ADFvfyParStart{(124, ((6, 120, ((120, 1), (4, 1)))), ((8, 15), (4, 1)))} 

\adfDgap
\noindent{\boldmath $ 8^{15} 16^{1} $}~
With the point set $\{0, 1, \dots, 135\}$ partitioned into
 residue classes modulo $15$ for $\{0, 1, \dots, 119\}$, and
 $\{120, 121, \dots, 135\}$,
 the design is generated from

\adfLgap {\adfBfont
$\{0, 1, 4, 6, 110\}$,
$\{0, 7, 39, 47, 56\}$,
$\{0, 12, 62, 89, 122\}$,\adfsplit
$\{0, 18, 53, 86, 128\}$,
$\{0, 26, 55, 92, 121\}$,
$\{0, 13, 36, 57, 82\}$,\adfsplit
$\{0, 19, 78, 98, 131\}$,
$\{0, 24, 48, 72, 96\}$

}
\adfLgap \noindent by the mapping:
$x \mapsto x +  j \adfmod{120}$ for $x < 120$,
$x \mapsto (x - 120 + 2 j \adfmod{16}) + 120$ for $x \ge 120$,
$0 \le j < 120$
 for the first seven blocks;
$x \mapsto x +  j \adfmod{120}$ for $x < 120$,
$x \mapsto (x - 120 + 4 j \adfmod{16}) + 120$ for $x \ge 120$,
$0 \le j < 24$
 for the last block.
\ADFvfyParStart{(136, ((7, 120, ((120, 1), (16, 2))), (1, 24, ((120, 1), (16, 4)))), ((8, 15), (16, 1)))} 

\adfDgap
\noindent{\boldmath $ 8^{15} 24^{1} $}~
With the point set $\{0, 1, \dots, 143\}$ partitioned into
 residue classes modulo $15$ for $\{0, 1, \dots, 119\}$, and
 $\{120, 121, \dots, 143\}$,
 the design is generated from

\adfLgap {\adfBfont
$\{136, 16, 113, 110, 36\}$,
$\{67, 140, 31, 99, 14\}$,
$\{0, 1, 6, 10, 102\}$,\adfsplit
$\{0, 8, 22, 70, 91\}$,
$\{0, 12, 59, 93, 128\}$,
$\{0, 16, 57, 82, 139\}$,\adfsplit
$\{0, 7, 40, 71, 134\}$,
$\{0, 2, 13, 78, 138\}$

}
\adfLgap \noindent by the mapping:
$x \mapsto x +  j \adfmod{120}$ for $x < 120$,
$x \mapsto (x +  j \adfmod{24}) + 120$ for $x \ge 120$,
$0 \le j < 120$.
\ADFvfyParStart{(144, ((8, 120, ((120, 1), (24, 1)))), ((8, 15), (24, 1)))} 

\adfDgap
\noindent{\boldmath $ 8^{15} 36^{1} $}~
With the point set $\{0, 1, \dots, 155\}$ partitioned into
 residue classes modulo $15$ for $\{0, 1, \dots, 119\}$, and
 $\{120, 121, \dots, 155\}$,
 the design is generated from

\adfLgap {\adfBfont
$\{80, 58, 155, 119, 54\}$,
$\{74, 24, 129, 66, 101\}$,
$\{0, 1, 3, 114, 154\}$,\adfsplit
$\{0, 14, 33, 54, 152\}$,
$\{0, 5, 36, 88, 125\}$,
$\{0, 11, 62, 82, 128\}$,\adfsplit
$\{0, 10, 63, 86, 136\}$,
$\{0, 17, 46, 64, 132\}$,
$\{0, 12, 25, 104, 131\}$,\adfsplit
$\{0, 24, 48, 72, 96\}$

}
\adfLgap \noindent by the mapping:
$x \mapsto x +  j \adfmod{120}$ for $x < 120$,
$x \mapsto (x +  j \adfmod{24}) + 120$ for $120 \le x < 144$,
$x \mapsto (x +  j \adfmod{12}) + 144$ for $x \ge 144$,
$0 \le j < 120$
 for the first nine blocks,
$0 \le j < 24$
 for the last block.
\ADFvfyParStart{(156, ((9, 120, ((120, 1), (24, 1), (12, 1))), (1, 24, ((120, 1), (24, 1), (12, 1)))), ((8, 15), (36, 1)))} 

\adfDgap
\noindent{\boldmath $ 8^{16} 20^{1} $}~
With the point set $\{0, 1, \dots, 147\}$ partitioned into
 residue classes modulo $16$ for $\{0, 1, \dots, 127\}$, and
 $\{128, 129, \dots, 147\}$,
 the design is generated from

\adfLgap {\adfBfont
$\{21, 31, 76, 89, 27\}$,
$\{21, 74, 105, 139, 54\}$,
$\{0, 1, 3, 110, 144\}$,\adfsplit
$\{0, 8, 71, 101, 131\}$,
$\{0, 9, 34, 90, 137\}$,
$\{0, 5, 17, 59, 105\}$,\adfsplit
$\{0, 11, 26, 50, 87\}$,
$\{0, 7, 92, 106, 132\}$

}
\adfLgap \noindent by the mapping:
$x \mapsto x +  j \adfmod{128}$ for $x < 128$,
$x \mapsto (x +  j \adfmod{16}) + 128$ for $128 \le x < 144$,
$x \mapsto (x +  j \adfmod{4}) + 144$ for $x \ge 144$,
$0 \le j < 128$.
\ADFvfyParStart{(148, ((8, 128, ((128, 1), (16, 1), (4, 1)))), ((8, 16), (20, 1)))} 

\adfDgap
\noindent{\boldmath $ 8^{17} 16^{1} $}~
With the point set $\{0, 1, \dots, 151\}$ partitioned into
 residue classes modulo $17$ for $\{0, 1, \dots, 135\}$, and
 $\{136, 137, \dots, 151\}$,
 the design is generated from

\adfLgap {\adfBfont
$\{82, 29, 94, 18, 126\}$,
$\{60, 24, 102, 87, 40\}$,
$\{0, 2, 5, 24, 45\}$,\adfsplit
$\{0, 8, 38, 56, 69\}$,
$\{0, 7, 57, 66, 140\}$,
$\{0, 1, 26, 55, 148\}$,\adfsplit
$\{0, 4, 103, 126, 137\}$,
$\{0, 6, 52, 101, 143\}$

}
\adfLgap \noindent by the mapping:
$x \mapsto x +  j \adfmod{136}$ for $x < 136$,
$x \mapsto (x - 136 + 2 j \adfmod{16}) + 136$ for $x \ge 136$,
$0 \le j < 136$.
\ADFvfyParStart{(152, ((8, 136, ((136, 1), (16, 2)))), ((8, 17), (16, 1)))} 

\adfDgap
\noindent{\boldmath $ 8^{18} 32^{1} $}~
With the point set $\{0, 1, \dots, 175\}$ partitioned into
 residue classes modulo $18$ for $\{0, 1, \dots, 143\}$, and
 $\{144, 145, \dots, 175\}$,
 the design is generated from

\adfLgap {\adfBfont
$\{60, 54, 100, 156, 135\}$,
$\{162, 11, 60, 77, 3\}$,
$\{45, 116, 139, 25, 155\}$,\adfsplit
$\{0, 1, 3, 27, 152\}$,
$\{0, 13, 51, 113, 160\}$,
$\{0, 10, 32, 96, 111\}$,\adfsplit
$\{0, 9, 21, 37, 76\}$,
$\{0, 4, 29, 88, 167\}$,
$\{0, 5, 19, 102, 155\}$,\adfsplit
$\{0, 7, 41, 52, 171\}$

}
\adfLgap \noindent by the mapping:
$x \mapsto x +  j \adfmod{144}$ for $x < 144$,
$x \mapsto (x - 144 + 2 j \adfmod{32}) + 144$ for $x \ge 144$,
$0 \le j < 144$.
\ADFvfyParStart{(176, ((10, 144, ((144, 1), (32, 2)))), ((8, 18), (32, 1)))} 

\adfDgap
\noindent{\boldmath $ 8^{20} 44^{1} $}~
With the point set $\{0, 1, \dots, 203\}$ partitioned into
 residue classes modulo $20$ for $\{0, 1, \dots, 159\}$, and
 $\{160, 161, \dots, 203\}$,
 the design is generated from

\adfLgap {\adfBfont
$\{98, 87, 68, 183, 84\}$,
$\{59, 81, 116, 162, 15\}$,
$\{22, 86, 53, 159, 77\}$,\adfsplit
$\{174, 126, 134, 152, 73\}$,
$\{43, 201, 85, 0, 154\}$,
$\{0, 1, 29, 113, 164\}$,\adfsplit
$\{0, 2, 67, 88, 180\}$,
$\{0, 25, 62, 133, 170\}$,
$\{0, 4, 50, 147, 177\}$,\adfsplit
$\{0, 12, 70, 104, 186\}$,
$\{0, 5, 41, 150, 199\}$,
$\{0, 7, 39, 122, 191\}$

}
\adfLgap \noindent by the mapping:
$x \mapsto x +  j \adfmod{160}$ for $x < 160$,
$x \mapsto (x +  j \adfmod{40}) + 160$ for $160 \le x < 200$,
$x \mapsto (x +  j \adfmod{4}) + 200$ for $x \ge 200$,
$0 \le j < 160$.
\ADFvfyParStart{(204, ((12, 160, ((160, 1), (40, 1), (4, 1)))), ((8, 20), (44, 1)))} 

\adfDgap
\noindent{\boldmath $ 8^{21} 20^{1} $}~
With the point set $\{0, 1, \dots, 187\}$ partitioned into
 residue classes modulo $21$ for $\{0, 1, \dots, 167\}$, and
 $\{168, 169, \dots, 187\}$,
 the design is generated from

\adfLgap {\adfBfont
$\{127, 171, 77, 36, 42\}$,
$\{145, 58, 180, 136, 11\}$,
$\{48, 0, 33, 29, 103\}$,\adfsplit
$\{0, 1, 3, 146, 172\}$,
$\{0, 5, 18, 67, 175\}$,
$\{0, 7, 38, 89, 184\}$,\adfsplit
$\{0, 11, 27, 122, 142\}$,
$\{0, 12, 64, 92, 109\}$,
$\{0, 10, 40, 54, 112\}$,\adfsplit
$\{0, 8, 32, 68, 107\}$

}
\adfLgap \noindent by the mapping:
$x \mapsto x +  j \adfmod{168}$ for $x < 168$,
$x \mapsto (x - 168 + 2 j \adfmod{16}) + 168$ for $168 \le x < 184$,
$x \mapsto (x +  j \adfmod{4}) + 184$ for $x \ge 184$,
$0 \le j < 168$.
\ADFvfyParStart{(188, ((10, 168, ((168, 1), (16, 2), (4, 1)))), ((8, 21), (20, 1)))} 

\adfDgap
\noindent{\boldmath $ 8^{21} 40^{1} $}~
With the point set $\{0, 1, \dots, 207\}$ partitioned into
 residue classes modulo $21$ for $\{0, 1, \dots, 167\}$, and
 $\{168, 169, \dots, 207\}$,
 the design is generated from

\adfLgap {\adfBfont
$\{178, 42, 76, 70, 122\}$,
$\{116, 191, 138, 85, 21\}$,
$\{88, 174, 58, 47, 45\}$,\adfsplit
$\{142, 189, 125, 109, 55\}$,
$\{0, 1, 103, 165, 192\}$,
$\{0, 7, 57, 86, 204\}$,\adfsplit
$\{0, 12, 48, 72, 109\}$,
$\{0, 8, 27, 47, 85\}$,
$\{0, 10, 25, 100, 181\}$,\adfsplit
$\{0, 18, 67, 112, 175\}$,
$\{0, 23, 92, 136, 194\}$,
$\{0, 5, 14, 40, 191\}$

}
\adfLgap \noindent by the mapping:
$x \mapsto x +  j \adfmod{168}$ for $x < 168$,
$x \mapsto (x - 168 + 3 j \adfmod{36}) + 168$ for $168 \le x < 204$,
$x \mapsto (x +  j \adfmod{4}) + 204$ for $x \ge 204$,
$0 \le j < 168$.
\ADFvfyParStart{(208, ((12, 168, ((168, 1), (36, 3), (4, 1)))), ((8, 21), (40, 1)))} 

\adfDgap
\noindent{\boldmath $ 8^{22} 16^{1} $}~
With the point set $\{0, 1, \dots, 191\}$ partitioned into
 residue classes modulo $22$ for $\{0, 1, \dots, 175\}$, and
 $\{176, 177, \dots, 191\}$,
 the design is generated from

\adfLgap {\adfBfont
$\{191, 130, 134, 128, 107\}$,
$\{62, 112, 77, 15, 44\}$,
$\{152, 57, 89, 37, 106\}$,\adfsplit
$\{0, 1, 8, 11, 104\}$,
$\{0, 5, 36, 75, 123\}$,
$\{0, 16, 57, 76, 121\}$,\adfsplit
$\{0, 14, 42, 98, 136\}$,
$\{0, 13, 37, 146, 179\}$,
$\{0, 25, 59, 150, 176\}$,\adfsplit
$\{0, 9, 99, 111, 187\}$

}
\adfLgap \noindent by the mapping:
$x \mapsto x +  j \adfmod{176}$ for $x < 176$,
$x \mapsto (x +  j \adfmod{16}) + 176$ for $x \ge 176$,
$0 \le j < 176$.
\ADFvfyParStart{(192, ((10, 176, ((176, 1), (16, 1)))), ((8, 22), (16, 1)))} 

\adfDgap
\noindent{\boldmath $ 8^{22} 36^{1} $}~
With the point set $\{0, 1, \dots, 211\}$ partitioned into
 residue classes modulo $22$ for $\{0, 1, \dots, 175\}$, and
 $\{176, 177, \dots, 211\}$,
 the design is generated from

\adfLgap {\adfBfont
$\{40, 88, 91, 38, 130\}$,
$\{114, 49, 105, 31, 205\}$,
$\{101, 198, 162, 127, 115\}$,\adfsplit
$\{58, 13, 205, 68, 30\}$,
$\{0, 1, 6, 31, 208\}$,
$\{0, 4, 15, 122, 181\}$,\adfsplit
$\{0, 29, 75, 127, 189\}$,
$\{0, 8, 32, 99, 112\}$,
$\{0, 23, 105, 139, 204\}$,\adfsplit
$\{0, 16, 36, 57, 133\}$,
$\{0, 7, 40, 113, 180\}$,
$\{0, 19, 81, 108, 176\}$

}
\adfLgap \noindent by the mapping:
$x \mapsto x +  j \adfmod{176}$ for $x < 176$,
$x \mapsto (x - 176 + 2 j \adfmod{32}) + 176$ for $176 \le x < 208$,
$x \mapsto (x +  j \adfmod{4}) + 208$ for $x \ge 208$,
$0 \le j < 176$.
\ADFvfyParStart{(212, ((12, 176, ((176, 1), (32, 2), (4, 1)))), ((8, 22), (36, 1)))} 

\adfDgap
\noindent{\boldmath $ 8^{23} 32^{1} $}~
With the point set $\{0, 1, \dots, 215\}$ partitioned into
 residue classes modulo $23$ for $\{0, 1, \dots, 183\}$, and
 $\{184, 185, \dots, 215\}$,
 the design is generated from

\adfLgap {\adfBfont
$\{120, 119, 172, 2, 29\}$,
$\{186, 180, 73, 182, 101\}$,
$\{24, 20, 209, 54, 98\}$,\adfsplit
$\{205, 142, 18, 152, 116\}$,
$\{0, 3, 20, 25, 190\}$,
$\{0, 7, 15, 39, 102\}$,\adfsplit
$\{0, 13, 29, 112, 149\}$,
$\{0, 9, 49, 80, 137\}$,
$\{0, 21, 76, 146, 204\}$,\adfsplit
$\{0, 6, 139, 172, 192\}$,
$\{0, 19, 61, 119, 187\}$,
$\{0, 11, 73, 141, 215\}$

}
\adfLgap \noindent by the mapping:
$x \mapsto x +  j \adfmod{184}$ for $x < 184$,
$x \mapsto (x - 184 + 4 j \adfmod{32}) + 184$ for $x \ge 184$,
$0 \le j < 184$.
\ADFvfyParStart{(216, ((12, 184, ((184, 1), (32, 4)))), ((8, 23), (32, 1)))} 

\adfDgap
\noindent{\boldmath $ 8^{24} 28^{1} $}~
With the point set $\{0, 1, \dots, 219\}$ partitioned into
 residue classes modulo $24$ for $\{0, 1, \dots, 191\}$, and
 $\{192, 193, \dots, 219\}$,
 the design is generated from

\adfLgap {\adfBfont
$\{187, 133, 147, 68, 180\}$,
$\{56, 62, 106, 44, 87\}$,
$\{119, 111, 25, 91, 182\}$,\adfsplit
$\{186, 3, 118, 134, 206\}$,
$\{71, 75, 174, 92, 175\}$,
$\{0, 2, 15, 57, 216\}$,\adfsplit
$\{0, 29, 74, 143, 198\}$,
$\{0, 5, 107, 158, 200\}$,
$\{0, 32, 70, 116, 205\}$,\adfsplit
$\{0, 3, 26, 56, 67\}$,
$\{0, 22, 58, 133, 204\}$,
$\{0, 10, 105, 165, 211\}$

}
\adfLgap \noindent by the mapping:
$x \mapsto x +  j \adfmod{192}$ for $x < 192$,
$x \mapsto (x +  j \adfmod{24}) + 192$ for $192 \le x < 216$,
$x \mapsto (x +  j \adfmod{4}) + 216$ for $x \ge 216$,
$0 \le j < 192$.
\ADFvfyParStart{(220, ((12, 192, ((192, 1), (24, 1), (4, 1)))), ((8, 24), (28, 1)))} 

\adfDgap
\noindent{\boldmath $ 8^{25} 4^{1} $}~
With the point set $\{0, 1, \dots, 203\}$ partitioned into
 residue classes modulo $25$ for $\{0, 1, \dots, 199\}$, and
 $\{200, 201, 202, 203\}$,
 the design is generated from

\adfLgap {\adfBfont
$\{61, 123, 191, 58, 177\}$,
$\{126, 154, 132, 152, 9\}$,
$\{127, 42, 131, 95, 173\}$,\adfsplit
$\{0, 1, 30, 35, 200\}$,
$\{0, 9, 27, 91, 107\}$,
$\{0, 13, 58, 79, 153\}$,\adfsplit
$\{0, 8, 19, 52, 160\}$,
$\{0, 15, 38, 101, 144\}$,
$\{0, 7, 17, 104, 176\}$,\adfsplit
$\{0, 12, 49, 88, 139\}$

}
\adfLgap \noindent by the mapping:
$x \mapsto x +  j \adfmod{200}$ for $x < 200$,
$x \mapsto (x +  j \adfmod{4}) + 200$ for $x \ge 200$,
$0 \le j < 200$.
\ADFvfyParStart{(204, ((10, 200, ((200, 1), (4, 1)))), ((8, 25), (4, 1)))} 

\adfDgap
\noindent{\boldmath $ 8^{25} 24^{1} $}~
With the point set $\{0, 1, \dots, 223\}$ partitioned into
 residue classes modulo $25$ for $\{0, 1, \dots, 199\}$, and
 $\{200, 201, \dots, 223\}$,
 the design is generated from

\adfLgap {\adfBfont
$\{67, 86, 68, 183, 121\}$,
$\{178, 59, 217, 68, 12\}$,
$\{168, 154, 37, 26, 63\}$,\adfsplit
$\{77, 163, 14, 183, 50\}$,
$\{159, 205, 181, 24, 183\}$,
$\{0, 3, 10, 161, 220\}$,\adfsplit
$\{0, 5, 21, 129, 141\}$,
$\{0, 6, 46, 79, 107\}$,
$\{0, 8, 23, 52, 112\}$,\adfsplit
$\{0, 4, 74, 122, 217\}$,
$\{0, 13, 30, 68, 200\}$,
$\{0, 32, 77, 134, 208\}$

}
\adfLgap \noindent by the mapping:
$x \mapsto x +  j \adfmod{200}$ for $x < 200$,
$x \mapsto (x +  j \adfmod{20}) + 200$ for $200 \le x < 220$,
$x \mapsto (x +  j \adfmod{4}) + 220$ for $x \ge 220$,
$0 \le j < 200$.
\ADFvfyParStart{(224, ((12, 200, ((200, 1), (20, 1), (4, 1)))), ((8, 25), (24, 1)))} 

\adfDgap
\noindent{\boldmath $ 8^{26} 20^{1} $}~
With the point set $\{0, 1, \dots, 227\}$ partitioned into
 residue classes modulo $26$ for $\{0, 1, \dots, 207\}$, and
 $\{208, 209, \dots, 227\}$,
 the design is generated from

\adfLgap {\adfBfont
$\{216, 58, 130, 112, 1\}$,
$\{222, 17, 139, 34, 5\}$,
$\{164, 68, 27, 115, 70\}$,\adfsplit
$\{201, 173, 31, 176, 195\}$,
$\{55, 203, 199, 78, 71\}$,
$\{0, 9, 30, 115, 224\}$,\adfsplit
$\{0, 5, 51, 82, 152\}$,
$\{0, 1, 40, 195, 210\}$,
$\{0, 10, 65, 181, 213\}$,\adfsplit
$\{0, 8, 67, 99, 135\}$,
$\{0, 11, 35, 69, 119\}$,
$\{0, 15, 48, 90, 110\}$

}
\adfLgap \noindent by the mapping:
$x \mapsto x +  j \adfmod{208}$ for $x < 208$,
$x \mapsto (x +  j \adfmod{16}) + 208$ for $208 \le x < 224$,
$x \mapsto (x +  j \adfmod{4}) + 224$ for $x \ge 224$,
$0 \le j < 208$.
\ADFvfyParStart{(228, ((12, 208, ((208, 1), (16, 1), (4, 1)))), ((8, 26), (20, 1)))} 

\adfDgap
\noindent{\boldmath $ 8^{27} 16^{1} $}~
With the point set $\{0, 1, \dots, 231\}$ partitioned into
 residue classes modulo $27$ for $\{0, 1, \dots, 215\}$, and
 $\{216, 217, \dots, 231\}$,
 the design is generated from

\adfLgap {\adfBfont
$\{133, 29, 73, 120, 195\}$,
$\{49, 198, 23, 126, 6\}$,
$\{127, 75, 109, 38, 43\}$,\adfsplit
$\{206, 212, 198, 177, 70\}$,
$\{136, 26, 187, 111, 220\}$,
$\{0, 1, 39, 138, 228\}$,\adfsplit
$\{0, 2, 48, 59, 63\}$,
$\{0, 16, 36, 118, 183\}$,
$\{0, 12, 40, 70, 123\}$,\adfsplit
$\{0, 3, 119, 209, 226\}$,
$\{0, 19, 42, 87, 143\}$,
$\{0, 9, 31, 95, 219\}$

}
\adfLgap \noindent by the mapping:
$x \mapsto x +  j \adfmod{216}$ for $x < 216$,
$x \mapsto (x +  j \adfmod{12}) + 216$ for $216 \le x < 228$,
$x \mapsto (x +  j \adfmod{4}) + 228$ for $x \ge 228$,
$0 \le j < 216$.
\ADFvfyParStart{(232, ((12, 216, ((216, 1), (12, 1), (4, 1)))), ((8, 27), (16, 1)))} 

\adfDgap
\noindent{\boldmath $ 8^{28} 12^{1} $}~
With the point set $\{0, 1, \dots, 235\}$ partitioned into
 residue classes modulo $28$ for $\{0, 1, \dots, 223\}$, and
 $\{224, 225, \dots, 235\}$,
 the design is generated from

\adfLgap {\adfBfont
$\{136, 105, 86, 206, 54\}$,
$\{235, 68, 221, 2, 115\}$,
$\{228, 146, 220, 144, 143\}$,\adfsplit
$\{48, 75, 36, 207, 153\}$,
$\{200, 86, 222, 11, 5\}$,
$\{0, 4, 126, 137, 231\}$,\adfsplit
$\{0, 10, 48, 103, 144\}$,
$\{0, 18, 42, 86, 181\}$,
$\{0, 8, 33, 67, 97\}$,\adfsplit
$\{0, 15, 52, 73, 109\}$,
$\{0, 16, 79, 124, 141\}$,
$\{0, 9, 23, 49, 69\}$

}
\adfLgap \noindent by the mapping:
$x \mapsto x +  j \adfmod{224}$ for $x < 224$,
$x \mapsto (x +  j \adfmod{8}) + 224$ for $224 \le x < 232$,
$x \mapsto (x +  j \adfmod{4}) + 232$ for $x \ge 232$,
$0 \le j < 224$.
\ADFvfyParStart{(236, ((12, 224, ((224, 1), (8, 1), (4, 1)))), ((8, 28), (12, 1)))} 

\adfDgap
\noindent{\boldmath $ 9^{8} 1^{1} $}~
With the point set $\{0, 1, \dots, 72\}$ partitioned into
 residue classes modulo $8$ for $\{0, 1, \dots, 71\}$, and
 $\{72\}$,
 the design is generated from

\adfLgap {\adfBfont
$\{0, 1, 5, 28, 35\}$,
$\{0, 2, 13, 19, 22\}$,
$\{0, 10, 25, 39, 51\}$,\adfsplit
$\{0, 18, 36, 54, 72\}$

}
\adfLgap \noindent by the mapping:
$x \mapsto x +  j \adfmod{72}$ for $x < 72$,
$72 \mapsto 72$,
$0 \le j < 72$
 for the first three blocks,
$0 \le j < 18$
 for the last block.
\ADFvfyParStart{(73, ((3, 72, ((72, 1), (1, 1))), (1, 18, ((72, 1), (1, 1)))), ((9, 8), (1, 1)))} 

\adfDgap
\noindent{\boldmath $ 9^{12} 13^{1} $}~
With the point set $\{0, 1, \dots, 120\}$ partitioned into
 residue classes modulo $12$ for $\{0, 1, \dots, 107\}$, and
 $\{108, 109, \dots, 120\}$,
 the design is generated from

\adfLgap {\adfBfont
$\{0, 1, 4, 6, 94\}$,
$\{0, 7, 37, 45, 68\}$,
$\{0, 13, 57, 79, 109\}$,\adfsplit
$\{0, 10, 62, 97, 117\}$,
$\{0, 9, 28, 67, 83\}$,
$\{0, 17, 49, 82, 111\}$,\adfsplit
$\{0, 27, 54, 81, 120\}$

}
\adfLgap \noindent by the mapping:
$x \mapsto x +  j \adfmod{108}$ for $x < 108$,
$x \mapsto (x +  j \adfmod{12}) + 108$ for $108 \le x < 120$,
$120 \mapsto 120$,
$0 \le j < 108$
 for the first six blocks,
$0 \le j < 27$
 for the last block.
\ADFvfyParStart{(121, ((6, 108, ((108, 1), (12, 1), (1, 1))), (1, 27, ((108, 1), (12, 1), (1, 1)))), ((9, 12), (13, 1)))} 

\adfDgap
\noindent{\boldmath $ 9^{16} 5^{1} $}~
With the point set $\{0, 1, \dots, 148\}$ partitioned into
 residue classes modulo $16$ for $\{0, 1, \dots, 143\}$, and
 $\{144, 145, 146, 147, 148\}$,
 the design is generated from

\adfLgap {\adfBfont
$\{77, 25, 52, 19, 82\}$,
$\{20, 43, 30, 34, 22\}$,
$\{0, 1, 50, 91, 144\}$,\adfsplit
$\{0, 17, 45, 83, 105\}$,
$\{0, 3, 43, 62, 77\}$,
$\{0, 18, 47, 73, 93\}$,\adfsplit
$\{0, 7, 31, 42, 107\}$,
$\{0, 36, 72, 108, 148\}$

}
\adfLgap \noindent by the mapping:
$x \mapsto x +  j \adfmod{144}$ for $x < 144$,
$x \mapsto (x +  j \adfmod{4}) + 144$ for $144 \le x < 148$,
$148 \mapsto 148$,
$0 \le j < 144$
 for the first seven blocks,
$0 \le j < 36$
 for the last block.
\ADFvfyParStart{(149, ((7, 144, ((144, 1), (4, 1), (1, 1))), (1, 36, ((144, 1), (4, 1), (1, 1)))), ((9, 16), (5, 1)))} 

\adfDgap
\noindent{\boldmath $ 9^{16} 25^{1} $}~
With the point set $\{0, 1, \dots, 168\}$ partitioned into
 residue classes modulo $16$ for $\{0, 1, \dots, 143\}$, and
 $\{144, 145, \dots, 168\}$,
 the design is generated from

\adfLgap {\adfBfont
$\{162, 118, 115, 88, 28\}$,
$\{86, 159, 75, 12, 131\}$,
$\{99, 26, 57, 79, 112\}$,\adfsplit
$\{0, 4, 9, 50, 129\}$,
$\{0, 7, 75, 85, 162\}$,
$\{0, 2, 14, 97, 118\}$,\adfsplit
$\{0, 6, 35, 43, 163\}$,
$\{0, 1, 18, 52, 154\}$,
$\{0, 23, 62, 100, 165\}$,\adfsplit
$\{0, 36, 72, 108, 168\}$

}
\adfLgap \noindent by the mapping:
$x \mapsto x +  j \adfmod{144}$ for $x < 144$,
$x \mapsto (x +  j \adfmod{24}) + 144$ for $144 \le x < 168$,
$168 \mapsto 168$,
$0 \le j < 144$
 for the first nine blocks,
$0 \le j < 36$
 for the last block.
\ADFvfyParStart{(169, ((9, 144, ((144, 1), (24, 1), (1, 1))), (1, 36, ((144, 1), (24, 1), (1, 1)))), ((9, 16), (25, 1)))} 

\adfDgap
\noindent{\boldmath $ 9^{20} 17^{1} $}~
With the point set $\{0, 1, \dots, 196\}$ partitioned into
 residue classes modulo $20$ for $\{0, 1, \dots, 179\}$, and
 $\{180, 181, \dots, 196\}$,
 the design is generated from

\adfLgap {\adfBfont
$\{51, 81, 35, 37, 166\}$,
$\{104, 187, 125, 15, 42\}$,
$\{27, 20, 88, 136, 62\}$,\adfsplit
$\{0, 1, 79, 82, 181\}$,
$\{0, 5, 15, 38, 180\}$,
$\{0, 6, 19, 37, 182\}$,\adfsplit
$\{0, 4, 43, 54, 127\}$,
$\{0, 8, 36, 113, 122\}$,
$\{0, 17, 69, 93, 125\}$,\adfsplit
$\{0, 12, 34, 59, 151\}$,
$\{0, 45, 90, 135, 196\}$

}
\adfLgap \noindent by the mapping:
$x \mapsto x +  j \adfmod{180}$ for $x < 180$,
$x \mapsto (x - 180 + 4 j \adfmod{16}) + 180$ for $180 \le x < 196$,
$196 \mapsto 196$,
$0 \le j < 180$
 for the first ten blocks,
$0 \le j < 45$
 for the last block.
\ADFvfyParStart{(197, ((10, 180, ((180, 1), (16, 4), (1, 1))), (1, 45, ((180, 1), (16, 4), (1, 1)))), ((9, 20), (17, 1)))} 

\adfDgap
\noindent{\boldmath $ 9^{20} 29^{1} $}~
With the point set $\{0, 1, \dots, 208\}$ partitioned into
 residue classes modulo $20$ for $\{0, 1, \dots, 179\}$, and
 $\{180, 181, \dots, 208\}$,
 the design is generated from

\adfLgap {\adfBfont
$\{180, 33, 46, 103, 152\}$,
$\{25, 150, 204, 8, 111\}$,
$\{114, 10, 22, 146, 64\}$,\adfsplit
$\{82, 68, 109, 205, 47\}$,
$\{0, 3, 25, 34, 186\}$,
$\{0, 10, 47, 73, 182\}$,\adfsplit
$\{0, 15, 33, 102, 181\}$,
$\{0, 4, 11, 79, 95\}$,
$\{0, 1, 59, 65, 67\}$,\adfsplit
$\{0, 30, 81, 127, 185\}$,
$\{0, 5, 28, 137, 156\}$,
$\{0, 45, 90, 135, 208\}$,\adfsplit
$\{0, 36, 72, 108, 144\}$

}
\adfLgap \noindent by the mapping:
$x \mapsto x +  j \adfmod{180}$ for $x < 180$,
$x \mapsto (x - 180 + 7 j \adfmod{28}) + 180$ for $180 \le x < 208$,
$208 \mapsto 208$,
$0 \le j < 180$
 for the first 11 blocks,
$0 \le j < 45$
 for the next block,
$0 \le j < 36$
 for the last block.
\ADFvfyParStart{(209, ((11, 180, ((180, 1), (28, 7), (1, 1))), (1, 45, ((180, 1), (28, 7), (1, 1))), (1, 36, ((180, 1), (28, 7), (1, 1)))), ((9, 20), (29, 1)))} 

\adfDgap
\noindent{\boldmath $ 9^{20} 37^{1} $}~
With the point set $\{0, 1, \dots, 216\}$ partitioned into
 residue classes modulo $20$ for $\{0, 1, \dots, 179\}$, and
 $\{180, 181, \dots, 216\}$,
 the design is generated from

\adfLgap {\adfBfont
$\{0, 174, 37, 52, 189\}$,
$\{53, 17, 114, 121, 168\}$,
$\{46, 113, 94, 2, 180\}$,\adfsplit
$\{0, 134, 193, 24, 101\}$,
$\{94, 85, 169, 83, 156\}$,
$\{0, 8, 57, 95, 129\}$,\adfsplit
$\{0, 1, 99, 177, 187\}$,
$\{0, 5, 130, 158, 213\}$,
$\{0, 12, 150, 166, 204\}$,\adfsplit
$\{0, 10, 31, 63, 207\}$,
$\{0, 23, 64, 89, 202\}$,
$\{0, 17, 56, 162, 201\}$,\adfsplit
$\{0, 45, 90, 135, 216\}$

}
\adfLgap \noindent by the mapping:
$x \mapsto x +  j \adfmod{180}$ for $x < 180$,
$x \mapsto (x +  j \adfmod{36}) + 180$ for $180 \le x < 216$,
$216 \mapsto 216$,
$0 \le j < 180$
 for the first 12 blocks,
$0 \le j < 45$
 for the last block.
\ADFvfyParStart{(217, ((12, 180, ((180, 1), (36, 1), (1, 1))), (1, 45, ((180, 1), (36, 1), (1, 1)))), ((9, 20), (37, 1)))} 

\adfDgap
\noindent{\boldmath $ 9^{20} 49^{1} $}~
With the point set $\{0, 1, \dots, 228\}$ partitioned into
 residue classes modulo $20$ for $\{0, 1, \dots, 179\}$, and
 $\{180, 181, \dots, 228\}$,
 the design is generated from

\adfLgap {\adfBfont
$\{66, 60, 114, 64, 165\}$,
$\{59, 219, 26, 82, 75\}$,
$\{118, 115, 215, 87, 21\}$,\adfsplit
$\{74, 193, 95, 28, 10\}$,
$\{69, 86, 190, 160, 108\}$,
$\{0, 5, 13, 76, 223\}$,\adfsplit
$\{0, 1, 27, 62, 227\}$,
$\{0, 10, 133, 148, 192\}$,
$\{0, 12, 41, 65, 209\}$,\adfsplit
$\{0, 14, 84, 121, 199\}$,
$\{0, 11, 69, 161, 195\}$,
$\{0, 38, 93, 136, 181\}$,\adfsplit
$\{0, 9, 34, 112, 211\}$,
$\{0, 45, 90, 135, 228\}$,
$\{0, 36, 72, 108, 144\}$

}
\adfLgap \noindent by the mapping:
$x \mapsto x +  j \adfmod{180}$ for $x < 180$,
$x \mapsto (x +  j \adfmod{36}) + 180$ for $180 \le x < 216$,
$x \mapsto (x +  j \adfmod{12}) + 216$ for $216 \le x < 228$,
$228 \mapsto 228$,
$0 \le j < 180$
 for the first 13 blocks,
$0 \le j < 45$
 for the next block,
$0 \le j < 36$
 for the last block.
\ADFvfyParStart{(229, ((13, 180, ((180, 1), (36, 1), (12, 1), (1, 1))), (1, 45, ((180, 1), (36, 1), (12, 1), (1, 1))), (1, 36, ((180, 1), (36, 1), (12, 1), (1, 1)))), ((9, 20), (49, 1)))} 

\adfDgap
\noindent{\boldmath $ 9^{28} 1^{1} $}~
With the point set $\{0, 1, \dots, 252\}$ partitioned into
 residue classes modulo $28$ for $\{0, 1, \dots, 251\}$, and
 $\{252\}$,
 the design is generated from

\adfLgap {\adfBfont
$\{126, 140, 248, 179, 153\}$,
$\{81, 244, 212, 152, 234\}$,
$\{204, 146, 58, 65, 100\}$,\adfsplit
$\{6, 234, 73, 37, 22\}$,
$\{168, 220, 135, 215, 211\}$,
$\{0, 1, 18, 151, 174\}$,\adfsplit
$\{0, 6, 54, 103, 186\}$,
$\{0, 8, 20, 118, 143\}$,
$\{0, 2, 61, 64, 138\}$,\adfsplit
$\{0, 11, 41, 111, 218\}$,
$\{0, 21, 50, 94, 187\}$,
$\{0, 19, 57, 147, 184\}$,\adfsplit
$\{0, 63, 126, 189, 252\}$

}
\adfLgap \noindent by the mapping:
$x \mapsto x +  j \adfmod{252}$ for $x < 252$,
$252 \mapsto 252$,
$0 \le j < 252$
 for the first 12 blocks,
$0 \le j < 63$
 for the last block.
\ADFvfyParStart{(253, ((12, 252, ((252, 1), (1, 1))), (1, 63, ((252, 1), (1, 1)))), ((9, 28), (1, 1)))} 

\adfDgap
\noindent{\boldmath $ 10^{10} 18^{1} $}~
With the point set $\{0, 1, \dots, 117\}$ partitioned into
 residue classes modulo $9$ for $\{0, 1, \dots, 89\}$,
 $\{90, 91, \dots, 99\}$, and
 $\{100, 101, \dots, 117\}$,
 the design is generated from

\adfLgap {\adfBfont
$\{23, 19, 29, 81, 16\}$,
$\{24, 20, 30, 82, 17\}$,
$\{53, 101, 36, 48, 99\}$,\adfsplit
$\{54, 102, 37, 49, 90\}$,
$\{109, 48, 4, 64, 15\}$,
$\{0, 8, 43, 90, 111\}$,\adfsplit
$\{0, 26, 59, 75, 94\}$,
$\{1, 15, 35, 61, 115\}$,
$\{0, 21, 22, 61, 113\}$,\adfsplit
$\{0, 1, 20, 76, 114\}$,
$\{0, 2, 50, 69, 100\}$,
$\{0, 23, 31, 66, 95\}$,\adfsplit
$\{0, 37, 79, 97, 106\}$,
$\{0, 29, 51, 53, 108\}$

}
\adfLgap \noindent by the mapping:
$x \mapsto x + 2 j \adfmod{90}$ for $x < 90$,
$x \mapsto (x + 2 j \adfmod{10}) + 90$ for $90 \le x < 100$,
$x \mapsto (x - 100 + 2 j \adfmod{18}) + 100$ for $x \ge 100$,
$0 \le j < 45$.
\ADFvfyParStart{(118, ((14, 45, ((90, 2), (10, 2), (18, 2)))), ((10, 9), (10, 1), (18, 1)))} 

\adfDgap
\noindent{\boldmath $ 10^{20} 38^{1} $}~
With the point set $\{0, 1, \dots, 237\}$ partitioned into
 residue classes modulo $19$ for $\{0, 1, \dots, 189\}$,
 $\{190, 191, \dots, 199\}$, and
 $\{200, 201, \dots, 237\}$,
 the design is generated from

\adfLgap {\adfBfont
$\{150, 141, 128, 72, 168\}$,
$\{108, 150, 9, 191, 134\}$,
$\{236, 60, 106, 188, 143\}$,\adfsplit
$\{71, 174, 206, 36, 8\}$,
$\{82, 155, 154, 187, 232\}$,
$\{118, 200, 149, 63, 74\}$,\adfsplit
$\{0, 2, 5, 142, 178\}$,
$\{0, 23, 77, 120, 205\}$,
$\{0, 8, 74, 198, 225\}$,\adfsplit
$\{0, 6, 21, 132, 201\}$,
$\{0, 7, 68, 156, 216\}$,
$\{0, 25, 92, 131, 210\}$,\adfsplit
$\{0, 4, 143, 199, 219\}$,
$\{0, 10, 30, 90, 119\}$

}
\adfLgap \noindent by the mapping:
$x \mapsto x +  j \adfmod{190}$ for $x < 190$,
$x \mapsto (x +  j \adfmod{10}) + 190$ for $190 \le x < 200$,
$x \mapsto (x - 200 +  j \adfmod{38}) + 200$ for $x \ge 200$,
$0 \le j < 190$.
\ADFvfyParStart{(238, ((14, 190, ((190, 1), (10, 1), (38, 1)))), ((10, 19), (10, 1), (38, 1)))} 

\adfDgap
\noindent{\boldmath $ 11^{20} 19^{1} $}~
With the point set $\{0, 1, \dots, 238\}$ partitioned into
 residue classes modulo $19$ for $\{0, 1, \dots, 208\}$,
 $\{209, 210, \dots, 219\}$, and
 $\{220, 221, \dots, 238\}$,
 the design is generated from

\adfLgap {\adfBfont
$\{200, 139, 116, 210, 225\}$,
$\{237, 31, 127, 141, 191\}$,
$\{94, 164, 203, 0, 35\}$,\adfsplit
$\{97, 113, 215, 101, 10\}$,
$\{111, 131, 162, 140, 167\}$,
$\{220, 89, 135, 122, 37\}$,\adfsplit
$\{0, 10, 25, 166, 210\}$,
$\{0, 7, 67, 104, 130\}$,
$\{0, 8, 40, 128, 222\}$,\adfsplit
$\{0, 21, 92, 175, 232\}$,
$\{0, 1, 18, 66, 108\}$,
$\{0, 11, 69, 93, 147\}$,\adfsplit
$\{0, 2, 30, 74, 77\}$

}
\adfLgap \noindent by the mapping:
$x \mapsto x +  j \adfmod{209}$ for $x < 209$,
$x \mapsto (x +  j \adfmod{11}) + 209$ for $209 \le x < 220$,
$x \mapsto (x - 220 +  j \adfmod{19}) + 220$ for $x \ge 220$,
$0 \le j < 209$.
\ADFvfyParStart{(239, ((13, 209, ((209, 1), (11, 1), (19, 1)))), ((11, 19), (11, 1), (19, 1)))} 

\adfDgap
\noindent{\boldmath $ 12^{5} 8^{1} $}~
With the point set $\{0, 1, \dots, 67\}$ partitioned into
 residue classes modulo $5$ for $\{0, 1, \dots, 59\}$, and
 $\{60, 61, \dots, 67\}$,
 the design is generated from

\adfLgap {\adfBfont
$\{0, 2, 49, 51, 64\}$,
$\{0, 1, 7, 33, 59\}$,
$\{0, 4, 38, 41, 57\}$,\adfsplit
$\{0, 18, 19, 26, 32\}$,
$\{0, 9, 13, 31, 62\}$,
$\{0, 3, 6, 17, 65\}$,\adfsplit
$\{0, 8, 22, 29, 60\}$,
$\{0, 11, 42, 58, 61\}$,
$\{1, 18, 22, 39, 55\}$,\adfsplit
$\{1, 2, 30, 53, 66\}$,
$\{0, 16, 39, 43, 67\}$,
$\{1, 15, 34, 43, 61\}$,\adfsplit
$\{0, 12, 24, 36, 48\}$

}
\adfLgap \noindent by the mapping:
$x \mapsto x + 4 j \adfmod{60}$ for $x < 60$,
$x \mapsto (x +  j \adfmod{5}) + 60$ for $60 \le x < 65$,
$x \mapsto (x - 65 +  j \adfmod{3}) + 65$ for $x \ge 65$,
$0 \le j < 15$
 for the first 12 blocks;
$x \mapsto x +  j \adfmod{60}$ for $x < 60$,
$x \mapsto (x +  j \adfmod{5}) + 60$ for $60 \le x < 65$,
$x \mapsto (x - 65 +  j \adfmod{3}) + 65$ for $x \ge 65$,
$0 \le j < 12$
 for the last block.
\ADFvfyParStart{(68, ((12, 15, ((60, 4), (5, 1), (3, 1))), (1, 12, ((60, 1), (5, 1), (3, 1)))), ((12, 5), (8, 1)))} 

\adfDgap
\noindent{\boldmath $ 12^{10} 8^{1} $}~
With the point set $\{0, 1, \dots, 127\}$ partitioned into
 residue classes modulo $10$ for $\{0, 1, \dots, 119\}$, and
 $\{120, 121, \dots, 127\}$,
 the design is generated from

\adfLgap {\adfBfont
$\{21, 125, 79, 46, 64\}$,
$\{105, 124, 66, 107, 112\}$,
$\{0, 8, 19, 36, 107\}$,\adfsplit
$\{0, 1, 23, 27, 86\}$,
$\{0, 3, 12, 54, 68\}$,
$\{0, 6, 37, 53, 82\}$,\adfsplit
$\{0, 24, 48, 72, 96\}$

}
\adfLgap \noindent by the mapping:
$x \mapsto x +  j \adfmod{120}$ for $x < 120$,
$x \mapsto (x +  j \adfmod{8}) + 120$ for $x \ge 120$,
$0 \le j < 120$
 for the first six blocks,
$0 \le j < 24$
 for the last block.
\ADFvfyParStart{(128, ((6, 120, ((120, 1), (8, 1))), (1, 24, ((120, 1), (8, 1)))), ((12, 10), (8, 1)))} 

\adfDgap
\noindent{\boldmath $ 12^{10} 16^{1} $}~
With the point set $\{0, 1, \dots, 135\}$ partitioned into
 residue classes modulo $10$ for $\{0, 1, \dots, 119\}$, and
 $\{120, 121, \dots, 135\}$,
 the design is generated from

\adfLgap {\adfBfont
$\{49, 118, 74, 134, 37\}$,
$\{69, 129, 56, 83, 65\}$,
$\{0, 5, 28, 73, 120\}$,\adfsplit
$\{0, 2, 36, 101, 125\}$,
$\{0, 3, 32, 38, 49\}$,
$\{0, 1, 43, 59, 67\}$,\adfsplit
$\{0, 7, 22, 63, 94\}$

}
\adfLgap \noindent by the mapping:
$x \mapsto x +  j \adfmod{120}$ for $x < 120$,
$x \mapsto (x - 120 + 2 j \adfmod{16}) + 120$ for $x \ge 120$,
$0 \le j < 120$.
\ADFvfyParStart{(136, ((7, 120, ((120, 1), (16, 2)))), ((12, 10), (16, 1)))} 

\adfDgap
\noindent{\boldmath $ 12^{10} 28^{1} $}~
With the point set $\{0, 1, \dots, 147\}$ partitioned into
 residue classes modulo $10$ for $\{0, 1, \dots, 119\}$, and
 $\{120, 121, \dots, 147\}$,
 the design is generated from

\adfLgap {\adfBfont
$\{119, 25, 77, 78, 84\}$,
$\{109, 138, 92, 90, 34\}$,
$\{0, 3, 25, 34, 144\}$,\adfsplit
$\{0, 5, 18, 92, 139\}$,
$\{0, 12, 69, 83, 138\}$,
$\{0, 8, 23, 99, 132\}$,\adfsplit
$\{0, 4, 36, 47, 135\}$,
$\{0, 16, 54, 81, 122\}$,
$\{0, 24, 48, 72, 96\}$

}
\adfLgap \noindent by the mapping:
$x \mapsto x +  j \adfmod{120}$ for $x < 120$,
$x \mapsto (x +  j \adfmod{24}) + 120$ for $120 \le x < 144$,
$x \mapsto (x +  j \adfmod{4}) + 144$ for $x \ge 144$,
$0 \le j < 120$
 for the first eight blocks,
$0 \le j < 24$
 for the last block.
\ADFvfyParStart{(148, ((8, 120, ((120, 1), (24, 1), (4, 1))), (1, 24, ((120, 1), (24, 1), (4, 1)))), ((12, 10), (28, 1)))} 

\adfDgap
\noindent{\boldmath $ 12^{11} 20^{1} $}~
With the point set $\{0, 1, \dots, 151\}$ partitioned into
 residue classes modulo $11$ for $\{0, 1, \dots, 131\}$, and
 $\{132, 133, \dots, 151\}$,
 the design is generated from

\adfLgap {\adfBfont
$\{17, 144, 63, 128, 86\}$,
$\{34, 151, 0, 113, 107\}$,
$\{0, 1, 51, 58, 132\}$,\adfsplit
$\{0, 5, 14, 119, 135\}$,
$\{0, 15, 45, 62, 133\}$,
$\{0, 3, 38, 64, 92\}$,\adfsplit
$\{0, 2, 12, 95, 103\}$,
$\{0, 4, 24, 84, 100\}$

}
\adfLgap \noindent by the mapping:
$x \mapsto x +  j \adfmod{132}$ for $x < 132$,
$x \mapsto (x - 132 + 5 j \adfmod{20}) + 132$ for $x \ge 132$,
$0 \le j < 132$.
\ADFvfyParStart{(152, ((8, 132, ((132, 1), (20, 5)))), ((12, 11), (20, 1)))} 

\adfDgap
\noindent{\boldmath $ 12^{12} 4^{1} $}~
With the point set $\{0, 1, \dots, 147\}$ partitioned into
 residue classes modulo $12$ for $\{0, 1, \dots, 143\}$, and
 $\{144, 145, 146, 147\}$,
 the design is generated from

\adfLgap {\adfBfont
$\{145, 26, 49, 15, 12\}$,
$\{104, 62, 16, 77, 97\}$,
$\{0, 1, 55, 65, 71\}$,\adfsplit
$\{0, 4, 25, 33, 91\}$,
$\{0, 9, 28, 50, 68\}$,
$\{0, 2, 32, 49, 101\}$,\adfsplit
$\{0, 5, 31, 44, 82\}$

}
\adfLgap \noindent by the mapping:
$x \mapsto x +  j \adfmod{144}$ for $x < 144$,
$x \mapsto (x +  j \adfmod{4}) + 144$ for $x \ge 144$,
$0 \le j < 144$.
\ADFvfyParStart{(148, ((7, 144, ((144, 1), (4, 1)))), ((12, 12), (4, 1)))} 

\adfDgap
\noindent{\boldmath $ 12^{12} 24^{1} $}~
With the point set $\{0, 1, \dots, 167\}$ partitioned into
 residue classes modulo $12$ for $\{0, 1, \dots, 143\}$, and
 $\{144, 145, \dots, 167\}$,
 the design is generated from

\adfLgap {\adfBfont
$\{89, 110, 157, 118, 87\}$,
$\{154, 46, 61, 122, 66\}$,
$\{113, 33, 47, 162, 79\}$,\adfsplit
$\{0, 1, 101, 134, 151\}$,
$\{0, 19, 71, 109, 157\}$,
$\{0, 17, 70, 119, 147\}$,\adfsplit
$\{0, 6, 13, 22, 63\}$,
$\{0, 4, 30, 69, 97\}$,
$\{0, 3, 40, 58, 85\}$

}
\adfLgap \noindent by the mapping:
$x \mapsto x +  j \adfmod{144}$ for $x < 144$,
$x \mapsto (x +  j \adfmod{24}) + 144$ for $x \ge 144$,
$0 \le j < 144$.
\ADFvfyParStart{(168, ((9, 144, ((144, 1), (24, 1)))), ((12, 12), (24, 1)))} 

\adfDgap
\noindent{\boldmath $ 12^{13} 8^{1} $}~
With the point set $\{0, 1, \dots, 163\}$ partitioned into
 residue classes modulo $13$ for $\{0, 1, \dots, 155\}$, and
 $\{156, 157, \dots, 163\}$,
 the design is generated from

\adfLgap {\adfBfont
$\{17, 151, 107, 23, 126\}$,
$\{105, 87, 90, 94, 151\}$,
$\{0, 1, 34, 43, 157\}$,\adfsplit
$\{0, 23, 54, 81, 156\}$,
$\{0, 2, 40, 76, 108\}$,
$\{0, 5, 21, 35, 94\}$,\adfsplit
$\{0, 10, 55, 79, 96\}$,
$\{0, 8, 20, 71, 127\}$

}
\adfLgap \noindent by the mapping:
$x \mapsto x +  j \adfmod{156}$ for $x < 156$,
$x \mapsto (x - 156 + 2 j \adfmod{8}) + 156$ for $x \ge 156$,
$0 \le j < 156$.
\ADFvfyParStart{(164, ((8, 156, ((156, 1), (8, 2)))), ((12, 13), (8, 1)))} 

\adfDgap
\noindent{\boldmath $ 12^{13} 28^{1} $}~
With the point set $\{0, 1, \dots, 183\}$ partitioned into
 residue classes modulo $13$ for $\{0, 1, \dots, 155\}$, and
 $\{156, 157, \dots, 183\}$,
 the design is generated from

\adfLgap {\adfBfont
$\{56, 119, 9, 42, 110\}$,
$\{16, 152, 65, 60, 166\}$,
$\{73, 164, 30, 104, 62\}$,\adfsplit
$\{0, 1, 128, 135, 170\}$,
$\{0, 6, 25, 111, 180\}$,
$\{0, 2, 12, 96, 120\}$,\adfsplit
$\{0, 4, 27, 61, 165\}$,
$\{0, 17, 35, 76, 106\}$,
$\{0, 3, 103, 119, 175\}$,\adfsplit
$\{0, 8, 83, 98, 171\}$

}
\adfLgap \noindent by the mapping:
$x \mapsto x +  j \adfmod{156}$ for $x < 156$,
$x \mapsto (x - 156 + 2 j \adfmod{24}) + 156$ for $156 \le x < 180$,
$x \mapsto (x +  j \adfmod{4}) + 180$ for $x \ge 180$,
$0 \le j < 156$.
\ADFvfyParStart{(184, ((10, 156, ((156, 1), (24, 2), (4, 1)))), ((12, 13), (28, 1)))} 

\adfDgap
\noindent{\boldmath $ 12^{14} 32^{1} $}~
With the point set $\{0, 1, \dots, 199\}$ partitioned into
 residue classes modulo $14$ for $\{0, 1, \dots, 167\}$, and
 $\{168, 169, \dots, 199\}$,
 the design is generated from

\adfLgap {\adfBfont
$\{189, 166, 52, 157, 107\}$,
$\{148, 4, 140, 115, 100\}$,
$\{84, 95, 189, 111, 20\}$,\adfsplit
$\{115, 117, 41, 138, 80\}$,
$\{0, 1, 31, 117, 180\}$,
$\{0, 5, 41, 67, 189\}$,\adfsplit
$\{0, 12, 34, 80, 99\}$,
$\{0, 6, 44, 89, 188\}$,
$\{0, 18, 61, 108, 175\}$,\adfsplit
$\{0, 20, 49, 115, 196\}$,
$\{0, 3, 7, 158, 197\}$

}
\adfLgap \noindent by the mapping:
$x \mapsto x +  j \adfmod{168}$ for $x < 168$,
$x \mapsto (x +  j \adfmod{24}) + 168$ for $168 \le x < 192$,
$x \mapsto (x +  j \adfmod{8}) + 192$ for $x \ge 192$,
$0 \le j < 168$.
\ADFvfyParStart{(200, ((11, 168, ((168, 1), (24, 1), (8, 1)))), ((12, 14), (32, 1)))} 

\adfDgap
\noindent{\boldmath $ 12^{15} 8^{1} $}~
With the point set $\{0, 1, \dots, 187\}$ partitioned into
 residue classes modulo $15$ for $\{0, 1, \dots, 179\}$, and
 $\{180, 181, \dots, 187\}$,
 the design is generated from

\adfLgap {\adfBfont
$\{175, 18, 106, 11, 128\}$,
$\{67, 66, 149, 153, 94\}$,
$\{0, 2, 5, 11, 181\}$,\adfsplit
$\{0, 13, 62, 127, 180\}$,
$\{0, 19, 44, 100, 151\}$,
$\{0, 8, 18, 39, 148\}$,\adfsplit
$\{0, 12, 26, 64, 138\}$,
$\{0, 24, 57, 103, 146\}$,
$\{0, 17, 37, 78, 113\}$,\adfsplit
$\{0, 36, 72, 108, 144\}$

}
\adfLgap \noindent by the mapping:
$x \mapsto x +  j \adfmod{180}$ for $x < 180$,
$x \mapsto (x - 180 + 2 j \adfmod{8}) + 180$ for $x \ge 180$,
$0 \le j < 180$
 for the first nine blocks,
$0 \le j < 36$
 for the last block.
\ADFvfyParStart{(188, ((9, 180, ((180, 1), (8, 2))), (1, 36, ((180, 1), (8, 2)))), ((12, 15), (8, 1)))} 

\adfDgap
\noindent{\boldmath $ 12^{15} 16^{1} $}~
With the point set $\{0, 1, \dots, 195\}$ partitioned into
 residue classes modulo $15$ for $\{0, 1, \dots, 179\}$, and
 $\{180, 181, \dots, 195\}$,
 the design is generated from

\adfLgap {\adfBfont
$\{59, 19, 60, 172, 189\}$,
$\{193, 133, 171, 140, 54\}$,
$\{9, 133, 18, 15, 141\}$,\adfsplit
$\{72, 150, 179, 91, 187\}$,
$\{0, 10, 109, 137, 184\}$,
$\{0, 5, 47, 111, 134\}$,\adfsplit
$\{0, 11, 33, 77, 91\}$,
$\{0, 4, 20, 145, 163\}$,
$\{0, 2, 26, 98, 130\}$,\adfsplit
$\{0, 12, 25, 61, 95\}$

}
\adfLgap \noindent by the mapping:
$x \mapsto x +  j \adfmod{180}$ for $x < 180$,
$x \mapsto (x +  j \adfmod{12}) + 180$ for $180 \le x < 192$,
$x \mapsto (x +  j \adfmod{4}) + 192$ for $x \ge 192$,
$0 \le j < 180$.
\ADFvfyParStart{(196, ((10, 180, ((180, 1), (12, 1), (4, 1)))), ((12, 15), (16, 1)))} 

\adfDgap
\noindent{\boldmath $ 12^{15} 28^{1} $}~
With the point set $\{0, 1, \dots, 207\}$ partitioned into
 residue classes modulo $15$ for $\{0, 1, \dots, 179\}$, and
 $\{180, 181, \dots, 207\}$,
 the design is generated from

\adfLgap {\adfBfont
$\{203, 2, 113, 132, 103\}$,
$\{8, 15, 188, 122, 161\}$,
$\{170, 161, 200, 43, 48\}$,\adfsplit
$\{29, 205, 116, 67, 46\}$,
$\{0, 1, 3, 98, 183\}$,
$\{0, 13, 54, 115, 185\}$,\adfsplit
$\{0, 23, 74, 117, 180\}$,
$\{0, 6, 31, 140, 154\}$,
$\{0, 4, 22, 125, 160\}$,\adfsplit
$\{0, 8, 64, 76, 92\}$,
$\{0, 11, 44, 91, 143\}$,
$\{0, 36, 72, 108, 144\}$

}
\adfLgap \noindent by the mapping:
$x \mapsto x +  j \adfmod{180}$ for $x < 180$,
$x \mapsto (x - 180 + 7 j \adfmod{28}) + 180$ for $x \ge 180$,
$0 \le j < 180$
 for the first 11 blocks,
$0 \le j < 36$
 for the last block.
\ADFvfyParStart{(208, ((11, 180, ((180, 1), (28, 7))), (1, 36, ((180, 1), (28, 7)))), ((12, 15), (28, 1)))} 

\adfDgap
\noindent{\boldmath $ 12^{15} 36^{1} $}~
With the point set $\{0, 1, \dots, 215\}$ partitioned into
 residue classes modulo $15$ for $\{0, 1, \dots, 179\}$, and
 $\{180, 181, \dots, 215\}$,
 the design is generated from

\adfLgap {\adfBfont
$\{197, 72, 69, 70, 47\}$,
$\{160, 173, 78, 127, 71\}$,
$\{210, 65, 105, 129, 36\}$,\adfsplit
$\{141, 68, 200, 60, 136\}$,
$\{8, 174, 181, 42, 73\}$,
$\{0, 11, 62, 83, 125\}$,\adfsplit
$\{0, 17, 36, 54, 74\}$,
$\{0, 6, 58, 67, 188\}$,
$\{0, 10, 80, 164, 215\}$,\adfsplit
$\{0, 4, 43, 152, 198\}$,
$\{0, 27, 77, 121, 183\}$,
$\{0, 12, 47, 139, 196\}$

}
\adfLgap \noindent by the mapping:
$x \mapsto x +  j \adfmod{180}$ for $x < 180$,
$x \mapsto (x +  j \adfmod{36}) + 180$ for $x \ge 180$,
$0 \le j < 180$.
\ADFvfyParStart{(216, ((12, 180, ((180, 1), (36, 1)))), ((12, 15), (36, 1)))} 

\adfDgap
\noindent{\boldmath $ 12^{15} 48^{1} $}~
With the point set $\{0, 1, \dots, 227\}$ partitioned into
 residue classes modulo $15$ for $\{0, 1, \dots, 179\}$, and
 $\{180, 181, \dots, 227\}$,
 the design is generated from

\adfLgap {\adfBfont
$\{86, 149, 8, 196, 54\}$,
$\{164, 96, 77, 148, 221\}$,
$\{79, 123, 56, 190, 39\}$,\adfsplit
$\{157, 219, 132, 153, 160\}$,
$\{124, 137, 115, 5, 202\}$,
$\{11, 38, 92, 222, 58\}$,\adfsplit
$\{0, 1, 104, 122, 185\}$,
$\{0, 29, 64, 143, 192\}$,
$\{0, 11, 49, 73, 138\}$,\adfsplit
$\{0, 5, 55, 129, 181\}$,
$\{0, 6, 86, 98, 180\}$,
$\{0, 14, 57, 83, 215\}$,\adfsplit
$\{0, 2, 33, 172, 205\}$,
$\{0, 36, 72, 108, 144\}$

}
\adfLgap \noindent by the mapping:
$x \mapsto x +  j \adfmod{180}$ for $x < 180$,
$x \mapsto (x +  j \adfmod{36}) + 180$ for $180 \le x < 216$,
$x \mapsto (x +  j \adfmod{12}) + 216$ for $x \ge 216$,
$0 \le j < 180$
 for the first 13 blocks,
$0 \le j < 36$
 for the last block.
\ADFvfyParStart{(228, ((13, 180, ((180, 1), (36, 1), (12, 1))), (1, 36, ((180, 1), (36, 1), (12, 1)))), ((12, 15), (48, 1)))} 

\adfDgap
\noindent{\boldmath $ 12^{16} 20^{1} $}~
With the point set $\{0, 1, \dots, 211\}$ partitioned into
 residue classes modulo $16$ for $\{0, 1, \dots, 191\}$, and
 $\{192, 193, \dots, 211\}$,
 the design is generated from

\adfLgap {\adfBfont
$\{193, 106, 109, 79, 37\}$,
$\{7, 209, 109, 96, 134\}$,
$\{45, 34, 171, 163, 88\}$,\adfsplit
$\{204, 35, 87, 182, 66\}$,
$\{0, 1, 34, 36, 207\}$,
$\{0, 37, 87, 136, 200\}$,\adfsplit
$\{0, 12, 40, 59, 151\}$,
$\{0, 7, 17, 77, 148\}$,
$\{0, 5, 67, 106, 124\}$,\adfsplit
$\{0, 4, 24, 82, 170\}$,
$\{0, 6, 15, 29, 113\}$

}
\adfLgap \noindent by the mapping:
$x \mapsto x +  j \adfmod{192}$ for $x < 192$,
$x \mapsto (x +  j \adfmod{16}) + 192$ for $192 \le x < 208$,
$x \mapsto (x +  j \adfmod{4}) + 208$ for $x \ge 208$,
$0 \le j < 192$.
\ADFvfyParStart{(212, ((11, 192, ((192, 1), (16, 1), (4, 1)))), ((12, 16), (20, 1)))} 

\adfDgap
\noindent{\boldmath $ 12^{16} 40^{1} $}~
With the point set $\{0, 1, \dots, 231\}$ partitioned into
 residue classes modulo $16$ for $\{0, 1, \dots, 191\}$, and
 $\{192, 193, \dots, 231\}$,
 the design is generated from

\adfLgap {\adfBfont
$\{11, 53, 203, 121, 42\}$,
$\{2, 228, 36, 150, 131\}$,
$\{95, 103, 90, 164, 83\}$,\adfsplit
$\{63, 206, 101, 3, 91\}$,
$\{153, 96, 215, 180, 139\}$,
$\{152, 87, 153, 223, 15\}$,\adfsplit
$\{0, 6, 153, 162, 229\}$,
$\{0, 17, 109, 134, 202\}$,
$\{0, 15, 37, 77, 136\}$,\adfsplit
$\{0, 2, 103, 107, 220\}$,
$\{0, 18, 53, 159, 205\}$,
$\{0, 3, 49, 70, 197\}$,\adfsplit
$\{0, 23, 47, 73, 163\}$

}
\adfLgap \noindent by the mapping:
$x \mapsto x +  j \adfmod{192}$ for $x < 192$,
$x \mapsto (x +  j \adfmod{32}) + 192$ for $192 \le x < 224$,
$x \mapsto (x +  j \adfmod{8}) + 224$ for $x \ge 224$,
$0 \le j < 192$.
\ADFvfyParStart{(232, ((13, 192, ((192, 1), (32, 1), (8, 1)))), ((12, 16), (40, 1)))} 

\adfDgap
\noindent{\boldmath $ 12^{17} 4^{1} $}~
With the point set $\{0, 1, \dots, 207\}$ partitioned into
 residue classes modulo $17$ for $\{0, 1, \dots, 203\}$, and
 $\{204, 205, 206, 207\}$,
 the design is generated from

\adfLgap {\adfBfont
$\{136, 64, 203, 52, 12\}$,
$\{81, 42, 4, 11, 167\}$,
$\{152, 107, 106, 103, 133\}$,\adfsplit
$\{200, 59, 204, 150, 5\}$,
$\{0, 15, 35, 71, 161\}$,
$\{0, 11, 25, 98, 160\}$,\adfsplit
$\{0, 16, 37, 112, 144\}$,
$\{0, 2, 10, 111, 140\}$,
$\{0, 6, 28, 89, 122\}$,\adfsplit
$\{0, 5, 23, 47, 104\}$

}
\adfLgap \noindent by the mapping:
$x \mapsto x +  j \adfmod{204}$ for $x < 204$,
$x \mapsto (x +  j \adfmod{4}) + 204$ for $x \ge 204$,
$0 \le j < 204$.
\ADFvfyParStart{(208, ((10, 204, ((204, 1), (4, 1)))), ((12, 17), (4, 1)))} 

\adfDgap
\noindent{\boldmath $ 12^{17} 24^{1} $}~
With the point set $\{0, 1, \dots, 227\}$ partitioned into
 residue classes modulo $17$ for $\{0, 1, \dots, 203\}$, and
 $\{204, 205, \dots, 227\}$,
 the design is generated from

\adfLgap {\adfBfont
$\{188, 218, 134, 75, 37\}$,
$\{58, 221, 176, 62, 192\}$,
$\{93, 13, 133, 165, 102\}$,\adfsplit
$\{149, 66, 113, 188, 87\}$,
$\{91, 179, 62, 18, 67\}$,
$\{0, 13, 48, 94, 108\}$,\adfsplit
$\{0, 2, 12, 79, 140\}$,
$\{0, 1, 19, 56, 210\}$,
$\{0, 3, 23, 182, 204\}$,\adfsplit
$\{0, 11, 146, 174, 211\}$,
$\{0, 8, 50, 65, 219\}$,
$\{0, 6, 33, 104, 111\}$

}
\adfLgap \noindent by the mapping:
$x \mapsto x +  j \adfmod{204}$ for $x < 204$,
$x \mapsto (x - 204 + 2 j \adfmod{24}) + 204$ for $x \ge 204$,
$0 \le j < 204$.
\ADFvfyParStart{(228, ((12, 204, ((204, 1), (24, 2)))), ((12, 17), (24, 1)))} 

\adfDgap
\noindent{\boldmath $ 12^{17} 44^{1} $}~
With the point set $\{0, 1, \dots, 247\}$ partitioned into
 residue classes modulo $17$ for $\{0, 1, \dots, 203\}$, and
 $\{204, 205, \dots, 247\}$,
 the design is generated from

\adfLgap {\adfBfont
$\{119, 149, 226, 160, 141\}$,
$\{208, 52, 201, 121, 163\}$,
$\{137, 237, 133, 162, 84\}$,\adfsplit
$\{187, 133, 229, 69, 75\}$,
$\{82, 98, 19, 1, 203\}$,
$\{148, 225, 19, 180, 89\}$,\adfsplit
$\{0, 5, 45, 76, 207\}$,
$\{0, 9, 66, 139, 240\}$,
$\{0, 13, 27, 50, 241\}$,\adfsplit
$\{0, 3, 15, 39, 87\}$,
$\{0, 1, 61, 89, 96\}$,
$\{0, 44, 90, 142, 206\}$,\adfsplit
$\{0, 21, 47, 103, 230\}$,
$\{0, 10, 104, 137, 215\}$

}
\adfLgap \noindent by the mapping:
$x \mapsto x +  j \adfmod{204}$ for $x < 204$,
$x \mapsto (x - 204 + 3 j \adfmod{36}) + 204$ for $204 \le x < 240$,
$x \mapsto (x + 2 j \adfmod{8}) + 240$ for $x \ge 240$,
$0 \le j < 204$.
\ADFvfyParStart{(248, ((14, 204, ((204, 1), (36, 3), (8, 2)))), ((12, 17), (44, 1)))} 

\adfDgap
\noindent{\boldmath $ 12^{18} 8^{1} $}~
With the point set $\{0, 1, \dots, 223\}$ partitioned into
 residue classes modulo $18$ for $\{0, 1, \dots, 215\}$, and
 $\{216, 217, \dots, 223\}$,
 the design is generated from

\adfLgap {\adfBfont
$\{46, 40, 182, 140, 125\}$,
$\{222, 76, 37, 62, 74\}$,
$\{73, 76, 175, 131, 84\}$,\adfsplit
$\{62, 89, 58, 63, 108\}$,
$\{0, 7, 66, 206, 221\}$,
$\{0, 13, 81, 119, 141\}$,\adfsplit
$\{0, 23, 51, 118, 183\}$,
$\{0, 9, 61, 96, 139\}$,
$\{0, 21, 53, 124, 154\}$,\adfsplit
$\{0, 29, 63, 111, 175\}$,
$\{0, 16, 40, 89, 109\}$

}
\adfLgap \noindent by the mapping:
$x \mapsto x +  j \adfmod{216}$ for $x < 216$,
$x \mapsto (x +  j \adfmod{8}) + 216$ for $x \ge 216$,
$0 \le j < 216$.
\ADFvfyParStart{(224, ((11, 216, ((216, 1), (8, 1)))), ((12, 18), (8, 1)))} 

\adfDgap
\noindent{\boldmath $ 12^{18} 28^{1} $}~
With the point set $\{0, 1, \dots, 243\}$ partitioned into
 residue classes modulo $18$ for $\{0, 1, \dots, 215\}$, and
 $\{216, 217, \dots, 243\}$,
 the design is generated from

\adfLgap {\adfBfont
$\{243, 11, 196, 81, 2\}$,
$\{57, 72, 83, 208, 206\}$,
$\{98, 140, 133, 117, 186\}$,\adfsplit
$\{64, 171, 39, 181, 26\}$,
$\{177, 115, 55, 82, 219\}$,
$\{0, 1, 4, 188, 196\}$,\adfsplit
$\{0, 17, 66, 114, 158\}$,
$\{0, 37, 76, 133, 176\}$,
$\{0, 5, 78, 165, 226\}$,\adfsplit
$\{0, 6, 47, 110, 222\}$,
$\{0, 30, 85, 135, 219\}$,
$\{0, 14, 59, 127, 225\}$,\adfsplit
$\{0, 12, 130, 164, 227\}$

}
\adfLgap \noindent by the mapping:
$x \mapsto x +  j \adfmod{216}$ for $x < 216$,
$x \mapsto (x +  j \adfmod{24}) + 216$ for $216 \le x < 240$,
$x \mapsto (x +  j \adfmod{4}) + 240$ for $x \ge 240$,
$0 \le j < 216$.
\ADFvfyParStart{(244, ((13, 216, ((216, 1), (24, 1), (4, 1)))), ((12, 18), (28, 1)))} 

\adfDgap
\noindent{\boldmath $ 12^{18} 48^{1} $}~
With the point set $\{0, 1, \dots, 263\}$ partitioned into
 residue classes modulo $18$ for $\{0, 1, \dots, 215\}$, and
 $\{216, 217, \dots, 263\}$,
 the design is generated from

\adfLgap {\adfBfont
$\{139, 181, 20, 168, 116\}$,
$\{165, 247, 43, 44, 83\}$,
$\{115, 257, 202, 17, 31\}$,\adfsplit
$\{67, 158, 216, 75, 72\}$,
$\{109, 229, 28, 102, 143\}$,
$\{246, 137, 86, 165, 118\}$,\adfsplit
$\{0, 30, 63, 143, 245\}$,
$\{0, 12, 100, 117, 235\}$,
$\{0, 6, 112, 179, 261\}$,\adfsplit
$\{0, 10, 21, 71, 178\}$,
$\{0, 2, 22, 78, 192\}$,
$\{0, 27, 85, 154, 220\}$,\adfsplit
$\{0, 15, 64, 139, 228\}$,
$\{0, 4, 70, 163, 230\}$,
$\{0, 9, 44, 200, 224\}$

}
\adfLgap \noindent by the mapping:
$x \mapsto x +  j \adfmod{216}$ for $x < 216$,
$x \mapsto (x - 216 + 2 j \adfmod{48}) + 216$ for $x \ge 216$,
$0 \le j < 216$.
\ADFvfyParStart{(264, ((15, 216, ((216, 1), (48, 2)))), ((12, 18), (48, 1)))} 

\adfDgap
\noindent{\boldmath $ 12^{19} 32^{1} $}~
With the point set $\{0, 1, \dots, 259\}$ partitioned into
 residue classes modulo $19$ for $\{0, 1, \dots, 227\}$, and
 $\{228, 229, \dots, 259\}$,
 the design is generated from

\adfLgap {\adfBfont
$\{185, 139, 30, 231, 216\}$,
$\{245, 50, 213, 48, 35\}$,
$\{198, 115, 190, 54, 151\}$,\adfsplit
$\{259, 99, 25, 70, 204\}$,
$\{184, 241, 99, 113, 122\}$,
$\{83, 32, 162, 161, 172\}$,\adfsplit
$\{0, 5, 27, 142, 232\}$,
$\{0, 17, 58, 175, 234\}$,
$\{0, 21, 55, 162, 230\}$,\adfsplit
$\{0, 26, 59, 185, 228\}$,
$\{0, 20, 64, 101, 168\}$,
$\{0, 6, 24, 54, 106\}$,\adfsplit
$\{0, 3, 28, 35, 138\}$,
$\{0, 4, 16, 72, 112\}$

}
\adfLgap \noindent by the mapping:
$x \mapsto x +  j \adfmod{228}$ for $x < 228$,
$x \mapsto (x - 228 + 8 j \adfmod{32}) + 228$ for $x \ge 228$,
$0 \le j < 228$.
\ADFvfyParStart{(260, ((14, 228, ((228, 1), (32, 8)))), ((12, 19), (32, 1)))} 

\adfDgap
\noindent{\boldmath $ 12^{19} 52^{1} $}~
With the point set $\{0, 1, \dots, 279\}$ partitioned into
 residue classes modulo $19$ for $\{0, 1, \dots, 227\}$, and
 $\{228, 229, \dots, 279\}$,
 the design is generated from

\adfLgap {\adfBfont
$\{250, 205, 227, 108, 200\}$,
$\{273, 92, 45, 175, 2\}$,
$\{220, 21, 239, 115, 12\}$,\adfsplit
$\{261, 7, 144, 207, 118\}$,
$\{82, 5, 3, 132, 240\}$,
$\{111, 272, 98, 31, 77\}$,\adfsplit
$\{0, 1, 7, 212, 257\}$,
$\{0, 3, 187, 217, 236\}$,
$\{0, 10, 61, 163, 276\}$,\adfsplit
$\{0, 12, 36, 71, 156\}$,
$\{0, 15, 60, 93, 147\}$,
$\{0, 4, 66, 179, 258\}$,\adfsplit
$\{0, 37, 106, 158, 230\}$,
$\{0, 8, 39, 64, 112\}$,
$\{0, 32, 74, 142, 275\}$,\adfsplit
$\{0, 18, 58, 146, 255\}$

}
\adfLgap \noindent by the mapping:
$x \mapsto x +  j \adfmod{228}$ for $x < 228$,
$x \mapsto (x - 228 + 4 j \adfmod{48}) + 228$ for $228 \le x < 276$,
$x \mapsto (x +  j \adfmod{4}) + 276$ for $x \ge 276$,
$0 \le j < 228$.
\ADFvfyParStart{(280, ((16, 228, ((228, 1), (48, 4), (4, 1)))), ((12, 19), (52, 1)))} 

\adfDgap
\noindent{\boldmath $ 12^{20} 8^{1} $}~
With the point set $\{0, 1, \dots, 247\}$ partitioned into
 residue classes modulo $20$ for $\{0, 1, \dots, 239\}$, and
 $\{240, 241, \dots, 247\}$,
 the design is generated from

\adfLgap {\adfBfont
$\{33, 245, 148, 94, 64\}$,
$\{7, 223, 176, 191, 134\}$,
$\{160, 219, 15, 56, 6\}$,\adfsplit
$\{16, 140, 28, 26, 3\}$,
$\{81, 145, 92, 7, 213\}$,
$\{0, 45, 110, 162, 240\}$,\adfsplit
$\{0, 16, 33, 91, 174\}$,
$\{0, 38, 87, 131, 177\}$,
$\{0, 7, 35, 129, 168\}$,\adfsplit
$\{0, 1, 4, 171, 222\}$,
$\{0, 6, 14, 43, 148\}$,
$\{0, 5, 26, 81, 178\}$,\adfsplit
$\{0, 48, 96, 144, 192\}$

}
\adfLgap \noindent by the mapping:
$x \mapsto x +  j \adfmod{240}$ for $x < 240$,
$x \mapsto (x +  j \adfmod{8}) + 240$ for $x \ge 240$,
$0 \le j < 240$
 for the first 12 blocks,
$0 \le j < 48$
 for the last block.
\ADFvfyParStart{(248, ((12, 240, ((240, 1), (8, 1))), (1, 48, ((240, 1), (8, 1)))), ((12, 20), (8, 1)))} 

\adfDgap
\noindent{\boldmath $ 12^{21} 20^{1} $}~
With the point set $\{0, 1, \dots, 271\}$ partitioned into
 residue classes modulo $21$ for $\{0, 1, \dots, 251\}$, and
 $\{252, 253, \dots, 271\}$,
 the design is generated from

\adfLgap {\adfBfont
$\{113, 94, 74, 217, 245\}$,
$\{215, 76, 108, 242, 143\}$,
$\{61, 249, 149, 258, 203\}$,\adfsplit
$\{61, 53, 240, 197, 38\}$,
$\{247, 241, 217, 47, 250\}$,
$\{100, 153, 23, 151, 83\}$,\adfsplit
$\{0, 1, 79, 90, 265\}$,
$\{0, 34, 71, 169, 264\}$,
$\{0, 7, 55, 102, 193\}$,\adfsplit
$\{0, 16, 38, 112, 141\}$,
$\{0, 5, 31, 41, 196\}$,
$\{0, 12, 25, 69, 131\}$,\adfsplit
$\{0, 40, 85, 177, 252\}$,
$\{0, 4, 18, 176, 262\}$

}
\adfLgap \noindent by the mapping:
$x \mapsto x +  j \adfmod{252}$ for $x < 252$,
$x \mapsto (x +  j \adfmod{12}) + 252$ for $252 \le x < 264$,
$x \mapsto (x + 2 j \adfmod{8}) + 264$ for $x \ge 264$,
$0 \le j < 252$.
\ADFvfyParStart{(272, ((14, 252, ((252, 1), (12, 1), (8, 2)))), ((12, 21), (20, 1)))} 

\adfDgap
\noindent{\boldmath $ 12^{21} 40^{1} $}~
With the point set $\{0, 1, \dots, 291\}$ partitioned into
 residue classes modulo $21$ for $\{0, 1, \dots, 251\}$, and
 $\{252, 253, \dots, 291\}$,
 the design is generated from

\adfLgap {\adfBfont
$\{142, 36, 282, 31, 124\}$,
$\{251, 168, 59, 58, 102\}$,
$\{3, 257, 163, 74, 214\}$,\adfsplit
$\{7, 177, 4, 56, 40\}$,
$\{22, 16, 161, 9, 200\}$,
$\{193, 283, 18, 94, 63\}$,\adfsplit
$\{156, 183, 280, 247, 191\}$,
$\{0, 2, 11, 25, 288\}$,
$\{0, 10, 34, 158, 190\}$,\adfsplit
$\{0, 19, 57, 127, 174\}$,
$\{0, 12, 67, 224, 283\}$,
$\{0, 30, 120, 166, 272\}$,\adfsplit
$\{0, 22, 75, 226, 267\}$,
$\{0, 15, 73, 102, 187\}$,
$\{0, 4, 133, 202, 262\}$,\adfsplit
$\{0, 17, 37, 171, 280\}$

}
\adfLgap \noindent by the mapping:
$x \mapsto x +  j \adfmod{252}$ for $x < 252$,
$x \mapsto (x +  j \adfmod{36}) + 252$ for $252 \le x < 288$,
$x \mapsto (x +  j \adfmod{4}) + 288$ for $x \ge 288$,
$0 \le j < 252$.
\ADFvfyParStart{(292, ((16, 252, ((252, 1), (36, 1), (4, 1)))), ((12, 21), (40, 1)))} 

\adfDgap
\noindent{\boldmath $ 12^{21} 60^{1} $}~
With the point set $\{0, 1, \dots, 311\}$ partitioned into
 residue classes modulo $21$ for $\{0, 1, \dots, 251\}$, and
 $\{252, 253, \dots, 311\}$,
 the design is generated from

\adfLgap {\adfBfont
$\{91, 300, 131, 149, 61\}$,
$\{135, 174, 260, 231, 203\}$,
$\{134, 291, 180, 213, 89\}$,\adfsplit
$\{84, 239, 52, 293, 101\}$,
$\{0, 119, 7, 120, 108\}$,
$\{53, 253, 30, 140, 178\}$,\adfsplit
$\{95, 154, 230, 262, 45\}$,
$\{269, 243, 94, 107, 189\}$,
$\{0, 2, 37, 43, 303\}$,\adfsplit
$\{0, 3, 51, 55, 75\}$,
$\{0, 9, 36, 83, 190\}$,
$\{0, 25, 85, 208, 271\}$,\adfsplit
$\{0, 16, 93, 146, 265\}$,
$\{0, 10, 90, 121, 274\}$,
$\{0, 19, 153, 179, 286\}$,\adfsplit
$\{0, 8, 22, 86, 276\}$,
$\{0, 5, 61, 163, 306\}$,
$\{0, 15, 152, 186, 300\}$

}
\adfLgap \noindent by the mapping:
$x \mapsto x +  j \adfmod{252}$ for $x < 252$,
$x \mapsto (x +  j \adfmod{36}) + 252$ for $252 \le x < 288$,
$x \mapsto (x + 2 j \adfmod{24}) + 288$ for $x \ge 288$,
$0 \le j < 252$.
\ADFvfyParStart{(312, ((18, 252, ((252, 1), (36, 1), (24, 2)))), ((12, 21), (60, 1)))} 

\adfDgap
\noindent{\boldmath $ 12^{22} 4^{1} $}~
With the point set $\{0, 1, \dots, 267\}$ partitioned into
 residue classes modulo $22$ for $\{0, 1, \dots, 263\}$, and
 $\{264, 265, 266, 267\}$,
 the design is generated from

\adfLgap {\adfBfont
$\{210, 229, 3, 186, 158\}$,
$\{8, 3, 149, 242, 134\}$,
$\{122, 200, 20, 223, 164\}$,\adfsplit
$\{187, 235, 101, 95, 238\}$,
$\{108, 76, 155, 126, 50\}$,
$\{0, 1, 63, 70, 264\}$,\adfsplit
$\{0, 2, 11, 106, 254\}$,
$\{0, 8, 49, 83, 174\}$,
$\{0, 4, 35, 100, 117\}$,\adfsplit
$\{0, 27, 72, 112, 149\}$,
$\{0, 13, 33, 107, 210\}$,
$\{0, 14, 53, 150, 218\}$,\adfsplit
$\{0, 16, 80, 135, 191\}$

}
\adfLgap \noindent by the mapping:
$x \mapsto x +  j \adfmod{264}$ for $x < 264$,
$x \mapsto (x +  j \adfmod{4}) + 264$ for $x \ge 264$,
$0 \le j < 264$.
\ADFvfyParStart{(268, ((13, 264, ((264, 1), (4, 1)))), ((12, 22), (4, 1)))} 

\adfDgap
\noindent{\boldmath $ 12^{22} 24^{1} $}~
With the point set $\{0, 1, \dots, 287\}$ partitioned into
 residue classes modulo $22$ for $\{0, 1, \dots, 263\}$, and
 $\{264, 265, \dots, 287\}$,
 the design is generated from

\adfLgap {\adfBfont
$\{55, 200, 198, 170, 123\}$,
$\{189, 100, 151, 214, 260\}$,
$\{14, 257, 196, 181, 185\}$,\adfsplit
$\{173, 258, 39, 282, 95\}$,
$\{215, 73, 123, 175, 209\}$,
$\{145, 228, 199, 88, 285\}$,\adfsplit
$\{87, 107, 74, 0, 205\}$,
$\{0, 1, 8, 49, 248\}$,
$\{0, 31, 84, 127, 200\}$,\adfsplit
$\{0, 5, 113, 125, 152\}$,
$\{0, 14, 173, 209, 280\}$,
$\{0, 10, 80, 174, 282\}$,\adfsplit
$\{0, 18, 37, 99, 222\}$,
$\{0, 9, 206, 238, 279\}$,
$\{0, 3, 106, 129, 271\}$

}
\adfLgap \noindent by the mapping:
$x \mapsto x +  j \adfmod{264}$ for $x < 264$,
$x \mapsto (x +  j \adfmod{24}) + 264$ for $x \ge 264$,
$0 \le j < 264$.
\ADFvfyParStart{(288, ((15, 264, ((264, 1), (24, 1)))), ((12, 22), (24, 1)))} 

\adfDgap
\noindent{\boldmath $ 12^{22} 44^{1} $}~
With the point set $\{0, 1, \dots, 307\}$ partitioned into
 residue classes modulo $22$ for $\{0, 1, \dots, 263\}$, and
 $\{264, 265, \dots, 307\}$,
 the design is generated from

\adfLgap {\adfBfont
$\{225, 105, 169, 186, 229\}$,
$\{145, 116, 30, 196, 223\}$,
$\{297, 250, 252, 1, 173\}$,\adfsplit
$\{241, 264, 151, 259, 240\}$,
$\{238, 231, 76, 137, 205\}$,
$\{78, 28, 266, 245, 145\}$,\adfsplit
$\{70, 269, 123, 11, 39\}$,
$\{0, 3, 8, 232, 275\}$,
$\{0, 6, 131, 201, 279\}$,\adfsplit
$\{0, 25, 99, 141, 277\}$,
$\{0, 10, 46, 58, 192\}$,
$\{0, 9, 96, 145, 169\}$,\adfsplit
$\{0, 20, 54, 111, 300\}$,
$\{0, 16, 37, 159, 294\}$,
$\{0, 14, 55, 226, 305\}$,\adfsplit
$\{0, 30, 75, 181, 289\}$,
$\{0, 11, 76, 138, 161\}$

}
\adfLgap \noindent by the mapping:
$x \mapsto x +  j \adfmod{264}$ for $x < 264$,
$x \mapsto (x +  j \adfmod{24}) + 264$ for $264 \le x < 288$,
$x \mapsto (x +  j \adfmod{12}) + 288$ for $288 \le x < 300$,
$x \mapsto (x - 300 +  j \adfmod{8}) + 300$ for $x \ge 300$,
$0 \le j < 264$.
\ADFvfyParStart{(308, ((17, 264, ((264, 1), (24, 1), (12, 1), (8, 1)))), ((12, 22), (44, 1)))} 

\adfDgap
\noindent{\boldmath $ 12^{22} 64^{1} $}~
With the point set $\{0, 1, \dots, 327\}$ partitioned into
 residue classes modulo $22$ for $\{0, 1, \dots, 263\}$, and
 $\{264, 265, \dots, 327\}$,
 the design is generated from

\adfLgap {\adfBfont
$\{216, 50, 184, 145, 131\}$,
$\{256, 319, 162, 149, 192\}$,
$\{248, 317, 84, 11, 67\}$,\adfsplit
$\{278, 92, 152, 153, 102\}$,
$\{177, 287, 171, 162, 173\}$,
$\{155, 202, 324, 85, 184\}$,\adfsplit
$\{291, 58, 27, 95, 6\}$,
$\{70, 37, 1, 185, 269\}$,
$\{50, 267, 140, 62, 4\}$,\adfsplit
$\{0, 3, 155, 259, 323\}$,
$\{0, 23, 146, 238, 301\}$,
$\{0, 34, 79, 142, 309\}$,\adfsplit
$\{0, 20, 77, 125, 197\}$,
$\{0, 24, 86, 127, 182\}$,
$\{0, 35, 111, 224, 280\}$,\adfsplit
$\{0, 25, 84, 126, 294\}$,
$\{0, 7, 131, 150, 290\}$,
$\{0, 16, 135, 226, 304\}$,\adfsplit
$\{0, 28, 93, 190, 296\}$

}
\adfLgap \noindent by the mapping:
$x \mapsto x +  j \adfmod{264}$ for $x < 264$,
$x \mapsto (x - 264 + 2 j \adfmod{48}) + 264$ for $264 \le x < 312$,
$x \mapsto (x +  j \adfmod{12}) + 312$ for $312 \le x < 324$,
$x \mapsto (x +  j \adfmod{4}) + 324$ for $x \ge 324$,
$0 \le j < 264$.
\ADFvfyParStart{(328, ((19, 264, ((264, 1), (48, 2), (12, 1), (4, 1)))), ((12, 22), (64, 1)))} 

\adfDgap
\noindent{\boldmath $ 12^{23} 28^{1} $}~
With the point set $\{0, 1, \dots, 303\}$ partitioned into
 residue classes modulo $23$ for $\{0, 1, \dots, 275\}$, and
 $\{276, 277, \dots, 303\}$,
 the design is generated from

\adfLgap {\adfBfont
$\{269, 101, 208, 67, 10\}$,
$\{49, 186, 114, 147, 207\}$,
$\{124, 51, 282, 214, 205\}$,\adfsplit
$\{194, 294, 19, 48, 241\}$,
$\{8, 274, 277, 85, 151\}$,
$\{300, 33, 70, 95, 76\}$,\adfsplit
$\{0, 1, 3, 14, 278\}$,
$\{0, 5, 119, 126, 276\}$,
$\{0, 15, 41, 246, 281\}$,\adfsplit
$\{0, 4, 22, 170, 194\}$,
$\{0, 27, 76, 116, 218\}$,
$\{0, 20, 48, 151, 245\}$,\adfsplit
$\{0, 8, 50, 209, 221\}$,
$\{0, 32, 70, 129, 181\}$,
$\{0, 16, 100, 136, 180\}$,\adfsplit
$\{0, 35, 88, 144, 212\}$

}
\adfLgap \noindent by the mapping:
$x \mapsto x +  j \adfmod{276}$ for $x < 276$,
$x \mapsto (x - 276 + 7 j \adfmod{28}) + 276$ for $x \ge 276$,
$0 \le j < 276$.
\ADFvfyParStart{(304, ((16, 276, ((276, 1), (28, 7)))), ((12, 23), (28, 1)))} 

\adfDgap
\noindent{\boldmath $ 12^{23} 48^{1} $}~
With the point set $\{0, 1, \dots, 323\}$ partitioned into
 residue classes modulo $23$ for $\{0, 1, \dots, 275\}$, and
 $\{276, 277, \dots, 323\}$,
 the design is generated from

\adfLgap {\adfBfont
$\{45, 3, 146, 197, 315\}$,
$\{73, 78, 276, 29, 255\}$,
$\{109, 99, 92, 90, 121\}$,\adfsplit
$\{268, 64, 19, 192, 102\}$,
$\{312, 56, 133, 203, 67\}$,
$\{156, 3, 281, 88, 273\}$,\adfsplit
$\{302, 23, 27, 98, 210\}$,
$\{6, 86, 21, 85, 242\}$,
$\{0, 3, 154, 263, 288\}$,\adfsplit
$\{0, 8, 36, 96, 228\}$,
$\{0, 35, 98, 141, 203\}$,
$\{0, 21, 53, 155, 179\}$,\adfsplit
$\{0, 20, 127, 222, 315\}$,
$\{0, 25, 162, 243, 323\}$,
$\{0, 14, 190, 229, 322\}$,\adfsplit
$\{0, 30, 87, 146, 310\}$,
$\{0, 18, 131, 242, 297\}$,
$\{0, 26, 198, 235, 285\}$

}
\adfLgap \noindent by the mapping:
$x \mapsto x +  j \adfmod{276}$ for $x < 276$,
$x \mapsto (x - 276 + 4 j \adfmod{48}) + 276$ for $x \ge 276$,
$0 \le j < 276$.
\ADFvfyParStart{(324, ((18, 276, ((276, 1), (48, 4)))), ((12, 23), (48, 1)))} 

\adfDgap
\noindent{\boldmath $ 12^{23} 68^{1} $}~
With the point set $\{0, 1, \dots, 343\}$ partitioned into
 residue classes modulo $23$ for $\{0, 1, \dots, 275\}$, and
 $\{276, 277, \dots, 343\}$,
 the design is generated from

\adfLgap {\adfBfont
$\{60, 341, 75, 237, 178\}$,
$\{17, 239, 64, 15, 307\}$,
$\{173, 85, 6, 216, 224\}$,\adfsplit
$\{87, 55, 116, 52, 282\}$,
$\{213, 269, 90, 299, 188\}$,
$\{275, 61, 303, 185, 255\}$,\adfsplit
$\{318, 259, 265, 191, 165\}$,
$\{233, 342, 147, 106, 252\}$,
$\{161, 248, 241, 325, 76\}$,\adfsplit
$\{306, 43, 197, 119, 64\}$,
$\{0, 5, 22, 147, 318\}$,
$\{0, 4, 57, 164, 317\}$,\adfsplit
$\{0, 12, 48, 108, 132\}$,
$\{0, 1, 34, 45, 72\}$,
$\{0, 14, 135, 248, 290\}$,\adfsplit
$\{0, 30, 95, 170, 325\}$,
$\{0, 10, 193, 209, 289\}$,
$\{0, 13, 63, 102, 309\}$,\adfsplit
$\{0, 18, 166, 203, 296\}$,
$\{0, 9, 40, 159, 276\}$

}
\adfLgap \noindent by the mapping:
$x \mapsto x +  j \adfmod{276}$ for $x < 276$,
$x \mapsto (x - 276 + 5 j \adfmod{60}) + 276$ for $276 \le x < 336$,
$x \mapsto (x + 2 j \adfmod{8}) + 336$ for $x \ge 336$,
$0 \le j < 276$.
\ADFvfyParStart{(344, ((20, 276, ((276, 1), (60, 5), (8, 2)))), ((12, 23), (68, 1)))} 

\adfDgap
\noindent{\boldmath $ 12^{24} 32^{1} $}~
With the point set $\{0, 1, \dots, 319\}$ partitioned into
 residue classes modulo $24$ for $\{0, 1, \dots, 287\}$, and
 $\{288, 289, \dots, 319\}$,
 the design is generated from

\adfLgap {\adfBfont
$\{309, 261, 170, 111, 185\}$,
$\{188, 317, 11, 222, 221\}$,
$\{300, 266, 61, 245, 127\}$,\adfsplit
$\{107, 127, 135, 240, 296\}$,
$\{266, 51, 319, 217, 92\}$,
$\{36, 166, 285, 196, 1\}$,\adfsplit
$\{27, 307, 137, 49, 245\}$,
$\{210, 48, 111, 108, 95\}$,
$\{120, 146, 65, 151, 134\}$,\adfsplit
$\{0, 2, 54, 252, 303\}$,
$\{0, 9, 94, 176, 308\}$,
$\{0, 6, 50, 137, 232\}$,\adfsplit
$\{0, 10, 42, 67, 223\}$,
$\{0, 19, 80, 103, 148\}$,
$\{0, 11, 51, 109, 152\}$,\adfsplit
$\{0, 18, 97, 134, 161\}$,
$\{0, 7, 53, 124, 153\}$

}
\adfLgap \noindent by the mapping:
$x \mapsto x +  j \adfmod{288}$ for $x < 288$,
$x \mapsto (x +  j \adfmod{24}) + 288$ for $288 \le x < 312$,
$x \mapsto (x +  j \adfmod{8}) + 312$ for $x \ge 312$,
$0 \le j < 288$.
\ADFvfyParStart{(320, ((17, 288, ((288, 1), (24, 1), (8, 1)))), ((12, 24), (32, 1)))} 

\adfDgap
\noindent{\boldmath $ 12^{24} 52^{1} $}~
With the point set $\{0, 1, \dots, 339\}$ partitioned into
 residue classes modulo $24$ for $\{0, 1, \dots, 287\}$, and
 $\{288, 289, \dots, 339\}$,
 the design is generated from

\adfLgap {\adfBfont
$\{110, 286, 312, 184, 33\}$,
$\{87, 174, 240, 31, 307\}$,
$\{76, 299, 67, 61, 174\}$,\adfsplit
$\{68, 335, 222, 191, 70\}$,
$\{96, 5, 165, 135, 51\}$,
$\{56, 111, 75, 197, 2\}$,\adfsplit
$\{331, 173, 120, 190, 172\}$,
$\{330, 2, 270, 227, 277\}$,
$\{74, 223, 174, 300, 182\}$,\adfsplit
$\{113, 314, 54, 139, 136\}$,
$\{0, 34, 140, 228, 326\}$,
$\{0, 22, 47, 193, 336\}$,\adfsplit
$\{0, 12, 28, 104, 185\}$,
$\{0, 4, 44, 163, 227\}$,
$\{0, 14, 71, 138, 170\}$,\adfsplit
$\{0, 29, 162, 220, 309\}$,
$\{0, 11, 62, 89, 172\}$,
$\{0, 10, 187, 208, 299\}$,\adfsplit
$\{0, 5, 38, 251, 300\}$

}
\adfLgap \noindent by the mapping:
$x \mapsto x +  j \adfmod{288}$ for $x < 288$,
$x \mapsto (x +  j \adfmod{32}) + 288$ for $288 \le x < 320$,
$x \mapsto (x +  j \adfmod{16}) + 320$ for $320 \le x < 336$,
$x \mapsto (x +  j \adfmod{4}) + 336$ for $x \ge 336$,
$0 \le j < 288$.
\ADFvfyParStart{(340, ((19, 288, ((288, 1), (32, 1), (16, 1), (4, 1)))), ((12, 24), (52, 1)))} 

\adfDgap
\noindent{\boldmath $ 12^{27} 4^{1} $}~
With the point set $\{0, 1, \dots, 327\}$ partitioned into
 residue classes modulo $27$ for $\{0, 1, \dots, 323\}$, and
 $\{324, 325, 326, 327\}$,
 the design is generated from

\adfLgap {\adfBfont
$\{210, 26, 110, 200, 214\}$,
$\{264, 306, 71, 322, 209\}$,
$\{189, 225, 33, 259, 113\}$,\adfsplit
$\{128, 192, 210, 203, 255\}$,
$\{169, 109, 160, 261, 286\}$,
$\{138, 36, 19, 158, 13\}$,\adfsplit
$\{144, 39, 26, 72, 313\}$,
$\{0, 3, 94, 137, 161\}$,
$\{0, 1, 30, 259, 324\}$,\adfsplit
$\{0, 15, 86, 215, 246\}$,
$\{0, 12, 44, 203, 225\}$,
$\{0, 5, 53, 115, 176\}$,\adfsplit
$\{0, 8, 47, 204, 283\}$,
$\{0, 2, 28, 87, 144\}$,
$\{0, 35, 103, 141, 210\}$,\adfsplit
$\{0, 19, 40, 96, 170\}$

}
\adfLgap \noindent by the mapping:
$x \mapsto x +  j \adfmod{324}$ for $x < 324$,
$x \mapsto (x +  j \adfmod{4}) + 324$ for $x \ge 324$,
$0 \le j < 324$.
\ADFvfyParStart{(328, ((16, 324, ((324, 1), (4, 1)))), ((12, 27), (4, 1)))} 

\adfDgap
\noindent{\boldmath $ 13^{8} 17^{1} $}~
With the point set $\{0, 1, \dots, 120\}$ partitioned into
 residue classes modulo $8$ for $\{0, 1, \dots, 103\}$, and
 $\{104, 105, \dots, 120\}$,
 the design is generated from

\adfLgap {\adfBfont
$\{19, 108, 60, 61, 7\}$,
$\{0, 2, 19, 100, 112\}$,
$\{0, 7, 29, 44, 105\}$,\adfsplit
$\{0, 3, 28, 33, 46\}$,
$\{0, 10, 31, 45, 65\}$,
$\{0, 9, 36, 47, 117\}$,\adfsplit
$\{0, 26, 52, 78, 120\}$

}
\adfLgap \noindent by the mapping:
$x \mapsto x +  j \adfmod{104}$ for $x < 104$,
$x \mapsto (x - 104 + 2 j \adfmod{16}) + 104$ for $104 \le x < 120$,
$120 \mapsto 120$,
$0 \le j < 104$
 for the first six blocks,
$0 \le j < 26$
 for the last block.
\ADFvfyParStart{(121, ((6, 104, ((104, 1), (16, 2), (1, 1))), (1, 26, ((104, 1), (16, 2), (1, 1)))), ((13, 8), (17, 1)))} 

\adfDgap
\noindent{\boldmath $ 13^{12} 1^{1} $}~
With the point set $\{0, 1, \dots, 156\}$ partitioned into
 residue classes modulo $12$ for $\{0, 1, \dots, 155\}$, and
 $\{156\}$,
 the design is generated from

\adfLgap {\adfBfont
$\{79, 33, 38, 29, 142\}$,
$\{21, 108, 41, 133, 102\}$,
$\{0, 2, 10, 29, 32\}$,\adfsplit
$\{0, 1, 14, 54, 71\}$,
$\{0, 11, 37, 88, 111\}$,
$\{0, 7, 28, 66, 101\}$,\adfsplit
$\{0, 15, 33, 49, 91\}$,
$\{0, 39, 78, 117, 156\}$

}
\adfLgap \noindent by the mapping:
$x \mapsto x +  j \adfmod{156}$ for $x < 156$,
$156 \mapsto 156$,
$0 \le j < 156$
 for the first seven blocks,
$0 \le j < 39$
 for the last block.
\ADFvfyParStart{(157, ((7, 156, ((156, 1), (1, 1))), (1, 39, ((156, 1), (1, 1)))), ((13, 12), (1, 1)))} 

\adfDgap
\noindent{\boldmath $ 13^{12} 21^{1} $}~
With the point set $\{0, 1, \dots, 176\}$ partitioned into
 residue classes modulo $12$ for $\{0, 1, \dots, 155\}$, and
 $\{156, 157, \dots, 176\}$,
 the design is generated from

\adfLgap {\adfBfont
$\{124, 152, 156, 101, 21\}$,
$\{47, 85, 147, 28, 21\}$,
$\{0, 1, 3, 146, 169\}$,\adfsplit
$\{0, 5, 22, 63, 168\}$,
$\{0, 21, 65, 107, 167\}$,
$\{0, 8, 55, 124, 161\}$,\adfsplit
$\{0, 6, 33, 67, 85\}$,
$\{0, 4, 20, 35, 110\}$,
$\{0, 9, 54, 83, 97\}$,\adfsplit
$\{0, 39, 78, 117, 176\}$

}
\adfLgap \noindent by the mapping:
$x \mapsto x +  j \adfmod{156}$ for $x < 156$,
$x \mapsto (x +  j \adfmod{12}) + 156$ for $156 \le x < 168$,
$x \mapsto (x + 2 j \adfmod{8}) + 168$ for $168 \le x < 176$,
$176 \mapsto 176$,
$0 \le j < 156$
 for the first nine blocks,
$0 \le j < 39$
 for the last block.
\ADFvfyParStart{(177, ((9, 156, ((156, 1), (12, 1), (8, 2), (1, 1))), (1, 39, ((156, 1), (12, 1), (8, 2), (1, 1)))), ((13, 12), (21, 1)))} 

\adfDgap
\noindent{\boldmath $ 13^{12} 41^{1} $}~
With the point set $\{0, 1, \dots, 196\}$ partitioned into
 residue classes modulo $12$ for $\{0, 1, \dots, 155\}$, and
 $\{156, 157, \dots, 196\}$,
 the design is generated from

\adfLgap {\adfBfont
$\{166, 115, 111, 47, 93\}$,
$\{76, 185, 66, 75, 47\}$,
$\{107, 27, 163, 0, 65\}$,\adfsplit
$\{114, 32, 168, 17, 19\}$,
$\{0, 3, 8, 102, 160\}$,
$\{0, 7, 21, 126, 192\}$,\adfsplit
$\{0, 26, 71, 112, 168\}$,
$\{0, 6, 40, 109, 177\}$,
$\{0, 23, 75, 100, 191\}$,\adfsplit
$\{0, 16, 33, 83, 161\}$,
$\{0, 11, 31, 66, 124\}$,
$\{0, 39, 78, 117, 196\}$

}
\adfLgap \noindent by the mapping:
$x \mapsto x +  j \adfmod{156}$ for $x < 156$,
$x \mapsto (x - 156 + 3 j \adfmod{36}) + 156$ for $156 \le x < 192$,
$x \mapsto (x +  j \adfmod{4}) + 192$ for $192 \le x < 196$,
$196 \mapsto 196$,
$0 \le j < 156$
 for the first 11 blocks,
$0 \le j < 39$
 for the last block.
\ADFvfyParStart{(197, ((11, 156, ((156, 1), (36, 3), (4, 1), (1, 1))), (1, 39, ((156, 1), (36, 3), (4, 1), (1, 1)))), ((13, 12), (41, 1)))} 

\adfDgap
\noindent{\boldmath $ 13^{16} 5^{1} $}~
With the point set $\{0, 1, \dots, 212\}$ partitioned into
 residue classes modulo $16$ for $\{0, 1, \dots, 207\}$, and
 $\{208, 209, 210, 211, 212\}$,
 the design is generated from

\adfLgap {\adfBfont
$\{92, 58, 2, 45, 89\}$,
$\{188, 103, 115, 104, 85\}$,
$\{82, 72, 0, 107, 14\}$,\adfsplit
$\{0, 2, 8, 127, 193\}$,
$\{0, 7, 46, 141, 208\}$,
$\{0, 28, 57, 120, 157\}$,\adfsplit
$\{0, 21, 59, 99, 153\}$,
$\{0, 20, 62, 137, 159\}$,
$\{0, 24, 50, 77, 110\}$,\adfsplit
$\{0, 4, 9, 45, 147\}$,
$\{0, 52, 104, 156, 212\}$

}
\adfLgap \noindent by the mapping:
$x \mapsto x +  j \adfmod{208}$ for $x < 208$,
$x \mapsto (x +  j \adfmod{4}) + 208$ for $208 \le x < 212$,
$212 \mapsto 212$,
$0 \le j < 208$
 for the first ten blocks,
$0 \le j < 52$
 for the last block.
\ADFvfyParStart{(213, ((10, 208, ((208, 1), (4, 1), (1, 1))), (1, 52, ((208, 1), (4, 1), (1, 1)))), ((13, 16), (5, 1)))} 

\adfDgap
\noindent{\boldmath $ 13^{16} 25^{1} $}~
With the point set $\{0, 1, \dots, 232\}$ partitioned into
 residue classes modulo $16$ for $\{0, 1, \dots, 207\}$, and
 $\{208, 209, \dots, 232\}$,
 the design is generated from

\adfLgap {\adfBfont
$\{207, 123, 149, 124, 212\}$,
$\{138, 91, 115, 28, 120\}$,
$\{217, 62, 31, 204, 53\}$,\adfsplit
$\{6, 39, 88, 178, 193\}$,
$\{0, 2, 67, 73, 217\}$,
$\{0, 3, 122, 132, 218\}$,\adfsplit
$\{0, 4, 12, 127, 170\}$,
$\{0, 13, 53, 130, 147\}$,
$\{0, 7, 34, 62, 106\}$,\adfsplit
$\{0, 19, 39, 107, 152\}$,
$\{0, 14, 51, 162, 219\}$,
$\{0, 11, 41, 149, 231\}$,\adfsplit
$\{0, 52, 104, 156, 232\}$

}
\adfLgap \noindent by the mapping:
$x \mapsto x +  j \adfmod{208}$ for $x < 208$,
$x \mapsto (x - 208 + 3 j \adfmod{24}) + 208$ for $208 \le x < 232$,
$232 \mapsto 232$,
$0 \le j < 208$
 for the first 12 blocks,
$0 \le j < 52$
 for the last block.
\ADFvfyParStart{(233, ((12, 208, ((208, 1), (24, 3), (1, 1))), (1, 52, ((208, 1), (24, 3), (1, 1)))), ((13, 16), (25, 1)))} 

\adfDgap
\noindent{\boldmath $ 13^{20} 1^{1} $}~
With the point set $\{0, 1, \dots, 260\}$ partitioned into
 residue classes modulo $20$ for $\{0, 1, \dots, 259\}$, and
 $\{260\}$,
 the design is generated from

\adfLgap {\adfBfont
$\{132, 30, 37, 91, 158\}$,
$\{150, 95, 129, 47, 125\}$,
$\{208, 65, 144, 100, 159\}$,\adfsplit
$\{255, 238, 167, 0, 124\}$,
$\{206, 144, 88, 125, 195\}$,
$\{0, 2, 68, 86, 99\}$,\adfsplit
$\{0, 12, 57, 110, 149\}$,
$\{0, 1, 9, 134, 184\}$,
$\{0, 3, 27, 90, 191\}$,\adfsplit
$\{0, 14, 47, 89, 105\}$,
$\{0, 6, 29, 112, 144\}$,
$\{0, 10, 38, 151, 224\}$,\adfsplit
$\{0, 65, 130, 195, 260\}$,
$\{0, 52, 104, 156, 208\}$

}
\adfLgap \noindent by the mapping:
$x \mapsto x +  j \adfmod{260}$ for $x < 260$,
$260 \mapsto 260$,
$0 \le j < 260$
 for the first 12 blocks,
$0 \le j < 65$
 for the next block,
$0 \le j < 52$
 for the last block.
\ADFvfyParStart{(261, ((12, 260, ((260, 1), (1, 1))), (1, 65, ((260, 1), (1, 1))), (1, 52, ((260, 1), (1, 1)))), ((13, 20), (1, 1)))} 

\adfDgap
\noindent{\boldmath $ 14^{8} 6^{1} $}~
With the point set $\{0, 1, \dots, 117\}$ partitioned into
 residue classes modulo $8$ for $\{0, 1, \dots, 111\}$, and
 $\{112, 113, \dots, 117\}$,
 the design is generated from

\adfLgap {\adfBfont
$\{55, 82, 62, 73, 114\}$,
$\{30, 64, 83, 1, 109\}$,
$\{106, 65, 53, 75, 76\}$,\adfsplit
$\{0, 1, 47, 108, 112\}$,
$\{0, 2, 65, 100, 109\}$,
$\{0, 7, 58, 76, 91\}$,\adfsplit
$\{0, 21, 35, 55, 97\}$,
$\{0, 3, 22, 28, 74\}$,
$\{0, 13, 50, 67, 73\}$,\adfsplit
$\{0, 25, 42, 68, 99\}$,
$\{0, 10, 37, 39, 116\}$

}
\adfLgap \noindent by the mapping:
$x \mapsto x + 2 j \adfmod{112}$ for $x < 112$,
$x \mapsto (x +  j \adfmod{4}) + 112$ for $112 \le x < 116$,
$x \mapsto (x +  j \adfmod{2}) + 116$ for $x \ge 116$,
$0 \le j < 56$.
\ADFvfyParStart{(118, ((11, 56, ((112, 2), (4, 1), (2, 1)))), ((14, 8), (6, 1)))} 

\adfDgap
\noindent{\boldmath $ 16^{6} 20^{1} $}~
With the point set $\{0, 1, \dots, 115\}$ partitioned into
 residue classes modulo $6$ for $\{0, 1, \dots, 95\}$, and
 $\{96, 97, \dots, 115\}$,
 the design is generated from

\adfLgap {\adfBfont
$\{0, 1, 3, 10, 112\}$,
$\{0, 4, 32, 47, 63\}$,
$\{0, 20, 46, 75, 96\}$,\adfsplit
$\{0, 11, 34, 51, 106\}$,
$\{0, 14, 39, 58, 107\}$,
$\{0, 5, 13, 74, 99\}$

}
\adfLgap \noindent by the mapping:
$x \mapsto x +  j \adfmod{96}$ for $x < 96$,
$x \mapsto (x +  j \adfmod{16}) + 96$ for $96 \le x < 112$,
$x \mapsto (x +  j \adfmod{4}) + 112$ for $x \ge 112$,
$0 \le j < 96$.
\ADFvfyParStart{(116, ((6, 96, ((96, 1), (16, 1), (4, 1)))), ((16, 6), (20, 1)))} 

\adfDgap
\noindent{\boldmath $ 16^{7} 12^{1} $}~
With the point set $\{0, 1, \dots, 123\}$ partitioned into
 residue classes modulo $7$ for $\{0, 1, \dots, 111\}$, and
 $\{112, 113, \dots, 123\}$,
 the design is generated from

\adfLgap {\adfBfont
$\{0, 1, 3, 102, 112\}$,
$\{0, 5, 23, 90, 114\}$,
$\{0, 9, 46, 71, 113\}$,\adfsplit
$\{0, 15, 34, 72, 88\}$,
$\{0, 4, 30, 36, 83\}$,
$\{0, 8, 20, 51, 68\}$

}
\adfLgap \noindent by the mapping:
$x \mapsto x +  j \adfmod{112}$ for $x < 112$,
$x \mapsto (x - 112 + 3 j \adfmod{12}) + 112$ for $x \ge 112$,
$0 \le j < 112$.
\ADFvfyParStart{(124, ((6, 112, ((112, 1), (12, 3)))), ((16, 7), (12, 1)))} 

\adfDgap
\noindent{\boldmath $ 16^{8} 4^{1} $}~
With the point set $\{0, 1, \dots, 131\}$ partitioned into
 residue classes modulo $8$ for $\{0, 1, \dots, 127\}$, and
 $\{128, 129, 130, 131\}$,
 the design is generated from

\adfLgap {\adfBfont
$\{0, 1, 3, 122, 128\}$,
$\{0, 4, 41, 46, 61\}$,
$\{0, 12, 31, 59, 89\}$,\adfsplit
$\{0, 10, 27, 53, 105\}$,
$\{0, 13, 35, 49, 103\}$,
$\{0, 11, 29, 73, 94\}$

}
\adfLgap \noindent by the mapping:
$x \mapsto x +  j \adfmod{128}$ for $x < 128$,
$x \mapsto (x +  j \adfmod{4}) + 128$ for $x \ge 128$,
$0 \le j < 128$.
\ADFvfyParStart{(132, ((6, 128, ((128, 1), (4, 1)))), ((16, 8), (4, 1)))} 

\adfDgap
\noindent{\boldmath $ 16^{9} 36^{1} $}~
With the point set $\{0, 1, \dots, 179\}$ partitioned into
 residue classes modulo $9$ for $\{0, 1, \dots, 143\}$, and
 $\{144, 145, \dots, 179\}$,
 the design is generated from

\adfLgap {\adfBfont
$\{99, 111, 164, 13, 125\}$,
$\{159, 109, 47, 96, 30\}$,
$\{106, 135, 2, 102, 151\}$,\adfsplit
$\{0, 1, 3, 123, 169\}$,
$\{0, 5, 39, 64, 74\}$,
$\{0, 6, 48, 103, 150\}$,\adfsplit
$\{0, 8, 28, 101, 170\}$,
$\{0, 19, 57, 113, 164\}$,
$\{0, 15, 67, 83, 167\}$,\adfsplit
$\{0, 7, 91, 114, 146\}$

}
\adfLgap \noindent by the mapping:
$x \mapsto x +  j \adfmod{144}$ for $x < 144$,
$x \mapsto (x +  j \adfmod{36}) + 144$ for $x \ge 144$,
$0 \le j < 144$.
\ADFvfyParStart{(180, ((10, 144, ((144, 1), (36, 1)))), ((16, 9), (36, 1)))} 

\adfDgap
\noindent{\boldmath $ 16^{10} 8^{1} $}~
With the point set $\{0, 1, \dots, 167\}$ partitioned into
 residue classes modulo $10$ for $\{0, 1, \dots, 159\}$, and
 $\{160, 161, \dots, 167\}$,
 the design is generated from

\adfLgap {\adfBfont
$\{27, 82, 164, 94, 28\}$,
$\{49, 117, 21, 150, 32\}$,
$\{0, 2, 38, 41, 165\}$,\adfsplit
$\{0, 7, 63, 72, 86\}$,
$\{0, 15, 34, 61, 77\}$,
$\{0, 4, 25, 51, 73\}$,\adfsplit
$\{0, 6, 24, 82, 131\}$,
$\{0, 5, 13, 57, 128\}$

}
\adfLgap \noindent by the mapping:
$x \mapsto x +  j \adfmod{160}$ for $x < 160$,
$x \mapsto (x +  j \adfmod{8}) + 160$ for $x \ge 160$,
$0 \le j < 160$.
\ADFvfyParStart{(168, ((8, 160, ((160, 1), (8, 1)))), ((16, 10), (8, 1)))} 

\adfDgap
\noindent{\boldmath $ 16^{10} 20^{1} $}~
With the point set $\{0, 1, \dots, 179\}$ partitioned into
 residue classes modulo $10$ for $\{0, 1, \dots, 159\}$, and
 $\{160, 161, \dots, 179\}$,
 the design is generated from

\adfLgap {\adfBfont
$\{142, 16, 95, 88, 124\}$,
$\{139, 154, 174, 22, 81\}$,
$\{0, 1, 3, 26, 155\}$,\adfsplit
$\{0, 4, 16, 93, 107\}$,
$\{0, 21, 45, 82, 168\}$,
$\{0, 17, 56, 122, 161\}$,\adfsplit
$\{0, 9, 51, 84, 95\}$,
$\{0, 13, 48, 111, 162\}$,
$\{0, 19, 46, 138, 176\}$,\adfsplit
$\{0, 32, 64, 96, 128\}$

}
\adfLgap \noindent by the mapping:
$x \mapsto x +  j \adfmod{160}$ for $x < 160$,
$x \mapsto (x +  j \adfmod{20}) + 160$ for $x \ge 160$,
$0 \le j < 160$
 for the first nine blocks,
$0 \le j < 32$
 for the last block.
\ADFvfyParStart{(180, ((9, 160, ((160, 1), (20, 1))), (1, 32, ((160, 1), (20, 1)))), ((16, 10), (20, 1)))} 

\adfDgap
\noindent{\boldmath $ 16^{10} 28^{1} $}~
With the point set $\{0, 1, \dots, 187\}$ partitioned into
 residue classes modulo $10$ for $\{0, 1, \dots, 159\}$, and
 $\{160, 161, \dots, 187\}$,
 the design is generated from

\adfLgap {\adfBfont
$\{37, 32, 74, 73, 177\}$,
$\{17, 10, 118, 165, 91\}$,
$\{153, 182, 100, 155, 88\}$,\adfsplit
$\{0, 3, 9, 22, 184\}$,
$\{0, 4, 29, 116, 132\}$,
$\{0, 11, 26, 88, 122\}$,\adfsplit
$\{0, 8, 54, 117, 168\}$,
$\{0, 18, 94, 139, 167\}$,
$\{0, 14, 31, 89, 113\}$,\adfsplit
$\{0, 23, 92, 127, 173\}$

}
\adfLgap \noindent by the mapping:
$x \mapsto x +  j \adfmod{160}$ for $x < 160$,
$x \mapsto (x +  j \adfmod{16}) + 160$ for $160 \le x < 176$,
$x \mapsto (x +  j \adfmod{8}) + 176$ for $176 \le x < 184$,
$x \mapsto (x +  j \adfmod{4}) + 184$ for $x \ge 184$,
$0 \le j < 160$.
\ADFvfyParStart{(188, ((10, 160, ((160, 1), (16, 1), (8, 1), (4, 1)))), ((16, 10), (28, 1)))} 

\adfDgap
\noindent{\boldmath $ 16^{10} 36^{1} $}~
With the point set $\{0, 1, \dots, 195\}$ partitioned into
 residue classes modulo $9$ for $\{0, 1, \dots, 143\}$,
 $\{144, 145, \dots, 159\}$, and
 $\{160, 161, \dots, 195\}$,
 the design is generated from

\adfLgap {\adfBfont
$\{157, 125, 56, 57, 160\}$,
$\{133, 110, 172, 40, 99\}$,
$\{6, 128, 46, 192, 14\}$,\adfsplit
$\{5, 129, 11, 151, 63\}$,
$\{0, 2, 5, 147, 164\}$,
$\{0, 12, 106, 153, 185\}$,\adfsplit
$\{0, 13, 28, 151, 161\}$,
$\{0, 24, 65, 107, 189\}$,
$\{0, 14, 109, 125, 194\}$,\adfsplit
$\{0, 10, 31, 98, 115\}$,
$\{0, 7, 55, 80, 174\}$,
$\{0, 4, 57, 101, 192\}$

}
\adfLgap \noindent by the mapping:
$x \mapsto x +  j \adfmod{144}$ for $x < 144$,
$x \mapsto (x +  j \adfmod{16}) + 144$ for $144 \le x < 160$,
$x \mapsto (x - 160 +  j \adfmod{36}) + 160$ for $x \ge 160$,
$0 \le j < 144$.
\ADFvfyParStart{(196, ((12, 144, ((144, 1), (16, 1), (36, 1)))), ((16, 9), (16, 1), (36, 1)))} 

\adfDgap
\noindent{\boldmath $ 16^{10} 40^{1} $}~
With the point set $\{0, 1, \dots, 199\}$ partitioned into
 residue classes modulo $10$ for $\{0, 1, \dots, 159\}$, and
 $\{160, 161, \dots, 199\}$,
 the design is generated from

\adfLgap {\adfBfont
$\{181, 39, 90, 12, 27\}$,
$\{116, 85, 83, 121, 37\}$,
$\{111, 140, 83, 36, 194\}$,\adfsplit
$\{27, 101, 76, 197, 10\}$,
$\{0, 1, 4, 138, 196\}$,
$\{0, 14, 67, 139, 164\}$,\adfsplit
$\{0, 6, 45, 89, 189\}$,
$\{0, 13, 37, 55, 172\}$,
$\{0, 9, 101, 117, 162\}$,\adfsplit
$\{0, 8, 62, 149, 168\}$,
$\{0, 7, 65, 126, 173\}$,
$\{0, 32, 64, 96, 128\}$

}
\adfLgap \noindent by the mapping:
$x \mapsto x +  j \adfmod{160}$ for $x < 160$,
$x \mapsto (x +  j \adfmod{40}) + 160$ for $x \ge 160$,
$0 \le j < 160$
 for the first 11 blocks,
$0 \le j < 32$
 for the last block.
\ADFvfyParStart{(200, ((11, 160, ((160, 1), (40, 1))), (1, 32, ((160, 1), (40, 1)))), ((16, 10), (40, 1)))} 

\adfDgap
\noindent{\boldmath $ 16^{11} 20^{1} $}~
With the point set $\{0, 1, \dots, 195\}$ partitioned into
 residue classes modulo $11$ for $\{0, 1, \dots, 175\}$, and
 $\{176, 177, \dots, 195\}$,
 the design is generated from

\adfLgap {\adfBfont
$\{71, 100, 165, 128, 142\}$,
$\{100, 51, 70, 193, 49\}$,
$\{63, 3, 155, 117, 76\}$,\adfsplit
$\{180, 22, 148, 112, 53\}$,
$\{0, 1, 7, 17, 113\}$,
$\{0, 26, 53, 98, 144\}$,\adfsplit
$\{0, 9, 56, 76, 124\}$,
$\{0, 4, 12, 87, 189\}$,
$\{0, 3, 136, 161, 187\}$,\adfsplit
$\{0, 5, 39, 74, 188\}$

}
\adfLgap \noindent by the mapping:
$x \mapsto x +  j \adfmod{176}$ for $x < 176$,
$x \mapsto (x +  j \adfmod{16}) + 176$ for $176 \le x < 192$,
$x \mapsto (x +  j \adfmod{4}) + 192$ for $x \ge 192$,
$0 \le j < 176$.
\ADFvfyParStart{(196, ((10, 176, ((176, 1), (16, 1), (4, 1)))), ((16, 11), (20, 1)))} 

\adfDgap
\noindent{\boldmath $ 16^{11} 40^{1} $}~
With the point set $\{0, 1, \dots, 215\}$ partitioned into
 residue classes modulo $11$ for $\{0, 1, \dots, 175\}$, and
 $\{176, 177, \dots, 215\}$,
 the design is generated from

\adfLgap {\adfBfont
$\{23, 35, 188, 26, 129\}$,
$\{92, 212, 129, 155, 90\}$,
$\{51, 200, 140, 69, 31\}$,\adfsplit
$\{10, 2, 193, 124, 171\}$,
$\{0, 1, 14, 31, 213\}$,
$\{0, 4, 46, 124, 188\}$,\adfsplit
$\{0, 29, 74, 115, 204\}$,
$\{0, 5, 53, 80, 112\}$,
$\{0, 6, 16, 97, 157\}$,\adfsplit
$\{0, 36, 93, 136, 193\}$,
$\{0, 21, 49, 72, 197\}$,
$\{0, 24, 92, 142, 207\}$

}
\adfLgap \noindent by the mapping:
$x \mapsto x +  j \adfmod{176}$ for $x < 176$,
$x \mapsto (x - 176 + 2 j \adfmod{32}) + 176$ for $176 \le x < 208$,
$x \mapsto (x +  j \adfmod{8}) + 208$ for $x \ge 208$,
$0 \le j < 176$.
\ADFvfyParStart{(216, ((12, 176, ((176, 1), (32, 2), (8, 1)))), ((16, 11), (40, 1)))} 

\adfDgap
\noindent{\boldmath $ 16^{12} 12^{1} $}~
With the point set $\{0, 1, \dots, 203\}$ partitioned into
 residue classes modulo $12$ for $\{0, 1, \dots, 191\}$, and
 $\{192, 193, \dots, 203\}$,
 the design is generated from

\adfLgap {\adfBfont
$\{166, 110, 88, 203, 139\}$,
$\{137, 136, 128, 134, 162\}$,
$\{137, 44, 83, 157, 93\}$,\adfsplit
$\{0, 4, 41, 46, 59\}$,
$\{0, 11, 32, 103, 117\}$,
$\{0, 7, 40, 105, 122\}$,\adfsplit
$\{0, 16, 63, 125, 195\}$,
$\{0, 38, 90, 135, 200\}$,
$\{0, 19, 50, 80, 123\}$,\adfsplit
$\{0, 15, 68, 91, 126\}$

}
\adfLgap \noindent by the mapping:
$x \mapsto x +  j \adfmod{192}$ for $x < 192$,
$x \mapsto (x +  j \adfmod{12}) + 192$ for $x \ge 192$,
$0 \le j < 192$.
\ADFvfyParStart{(204, ((10, 192, ((192, 1), (12, 1)))), ((16, 12), (12, 1)))} 

\adfDgap
\noindent{\boldmath $ 16^{12} 52^{1} $}~
With the point set $\{0, 1, \dots, 243\}$ partitioned into
 residue classes modulo $12$ for $\{0, 1, \dots, 191\}$, and
 $\{192, 193, \dots, 243\}$,
 the design is generated from

\adfLgap {\adfBfont
$\{243, 24, 37, 143, 114\}$,
$\{217, 169, 90, 168, 15\}$,
$\{162, 92, 96, 234, 65\}$,\adfsplit
$\{139, 80, 160, 98, 205\}$,
$\{131, 236, 7, 89, 123\}$,
$\{141, 138, 166, 173, 205\}$,\adfsplit
$\{0, 14, 44, 173, 194\}$,
$\{0, 22, 45, 149, 196\}$,
$\{0, 2, 123, 139, 227\}$,\adfsplit
$\{0, 5, 57, 182, 235\}$,
$\{0, 17, 64, 118, 236\}$,
$\{0, 20, 46, 131, 205\}$,\adfsplit
$\{0, 9, 58, 98, 109\}$,
$\{0, 6, 56, 93, 212\}$

}
\adfLgap \noindent by the mapping:
$x \mapsto x +  j \adfmod{192}$ for $x < 192$,
$x \mapsto (x +  j \adfmod{48}) + 192$ for $192 \le x < 240$,
$x \mapsto (x +  j \adfmod{4}) + 240$ for $x \ge 240$,
$0 \le j < 192$.
\ADFvfyParStart{(244, ((14, 192, ((192, 1), (48, 1), (4, 1)))), ((16, 12), (52, 1)))} 

\adfDgap
\noindent{\boldmath $ 16^{13} 4^{1} $}~
With the point set $\{0, 1, \dots, 211\}$ partitioned into
 residue classes modulo $13$ for $\{0, 1, \dots, 207\}$, and
 $\{208, 209, 210, 211\}$,
 the design is generated from

\adfLgap {\adfBfont
$\{208, 56, 62, 141, 175\}$,
$\{151, 67, 149, 49, 98\}$,
$\{25, 173, 72, 205, 136\}$,\adfsplit
$\{17, 50, 127, 195, 60\}$,
$\{0, 4, 29, 48, 166\}$,
$\{0, 9, 20, 112, 167\}$,\adfsplit
$\{0, 3, 27, 152, 173\}$,
$\{0, 5, 17, 132, 168\}$,
$\{0, 14, 72, 88, 142\}$,\adfsplit
$\{0, 1, 8, 23, 122\}$

}
\adfLgap \noindent by the mapping:
$x \mapsto x +  j \adfmod{208}$ for $x < 208$,
$x \mapsto (x +  j \adfmod{4}) + 208$ for $x \ge 208$,
$0 \le j < 208$.
\ADFvfyParStart{(212, ((10, 208, ((208, 1), (4, 1)))), ((16, 13), (4, 1)))} 

\adfDgap
\noindent{\boldmath $ 16^{13} 24^{1} $}~
With the point set $\{0, 1, \dots, 231\}$ partitioned into
 residue classes modulo $13$ for $\{0, 1, \dots, 207\}$, and
 $\{208, 209, \dots, 231\}$,
 the design is generated from

\adfLgap {\adfBfont
$\{210, 141, 104, 51, 138\}$,
$\{154, 18, 99, 66, 7\}$,
$\{217, 32, 121, 63, 142\}$,\adfsplit
$\{215, 41, 108, 42, 112\}$,
$\{77, 68, 199, 87, 41\}$,
$\{0, 2, 85, 99, 222\}$,\adfsplit
$\{0, 7, 22, 69, 226\}$,
$\{0, 17, 165, 183, 221\}$,
$\{0, 20, 49, 113, 154\}$,\adfsplit
$\{0, 5, 45, 68, 80\}$,
$\{0, 16, 44, 100, 151\}$,
$\{0, 6, 30, 38, 132\}$

}
\adfLgap \noindent by the mapping:
$x \mapsto x +  j \adfmod{208}$ for $x < 208$,
$x \mapsto (x - 208 + 3 j \adfmod{24}) + 208$ for $x \ge 208$,
$0 \le j < 208$.
\ADFvfyParStart{(232, ((12, 208, ((208, 1), (24, 3)))), ((16, 13), (24, 1)))} 

\adfDgap
\noindent{\boldmath $ 16^{13} 44^{1} $}~
With the point set $\{0, 1, \dots, 251\}$ partitioned into
 residue classes modulo $13$ for $\{0, 1, \dots, 207\}$, and
 $\{208, 209, \dots, 251\}$,
 the design is generated from

\adfLgap {\adfBfont
$\{230, 128, 170, 56, 195\}$,
$\{234, 86, 20, 173, 110\}$,
$\{162, 202, 208, 203, 54\}$,\adfsplit
$\{207, 78, 243, 32, 60\}$,
$\{247, 77, 70, 153, 183\}$,
$\{234, 32, 1, 89, 139\}$,\adfsplit
$\{149, 229, 105, 198, 76\}$,
$\{0, 5, 22, 103, 248\}$,
$\{0, 16, 36, 128, 160\}$,\adfsplit
$\{0, 4, 14, 51, 89\}$,
$\{0, 12, 27, 109, 225\}$,
$\{0, 8, 19, 53, 62\}$,\adfsplit
$\{0, 3, 134, 140, 221\}$,
$\{0, 2, 23, 58, 227\}$

}
\adfLgap \noindent by the mapping:
$x \mapsto x +  j \adfmod{208}$ for $x < 208$,
$x \mapsto (x - 208 + 2 j \adfmod{32}) + 208$ for $208 \le x < 240$,
$x \mapsto (x +  j \adfmod{8}) + 240$ for $240 \le x < 248$,
$x \mapsto (x +  j \adfmod{4}) + 248$ for $x \ge 248$,
$0 \le j < 208$.
\ADFvfyParStart{(252, ((14, 208, ((208, 1), (32, 2), (8, 1), (4, 1)))), ((16, 13), (44, 1)))} 

\adfDgap
\noindent{\boldmath $ 16^{14} 36^{1} $}~
With the point set $\{0, 1, \dots, 259\}$ partitioned into
 residue classes modulo $14$ for $\{0, 1, \dots, 223\}$, and
 $\{224, 225, \dots, 259\}$,
 the design is generated from

\adfLgap {\adfBfont
$\{92, 132, 226, 88, 80\}$,
$\{163, 78, 58, 233, 81\}$,
$\{99, 180, 229, 20, 80\}$,\adfsplit
$\{103, 246, 189, 55, 20\}$,
$\{47, 57, 122, 4, 26\}$,
$\{27, 60, 78, 149, 255\}$,\adfsplit
$\{0, 26, 76, 177, 241\}$,
$\{0, 7, 41, 162, 255\}$,
$\{0, 1, 108, 157, 224\}$,\adfsplit
$\{0, 9, 129, 209, 229\}$,
$\{0, 2, 13, 29, 59\}$,
$\{0, 17, 54, 131, 163\}$,\adfsplit
$\{0, 6, 45, 133, 158\}$,
$\{0, 5, 63, 99, 137\}$

}
\adfLgap \noindent by the mapping:
$x \mapsto x +  j \adfmod{224}$ for $x < 224$,
$x \mapsto (x +  j \adfmod{28}) + 224$ for $224 \le x < 252$,
$x \mapsto (x - 252 +  j \adfmod{8}) + 252$ for $x \ge 252$,
$0 \le j < 224$.
\ADFvfyParStart{(260, ((14, 224, ((224, 1), (28, 1), (8, 1)))), ((16, 14), (36, 1)))} 

\adfDgap
\noindent{\boldmath $ 16^{15} 8^{1} $}~
With the point set $\{0, 1, \dots, 247\}$ partitioned into
 residue classes modulo $15$ for $\{0, 1, \dots, 239\}$, and
 $\{240, 241, \dots, 247\}$,
 the design is generated from

\adfLgap {\adfBfont
$\{98, 145, 213, 161, 7\}$,
$\{240, 239, 6, 229, 2\}$,
$\{89, 158, 201, 137, 91\}$,\adfsplit
$\{225, 41, 70, 121, 192\}$,
$\{209, 177, 53, 227, 221\}$,
$\{0, 1, 54, 77, 245\}$,\adfsplit
$\{0, 25, 61, 119, 159\}$,
$\{0, 8, 49, 108, 145\}$,
$\{0, 26, 57, 99, 127\}$,\adfsplit
$\{0, 11, 35, 158, 213\}$,
$\{0, 19, 39, 126, 148\}$,
$\{0, 5, 14, 79, 157\}$

}
\adfLgap \noindent by the mapping:
$x \mapsto x +  j \adfmod{240}$ for $x < 240$,
$x \mapsto (x +  j \adfmod{8}) + 240$ for $x \ge 240$,
$0 \le j < 240$.
\ADFvfyParStart{(248, ((12, 240, ((240, 1), (8, 1)))), ((16, 15), (8, 1)))} 

\adfDgap
\noindent{\boldmath $ 16^{15} 28^{1} $}~
With the point set $\{0, 1, \dots, 267\}$ partitioned into
 residue classes modulo $15$ for $\{0, 1, \dots, 239\}$, and
 $\{240, 241, \dots, 267\}$,
 the design is generated from

\adfLgap {\adfBfont
$\{140, 0, 67, 63, 149\}$,
$\{61, 132, 163, 216, 80\}$,
$\{151, 219, 124, 130, 254\}$,\adfsplit
$\{28, 76, 154, 228, 192\}$,
$\{47, 222, 217, 266, 180\}$,
$\{81, 257, 103, 48, 147\}$,\adfsplit
$\{0, 1, 109, 111, 260\}$,
$\{0, 8, 64, 80, 123\}$,
$\{0, 10, 34, 128, 153\}$,\adfsplit
$\{0, 14, 32, 49, 193\}$,
$\{0, 7, 134, 227, 240\}$,
$\{0, 12, 171, 194, 241\}$,\adfsplit
$\{0, 3, 186, 214, 254\}$,
$\{0, 11, 50, 148, 189\}$

}
\adfLgap \noindent by the mapping:
$x \mapsto x +  j \adfmod{240}$ for $x < 240$,
$x \mapsto (x +  j \adfmod{16}) + 240$ for $240 \le x < 256$,
$x \mapsto (x +  j \adfmod{8}) + 256$ for $256 \le x < 264$,
$x \mapsto (x +  j \adfmod{4}) + 264$ for $x \ge 264$,
$0 \le j < 240$.
\ADFvfyParStart{(268, ((14, 240, ((240, 1), (16, 1), (8, 1), (4, 1)))), ((16, 15), (28, 1)))} 

\adfDgap
\noindent{\boldmath $ 16^{16} 20^{1} $}~
With the point set $\{0, 1, \dots, 275\}$ partitioned into
 residue classes modulo $16$ for $\{0, 1, \dots, 255\}$, and
 $\{256, 257, \dots, 275\}$,
 the design is generated from

\adfLgap {\adfBfont
$\{274, 87, 248, 194, 233\}$,
$\{269, 45, 57, 1, 190\}$,
$\{264, 231, 113, 210, 160\}$,\adfsplit
$\{264, 107, 67, 149, 94\}$,
$\{260, 18, 137, 214, 139\}$,
$\{41, 197, 216, 127, 65\}$,\adfsplit
$\{110, 136, 133, 144, 201\}$,
$\{0, 1, 38, 141, 158\}$,
$\{0, 14, 66, 88, 197\}$,\adfsplit
$\{0, 5, 41, 84, 171\}$,
$\{0, 4, 29, 35, 143\}$,
$\{0, 9, 72, 92, 102\}$,\adfsplit
$\{0, 28, 61, 106, 155\}$,
$\{0, 7, 53, 187, 205\}$

}
\adfLgap \noindent by the mapping:
$x \mapsto x +  j \adfmod{256}$ for $x < 256$,
$x \mapsto (x +  j \adfmod{16}) + 256$ for $256 \le x < 272$,
$x \mapsto (x +  j \adfmod{4}) + 272$ for $x \ge 272$,
$0 \le j < 256$.
\ADFvfyParStart{(276, ((14, 256, ((256, 1), (16, 1), (4, 1)))), ((16, 16), (20, 1)))} 

\adfDgap
\noindent{\boldmath $ 16^{16} 40^{1} $}~
With the point set $\{0, 1, \dots, 295\}$ partitioned into
 residue classes modulo $16$ for $\{0, 1, \dots, 255\}$, and
 $\{256, 257, \dots, 295\}$,
 the design is generated from

\adfLgap {\adfBfont
$\{294, 249, 2, 55, 84\}$,
$\{288, 186, 232, 127, 149\}$,
$\{272, 199, 90, 238, 125\}$,\adfsplit
$\{282, 211, 48, 207, 73\}$,
$\{277, 168, 102, 231, 145\}$,
$\{287, 227, 121, 95, 40\}$,\adfsplit
$\{263, 40, 6, 147, 226\}$,
$\{275, 185, 54, 152, 193\}$,
$\{0, 5, 47, 243, 264\}$,\adfsplit
$\{0, 6, 78, 146, 286\}$,
$\{0, 1, 20, 120, 205\}$,
$\{0, 15, 73, 103, 157\}$,\adfsplit
$\{0, 11, 49, 76, 211\}$,
$\{0, 10, 50, 67, 111\}$,
$\{0, 2, 14, 89, 166\}$,\adfsplit
$\{0, 3, 24, 31, 126\}$

}
\adfLgap \noindent by the mapping:
$x \mapsto x +  j \adfmod{256}$ for $x < 256$,
$x \mapsto (x +  j \adfmod{32}) + 256$ for $256 \le x < 288$,
$x \mapsto (x +  j \adfmod{8}) + 288$ for $x \ge 288$,
$0 \le j < 256$.
\ADFvfyParStart{(296, ((16, 256, ((256, 1), (32, 1), (8, 1)))), ((16, 16), (40, 1)))} 

\adfDgap
\noindent{\boldmath $ 16^{17} 12^{1} $}~
With the point set $\{0, 1, \dots, 283\}$ partitioned into
 residue classes modulo $17$ for $\{0, 1, \dots, 271\}$, and
 $\{272, 273, \dots, 283\}$,
 the design is generated from

\adfLgap {\adfBfont
$\{280, 124, 185, 215, 250\}$,
$\{278, 121, 82, 39, 75\}$,
$\{276, 51, 236, 82, 166\}$,\adfsplit
$\{132, 78, 5, 174, 215\}$,
$\{193, 243, 26, 49, 191\}$,
$\{90, 117, 180, 268, 109\}$,\adfsplit
$\{0, 1, 4, 15, 156\}$,
$\{0, 6, 16, 53, 164\}$,
$\{0, 21, 59, 81, 160\}$,\adfsplit
$\{0, 9, 67, 95, 143\}$,
$\{0, 25, 97, 123, 172\}$,
$\{0, 5, 29, 109, 208\}$,\adfsplit
$\{0, 18, 74, 140, 180\}$,
$\{0, 12, 32, 45, 89\}$

}
\adfLgap \noindent by the mapping:
$x \mapsto x +  j \adfmod{272}$ for $x < 272$,
$x \mapsto (x +  j \adfmod{8}) + 272$ for $272 \le x < 280$,
$x \mapsto (x +  j \adfmod{4}) + 280$ for $x \ge 280$,
$0 \le j < 272$.
\ADFvfyParStart{(284, ((14, 272, ((272, 1), (8, 1), (4, 1)))), ((16, 17), (12, 1)))} 

\adfDgap
\noindent{\boldmath $ 17^{8} 13^{1} $}~
With the point set $\{0, 1, \dots, 148\}$ partitioned into
 residue classes modulo $8$ for $\{0, 1, \dots, 135\}$, and
 $\{136, 137, \dots, 148\}$,
 the design is generated from

\adfLgap {\adfBfont
$\{16, 60, 71, 83, 34\}$,
$\{29, 36, 94, 136, 122\}$,
$\{0, 1, 84, 98, 137\}$,\adfsplit
$\{0, 2, 31, 35, 77\}$,
$\{0, 5, 20, 41, 111\}$,
$\{0, 10, 27, 89, 144\}$,\adfsplit
$\{0, 3, 9, 22, 85\}$,
$\{0, 34, 68, 102, 148\}$

}
\adfLgap \noindent by the mapping:
$x \mapsto x +  j \adfmod{136}$ for $x < 136$,
$x \mapsto (x +  j \adfmod{8}) + 136$ for $136 \le x < 144$,
$x \mapsto (x +  j \adfmod{4}) + 144$ for $144 \le x < 148$,
$148 \mapsto 148$,
$0 \le j < 136$
 for the first seven blocks,
$0 \le j < 34$
 for the last block.
\ADFvfyParStart{(149, ((7, 136, ((136, 1), (8, 1), (4, 1), (1, 1))), (1, 34, ((136, 1), (8, 1), (4, 1), (1, 1)))), ((17, 8), (13, 1)))} 

\adfDgap
\noindent{\boldmath $ 17^{8} 33^{1} $}~
With the point set $\{0, 1, \dots, 168\}$ partitioned into
 residue classes modulo $8$ for $\{0, 1, \dots, 135\}$, and
 $\{136, 137, \dots, 168\}$,
 the design is generated from

\adfLgap {\adfBfont
$\{160, 126, 98, 97, 123\}$,
$\{80, 75, 58, 45, 151\}$,
$\{93, 27, 132, 111, 141\}$,\adfsplit
$\{0, 2, 51, 61, 167\}$,
$\{0, 12, 45, 95, 160\}$,
$\{0, 4, 11, 82, 149\}$,\adfsplit
$\{0, 15, 89, 109, 142\}$,
$\{0, 6, 43, 79, 98\}$,
$\{0, 9, 76, 122, 166\}$,\adfsplit
$\{0, 34, 68, 102, 168\}$

}
\adfLgap \noindent by the mapping:
$x \mapsto x +  j \adfmod{136}$ for $x < 136$,
$x \mapsto (x - 136 + 4 j \adfmod{32}) + 136$ for $136 \le x < 168$,
$168 \mapsto 168$,
$0 \le j < 136$
 for the first nine blocks,
$0 \le j < 34$
 for the last block.
\ADFvfyParStart{(169, ((9, 136, ((136, 1), (32, 4), (1, 1))), (1, 34, ((136, 1), (32, 4), (1, 1)))), ((17, 8), (33, 1)))} 

\adfDgap
\noindent{\boldmath $ 17^{12} 9^{1} $}~
With the point set $\{0, 1, \dots, 212\}$ partitioned into
 residue classes modulo $12$ for $\{0, 1, \dots, 203\}$, and
 $\{204, 205, \dots, 212\}$,
 the design is generated from

\adfLgap {\adfBfont
$\{1, 60, 100, 21, 56\}$,
$\{135, 193, 4, 58, 198\}$,
$\{27, 28, 164, 25, 139\}$,\adfsplit
$\{0, 6, 13, 187, 204\}$,
$\{0, 21, 47, 154, 205\}$,
$\{0, 14, 45, 130, 148\}$,\adfsplit
$\{0, 16, 38, 138, 167\}$,
$\{0, 9, 41, 87, 98\}$,
$\{0, 19, 52, 80, 161\}$,\adfsplit
$\{0, 8, 42, 91, 118\}$,
$\{0, 51, 102, 153, 212\}$

}
\adfLgap \noindent by the mapping:
$x \mapsto x +  j \adfmod{204}$ for $x < 204$,
$x \mapsto (x - 204 + 2 j \adfmod{8}) + 204$ for $204 \le x < 212$,
$212 \mapsto 212$,
$0 \le j < 204$
 for the first ten blocks,
$0 \le j < 51$
 for the last block.
\ADFvfyParStart{(213, ((10, 204, ((204, 1), (8, 2), (1, 1))), (1, 51, ((204, 1), (8, 2), (1, 1)))), ((17, 12), (9, 1)))} 

\adfDgap
\noindent{\boldmath $ 17^{12} 29^{1} $}~
With the point set $\{0, 1, \dots, 232\}$ partitioned into
 residue classes modulo $12$ for $\{0, 1, \dots, 203\}$, and
 $\{204, 205, \dots, 232\}$,
 the design is generated from

\adfLgap {\adfBfont
$\{42, 170, 151, 184, 222\}$,
$\{110, 120, 30, 148, 33\}$,
$\{205, 105, 187, 40, 128\}$,\adfsplit
$\{66, 159, 132, 19, 58\}$,
$\{230, 193, 20, 83, 118\}$,
$\{0, 1, 6, 79, 164\}$,\adfsplit
$\{0, 15, 49, 107, 149\}$,
$\{0, 7, 44, 61, 219\}$,
$\{0, 13, 56, 81, 225\}$,\adfsplit
$\{0, 4, 26, 71, 224\}$,
$\{0, 2, 11, 32, 186\}$,
$\{0, 16, 69, 121, 206\}$,\adfsplit
$\{0, 51, 102, 153, 232\}$

}
\adfLgap \noindent by the mapping:
$x \mapsto x +  j \adfmod{204}$ for $x < 204$,
$x \mapsto (x - 204 + 2 j \adfmod{24}) + 204$ for $204 \le x < 228$,
$x \mapsto (x +  j \adfmod{4}) + 228$ for $228 \le x < 232$,
$232 \mapsto 232$,
$0 \le j < 204$
 for the first 12 blocks,
$0 \le j < 51$
 for the last block.
\ADFvfyParStart{(233, ((12, 204, ((204, 1), (24, 2), (4, 1), (1, 1))), (1, 51, ((204, 1), (24, 2), (4, 1), (1, 1)))), ((17, 12), (29, 1)))} 

\adfDgap
\noindent{\boldmath $ 17^{12} 49^{1} $}~
With the point set $\{0, 1, \dots, 252\}$ partitioned into
 residue classes modulo $12$ for $\{0, 1, \dots, 203\}$, and
 $\{204, 205, \dots, 252\}$,
 the design is generated from

\adfLgap {\adfBfont
$\{217, 113, 190, 93, 162\}$,
$\{114, 152, 243, 178, 147\}$,
$\{195, 95, 168, 89, 216\}$,\adfsplit
$\{92, 180, 237, 79, 18\}$,
$\{223, 122, 35, 192, 32\}$,
$\{36, 1, 164, 18, 40\}$,\adfsplit
$\{0, 7, 37, 119, 230\}$,
$\{0, 1, 53, 189, 221\}$,
$\{0, 9, 19, 142, 235\}$,\adfsplit
$\{0, 25, 75, 164, 222\}$,
$\{0, 14, 113, 172, 214\}$,
$\{0, 8, 29, 63, 86\}$,\adfsplit
$\{0, 2, 45, 111, 248\}$,
$\{0, 11, 94, 148, 224\}$,
$\{0, 51, 102, 153, 252\}$

}
\adfLgap \noindent by the mapping:
$x \mapsto x +  j \adfmod{204}$ for $x < 204$,
$x \mapsto (x - 204 + 4 j \adfmod{48}) + 204$ for $204 \le x < 252$,
$252 \mapsto 252$,
$0 \le j < 204$
 for the first 14 blocks,
$0 \le j < 51$
 for the last block.
\ADFvfyParStart{(253, ((14, 204, ((204, 1), (48, 4), (1, 1))), (1, 51, ((204, 1), (48, 4), (1, 1)))), ((17, 12), (49, 1)))} 

\adfDgap
\noindent{\boldmath $ 17^{16} 5^{1} $}~
With the point set $\{0, 1, \dots, 276\}$ partitioned into
 residue classes modulo $16$ for $\{0, 1, \dots, 271\}$, and
 $\{272, 273, 274, 275, 276\}$,
 the design is generated from

\adfLgap {\adfBfont
$\{205, 145, 41, 174, 122\}$,
$\{242, 172, 52, 144, 6\}$,
$\{91, 167, 30, 12, 65\}$,\adfsplit
$\{74, 40, 87, 205, 117\}$,
$\{145, 160, 187, 201, 138\}$,
$\{0, 9, 20, 110, 181\}$,\adfsplit
$\{0, 2, 87, 145, 272\}$,
$\{0, 6, 39, 51, 105\}$,
$\{0, 4, 44, 113, 130\}$,\adfsplit
$\{0, 10, 65, 84, 179\}$,
$\{0, 1, 25, 150, 200\}$,
$\{0, 3, 8, 124, 213\}$,\adfsplit
$\{0, 21, 78, 115, 153\}$,
$\{0, 68, 136, 204, 276\}$

}
\adfLgap \noindent by the mapping:
$x \mapsto x +  j \adfmod{272}$ for $x < 272$,
$x \mapsto (x +  j \adfmod{4}) + 272$ for $272 \le x < 276$,
$276 \mapsto 276$,
$0 \le j < 272$
 for the first 13 blocks,
$0 \le j < 68$
 for the last block.
\ADFvfyParStart{(277, ((13, 272, ((272, 1), (4, 1), (1, 1))), (1, 68, ((272, 1), (4, 1), (1, 1)))), ((17, 16), (5, 1)))} 

\adfDgap
\noindent{\boldmath $ 20^{8} 40^{1} $}~
With the point set $\{0, 1, \dots, 199\}$ partitioned into
 residue classes modulo $8$ for $\{0, 1, \dots, 159\}$, and
 $\{160, 161, \dots, 199\}$,
 the design is generated from

\adfLgap {\adfBfont
$\{103, 29, 18, 52, 78\}$,
$\{27, 46, 189, 128, 13\}$,
$\{109, 26, 168, 20, 11\}$,\adfsplit
$\{71, 147, 66, 8, 161\}$,
$\{0, 1, 44, 143, 196\}$,
$\{0, 12, 105, 125, 189\}$,\adfsplit
$\{0, 4, 91, 133, 198\}$,
$\{0, 2, 30, 52, 180\}$,
$\{0, 7, 36, 46, 167\}$,\adfsplit
$\{0, 3, 41, 95, 166\}$,
$\{0, 13, 70, 107, 199\}$

}
\adfLgap \noindent by the mapping:
$x \mapsto x +  j \adfmod{160}$ for $x < 160$,
$x \mapsto (x +  j \adfmod{40}) + 160$ for $x \ge 160$,
$0 \le j < 160$.
\ADFvfyParStart{(200, ((11, 160, ((160, 1), (40, 1)))), ((20, 8), (40, 1)))} 

\adfDgap
\noindent{\boldmath $ 20^{9} 40^{1} $}~
With the point set $\{0, 1, \dots, 219\}$ partitioned into
 residue classes modulo $9$ for $\{0, 1, \dots, 179\}$, and
 $\{180, 181, \dots, 219\}$,
 the design is generated from

\adfLgap {\adfBfont
$\{208, 7, 2, 168, 111\}$,
$\{64, 16, 207, 114, 107\}$,
$\{191, 57, 140, 115, 91\}$,\adfsplit
$\{74, 209, 82, 7, 76\}$,
$\{157, 170, 105, 183, 128\}$,
$\{0, 22, 68, 141, 187\}$,\adfsplit
$\{0, 20, 53, 84, 140\}$,
$\{0, 15, 70, 100, 121\}$,
$\{0, 10, 26, 88, 200\}$,\adfsplit
$\{0, 1, 4, 143, 218\}$,
$\{0, 11, 77, 163, 196\}$,
$\{0, 12, 47, 148, 182\}$

}
\adfLgap \noindent by the mapping:
$x \mapsto x +  j \adfmod{180}$ for $x < 180$,
$x \mapsto (x - 180 + 2 j \adfmod{40}) + 180$ for $x \ge 180$,
$0 \le j < 180$.
\ADFvfyParStart{(220, ((12, 180, ((180, 1), (40, 2)))), ((20, 9), (40, 1)))} 

\adfDgap
\noindent{\boldmath $ 20^{10} 36^{1} $}~
With the point set $\{0, 1, \dots, 235\}$ partitioned into
 residue classes modulo $9$ for $\{0, 1, \dots, 179\}$,
 $\{180, 181, \dots, 199\}$, and
 $\{200, 201, \dots, 235\}$,
 the design is generated from

\adfLgap {\adfBfont
$\{30, 181, 74, 177, 99\}$,
$\{85, 54, 70, 158, 119\}$,
$\{135, 134, 235, 40, 64\}$,\adfsplit
$\{197, 34, 32, 233, 87\}$,
$\{199, 70, 225, 59, 13\}$,
$\{0, 3, 7, 13, 181\}$,\adfsplit
$\{0, 5, 22, 197, 220\}$,
$\{0, 23, 137, 196, 233\}$,
$\{0, 14, 35, 93, 119\}$,\adfsplit
$\{0, 19, 60, 116, 213\}$,
$\{0, 8, 121, 151, 229\}$,
$\{0, 40, 91, 138, 200\}$,\adfsplit
$\{0, 20, 48, 148, 223\}$,
$\{0, 12, 74, 142, 226\}$

}
\adfLgap \noindent by the mapping:
$x \mapsto x +  j \adfmod{180}$ for $x < 180$,
$x \mapsto (x +  j \adfmod{20}) + 180$ for $180 \le x < 200$,
$x \mapsto (x - 200 +  j \adfmod{36}) + 200$ for $x \ge 200$,
$0 \le j < 180$.
\ADFvfyParStart{(236, ((14, 180, ((180, 1), (20, 1), (36, 1)))), ((20, 9), (20, 1), (36, 1)))} 

\adfDgap
\noindent{\boldmath $ 20^{10} 40^{1} $}~
With the point set $\{0, 1, \dots, 239\}$ partitioned into
 residue classes modulo $10$ for $\{0, 1, \dots, 199\}$, and
 $\{200, 201, \dots, 239\}$,
 the design is generated from

\adfLgap {\adfBfont
$\{156, 194, 172, 208, 11\}$,
$\{103, 104, 137, 168, 89\}$,
$\{186, 202, 130, 52, 89\}$,\adfsplit
$\{47, 173, 212, 122, 54\}$,
$\{175, 3, 90, 151, 102\}$,
$\{0, 2, 45, 108, 114\}$,\adfsplit
$\{0, 19, 42, 147, 228\}$,
$\{0, 18, 116, 141, 221\}$,
$\{0, 3, 47, 171, 218\}$,\adfsplit
$\{0, 11, 118, 154, 217\}$,
$\{0, 8, 62, 173, 202\}$,
$\{0, 5, 26, 109, 213\}$,\adfsplit
$\{0, 4, 13, 71, 235\}$

}
\adfLgap \noindent by the mapping:
$x \mapsto x +  j \adfmod{200}$ for $x < 200$,
$x \mapsto (x +  j \adfmod{40}) + 200$ for $x \ge 200$,
$0 \le j < 200$.
\ADFvfyParStart{(240, ((13, 200, ((200, 1), (40, 1)))), ((20, 10), (40, 1)))} 

\adfDgap
\noindent{\boldmath $ 20^{11} 40^{1} $}~
With the point set $\{0, 1, \dots, 259\}$ partitioned into
 residue classes modulo $11$ for $\{0, 1, \dots, 219\}$, and
 $\{220, 221, \dots, 259\}$,
 the design is generated from

\adfLgap {\adfBfont
$\{64, 183, 200, 118, 225\}$,
$\{138, 69, 180, 89, 76\}$,
$\{121, 242, 27, 78, 113\}$,\adfsplit
$\{166, 141, 18, 226, 48\}$,
$\{162, 91, 199, 31, 109\}$,
$\{136, 133, 228, 137, 105\}$,\adfsplit
$\{0, 2, 47, 164, 252\}$,
$\{0, 23, 57, 182, 226\}$,
$\{0, 6, 46, 70, 146\}$,\adfsplit
$\{0, 10, 26, 85, 243\}$,
$\{0, 15, 96, 137, 239\}$,
$\{0, 21, 128, 191, 237\}$,\adfsplit
$\{0, 5, 19, 152, 251\}$,
$\{0, 9, 36, 48, 115\}$

}
\adfLgap \noindent by the mapping:
$x \mapsto x +  j \adfmod{220}$ for $x < 220$,
$x \mapsto (x - 220 + 2 j \adfmod{40}) + 220$ for $x \ge 220$,
$0 \le j < 220$.
\ADFvfyParStart{(260, ((14, 220, ((220, 1), (40, 2)))), ((20, 11), (40, 1)))} 

\adfDgap
\noindent{\boldmath $ 21^{8} 9^{1} $}~
With the point set $\{0, 1, \dots, 176\}$ partitioned into
 residue classes modulo $8$ for $\{0, 1, \dots, 167\}$, and
 $\{168, 169, \dots, 176\}$,
 the design is generated from

\adfLgap {\adfBfont
$\{51, 90, 4, 158, 141\}$,
$\{150, 51, 56, 171, 106\}$,
$\{0, 1, 3, 21, 175\}$,\adfsplit
$\{0, 9, 22, 37, 71\}$,
$\{0, 6, 29, 65, 108\}$,
$\{0, 7, 19, 52, 77\}$,\adfsplit
$\{0, 11, 38, 92, 133\}$,
$\{0, 4, 30, 105, 115\}$,
$\{0, 42, 84, 126, 176\}$

}
\adfLgap \noindent by the mapping:
$x \mapsto x +  j \adfmod{168}$ for $x < 168$,
$x \mapsto (x +  j \adfmod{8}) + 168$ for $168 \le x < 176$,
$176 \mapsto 176$,
$0 \le j < 168$
 for the first eight blocks,
$0 \le j < 42$
 for the last block.
\ADFvfyParStart{(177, ((8, 168, ((168, 1), (8, 1), (1, 1))), (1, 42, ((168, 1), (8, 1), (1, 1)))), ((21, 8), (9, 1)))} 

\adfDgap
\noindent{\boldmath $ 21^{8} 29^{1} $}~
With the point set $\{0, 1, \dots, 196\}$ partitioned into
 residue classes modulo $8$ for $\{0, 1, \dots, 167\}$, and
 $\{168, 169, \dots, 196\}$,
 the design is generated from

\adfLgap {\adfBfont
$\{103, 33, 76, 91, 0\}$,
$\{94, 129, 155, 56, 77\}$,
$\{43, 93, 100, 56, 186\}$,\adfsplit
$\{0, 1, 3, 140, 149\}$,
$\{0, 4, 55, 66, 172\}$,
$\{0, 10, 46, 85, 185\}$,\adfsplit
$\{0, 18, 81, 115, 179\}$,
$\{0, 6, 47, 114, 193\}$,
$\{0, 14, 100, 123, 180\}$,\adfsplit
$\{0, 5, 30, 79, 183\}$,
$\{0, 42, 84, 126, 196\}$

}
\adfLgap \noindent by the mapping:
$x \mapsto x +  j \adfmod{168}$ for $x < 168$,
$x \mapsto (x +  j \adfmod{28}) + 168$ for $168 \le x < 196$,
$196 \mapsto 196$,
$0 \le j < 168$
 for the first ten blocks,
$0 \le j < 42$
 for the last block.
\ADFvfyParStart{(197, ((10, 168, ((168, 1), (28, 1), (1, 1))), (1, 42, ((168, 1), (28, 1), (1, 1)))), ((21, 8), (29, 1)))} 

\adfDgap
\noindent{\boldmath $ 21^{12} 17^{1} $}~
With the point set $\{0, 1, \dots, 268\}$ partitioned into
 residue classes modulo $12$ for $\{0, 1, \dots, 251\}$, and
 $\{252, 253, \dots, 268\}$,
 the design is generated from

\adfLgap {\adfBfont
$\{263, 44, 4, 183, 189\}$,
$\{245, 24, 242, 85, 124\}$,
$\{219, 212, 184, 17, 31\}$,\adfsplit
$\{214, 64, 102, 189, 219\}$,
$\{35, 31, 101, 234, 205\}$,
$\{0, 43, 94, 141, 264\}$,\adfsplit
$\{0, 11, 26, 177, 236\}$,
$\{0, 17, 54, 190, 262\}$,
$\{0, 8, 88, 146, 169\}$,\adfsplit
$\{0, 1, 19, 21, 124\}$,
$\{0, 13, 45, 89, 122\}$,
$\{0, 9, 55, 65, 258\}$,\adfsplit
$\{0, 22, 74, 115, 184\}$,
$\{0, 63, 126, 189, 268\}$

}
\adfLgap \noindent by the mapping:
$x \mapsto x +  j \adfmod{252}$ for $x < 252$,
$x \mapsto (x +  j \adfmod{12}) + 252$ for $252 \le x < 264$,
$x \mapsto (x +  j \adfmod{4}) + 264$ for $264 \le x < 268$,
$268 \mapsto 268$,
$0 \le j < 252$
 for the first 13 blocks,
$0 \le j < 63$
 for the last block.
\ADFvfyParStart{(269, ((13, 252, ((252, 1), (12, 1), (4, 1), (1, 1))), (1, 63, ((252, 1), (12, 1), (4, 1), (1, 1)))), ((21, 12), (17, 1)))} 

\adfDgap
\noindent{\boldmath $ 23^{8} 7^{1} $}~
With the point set $\{0, 1, \dots, 190\}$ partitioned into
 residue classes modulo $7$ for $\{0, 1, \dots, 160\}$,
 $\{161, 162, \dots, 183\}$, and
 $\{184, 185, \dots, 190\}$,
 the design is generated from

\adfLgap {\adfBfont
$\{26, 114, 126, 6, 185\}$,
$\{185, 85, 88, 165, 10\}$,
$\{56, 34, 19, 73, 74\}$,\adfsplit
$\{0, 2, 6, 11, 115\}$,
$\{0, 10, 33, 69, 95\}$,
$\{0, 19, 44, 116, 180\}$,\adfsplit
$\{0, 13, 93, 123, 166\}$,
$\{0, 29, 96, 130, 177\}$,
$\{0, 16, 87, 134, 163\}$,\adfsplit
$\{0, 8, 32, 111, 183\}$

}
\adfLgap \noindent by the mapping:
$x \mapsto x +  j \adfmod{161}$ for $x < 161$,
$x \mapsto (x +  j \adfmod{23}) + 161$ for $161 \le x < 184$,
$x \mapsto (x - 184 +  j \adfmod{7}) + 184$ for $x \ge 184$,
$0 \le j < 161$.
\ADFvfyParStart{(191, ((10, 161, ((161, 1), (23, 1), (7, 1)))), ((23, 7), (23, 1), (7, 1)))} 

\adfDgap
\noindent{\boldmath $ 24^{6} 20^{1} $}~
With the point set $\{0, 1, \dots, 163\}$ partitioned into
 residue classes modulo $6$ for $\{0, 1, \dots, 143\}$, and
 $\{144, 145, \dots, 163\}$,
 the design is generated from

\adfLgap {\adfBfont
$\{26, 82, 11, 156, 78\}$,
$\{112, 163, 105, 7, 50\}$,
$\{0, 1, 3, 38, 118\}$,\adfsplit
$\{0, 9, 25, 112, 122\}$,
$\{0, 5, 13, 63, 74\}$,
$\{0, 21, 49, 100, 148\}$,\adfsplit
$\{0, 14, 33, 124, 151\}$,
$\{0, 17, 40, 85, 157\}$

}
\adfLgap \noindent by the mapping:
$x \mapsto x +  j \adfmod{144}$ for $x < 144$,
$x \mapsto (x +  j \adfmod{16}) + 144$ for $144 \le x < 160$,
$x \mapsto (x +  j \adfmod{4}) + 160$ for $x \ge 160$,
$0 \le j < 144$.
\ADFvfyParStart{(164, ((8, 144, ((144, 1), (16, 1), (4, 1)))), ((24, 6), (20, 1)))} 

\adfDgap
\noindent{\boldmath $ 24^{7} 8^{1} $}~
With the point set $\{0, 1, \dots, 175\}$ partitioned into
 residue classes modulo $7$ for $\{0, 1, \dots, 167\}$, and
 $\{168, 169, \dots, 175\}$,
 the design is generated from

\adfLgap {\adfBfont
$\{135, 1, 159, 70, 81\}$,
$\{13, 63, 15, 54, 32\}$,
$\{159, 28, 29, 3, 114\}$,\adfsplit
$\{107, 162, 91, 87, 55\}$,
$\{127, 17, 12, 173, 104\}$,
$\{115, 161, 55, 88, 155\}$,\adfsplit
$\{90, 16, 24, 120, 133\}$,
$\{0, 3, 18, 47, 170\}$

}
\adfLgap \noindent by the mapping:
$x \mapsto x +  j \adfmod{168}$ for $x < 168$,
$x \mapsto (x +  j \adfmod{8}) + 168$ for $x \ge 168$,
$0 \le j < 168$.
\ADFvfyParStart{(176, ((8, 168, ((168, 1), (8, 1)))), ((24, 7), (8, 1)))} 

\adfDgap
\noindent{\boldmath $ 24^{7} 28^{1} $}~
With the point set $\{0, 1, \dots, 195\}$ partitioned into
 residue classes modulo $7$ for $\{0, 1, \dots, 167\}$, and
 $\{168, 169, \dots, 195\}$,
 the design is generated from

\adfLgap {\adfBfont
$\{25, 23, 54, 80, 140\}$,
$\{114, 132, 181, 136, 18\}$,
$\{48, 133, 93, 136, 127\}$,\adfsplit
$\{0, 1, 62, 67, 75\}$,
$\{0, 15, 39, 59, 171\}$,
$\{0, 10, 58, 131, 192\}$,\adfsplit
$\{0, 25, 90, 122, 180\}$,
$\{0, 12, 76, 99, 187\}$,
$\{0, 11, 38, 138, 176\}$,\adfsplit
$\{0, 16, 132, 151, 169\}$

}
\adfLgap \noindent by the mapping:
$x \mapsto x +  j \adfmod{168}$ for $x < 168$,
$x \mapsto (x +  j \adfmod{28}) + 168$ for $x \ge 168$,
$0 \le j < 168$.
\ADFvfyParStart{(196, ((10, 168, ((168, 1), (28, 1)))), ((24, 7), (28, 1)))} 

\adfDgap
\noindent{\boldmath $ 24^{8} 16^{1} $}~
With the point set $\{0, 1, \dots, 207\}$ partitioned into
 residue classes modulo $8$ for $\{0, 1, \dots, 191\}$, and
 $\{192, 193, \dots, 207\}$,
 the design is generated from

\adfLgap {\adfBfont
$\{148, 196, 26, 35, 23\}$,
$\{17, 163, 10, 108, 167\}$,
$\{32, 73, 53, 54, 163\}$,\adfsplit
$\{0, 2, 60, 65, 75\}$,
$\{0, 13, 43, 121, 158\}$,
$\{0, 28, 85, 154, 195\}$,\adfsplit
$\{0, 23, 52, 139, 207\}$,
$\{0, 25, 51, 143, 197\}$,
$\{0, 6, 17, 103, 165\}$,\adfsplit
$\{0, 14, 45, 138, 156\}$

}
\adfLgap \noindent by the mapping:
$x \mapsto x +  j \adfmod{192}$ for $x < 192$,
$x \mapsto (x +  j \adfmod{16}) + 192$ for $x \ge 192$,
$0 \le j < 192$.
\ADFvfyParStart{(208, ((10, 192, ((192, 1), (16, 1)))), ((24, 8), (16, 1)))} 

\adfDgap
\noindent{\boldmath $ 24^{8} 36^{1} $}~
With the point set $\{0, 1, \dots, 227\}$ partitioned into
 residue classes modulo $8$ for $\{0, 1, \dots, 191\}$, and
 $\{192, 193, \dots, 227\}$,
 the design is generated from

\adfLgap {\adfBfont
$\{132, 183, 69, 206, 14\}$,
$\{52, 205, 106, 112, 102\}$,
$\{82, 64, 131, 166, 93\}$,\adfsplit
$\{3, 169, 88, 30, 206\}$,
$\{187, 143, 37, 84, 184\}$,
$\{0, 5, 62, 71, 224\}$,\adfsplit
$\{0, 2, 95, 164, 222\}$,
$\{0, 1, 14, 172, 212\}$,
$\{0, 25, 77, 156, 206\}$,\adfsplit
$\{0, 15, 46, 83, 116\}$,
$\{0, 17, 127, 170, 204\}$,
$\{0, 7, 105, 180, 216\}$

}
\adfLgap \noindent by the mapping:
$x \mapsto x +  j \adfmod{192}$ for $x < 192$,
$x \mapsto (x +  j \adfmod{32}) + 192$ for $192 \le x < 224$,
$x \mapsto (x +  j \adfmod{4}) + 224$ for $x \ge 224$,
$0 \le j < 192$.
\ADFvfyParStart{(228, ((12, 192, ((192, 1), (32, 1), (4, 1)))), ((24, 8), (36, 1)))} 

\adfDgap
\noindent{\boldmath $ 24^{9} 4^{1} $}~
With the point set $\{0, 1, \dots, 219\}$ partitioned into
 residue classes modulo $9$ for $\{0, 1, \dots, 215\}$, and
 $\{216, 217, 218, 219\}$,
 the design is generated from

\adfLgap {\adfBfont
$\{216, 87, 44, 46, 61\}$,
$\{186, 30, 167, 179, 99\}$,
$\{96, 19, 83, 144, 138\}$,\adfsplit
$\{0, 1, 4, 146, 186\}$,
$\{0, 8, 86, 110, 124\}$,
$\{0, 10, 56, 115, 150\}$,\adfsplit
$\{0, 21, 50, 103, 154\}$,
$\{0, 11, 33, 58, 131\}$,
$\{0, 5, 28, 44, 93\}$,\adfsplit
$\{0, 20, 57, 109, 141\}$

}
\adfLgap \noindent by the mapping:
$x \mapsto x +  j \adfmod{216}$ for $x < 216$,
$x \mapsto (x +  j \adfmod{4}) + 216$ for $x \ge 216$,
$0 \le j < 216$.
\ADFvfyParStart{(220, ((10, 216, ((216, 1), (4, 1)))), ((24, 9), (4, 1)))} 

\adfDgap
\noindent{\boldmath $ 24^{9} 44^{1} $}~
With the point set $\{0, 1, \dots, 259\}$ partitioned into
 residue classes modulo $9$ for $\{0, 1, \dots, 215\}$, and
 $\{216, 217, \dots, 259\}$,
 the design is generated from

\adfLgap {\adfBfont
$\{71, 158, 242, 129, 109\}$,
$\{171, 218, 194, 28, 62\}$,
$\{231, 54, 47, 148, 158\}$,\adfsplit
$\{18, 58, 192, 124, 136\}$,
$\{136, 33, 184, 57, 182\}$,
$\{91, 168, 221, 97, 152\}$,\adfsplit
$\{0, 3, 11, 131, 157\}$,
$\{0, 1, 15, 52, 253\}$,
$\{0, 19, 41, 188, 259\}$,\adfsplit
$\{0, 30, 83, 116, 218\}$,
$\{0, 5, 44, 119, 229\}$,
$\{0, 25, 60, 184, 235\}$,\adfsplit
$\{0, 21, 64, 95, 248\}$,
$\{0, 4, 80, 203, 246\}$

}
\adfLgap \noindent by the mapping:
$x \mapsto x +  j \adfmod{216}$ for $x < 216$,
$x \mapsto (x +  j \adfmod{24}) + 216$ for $216 \le x < 240$,
$x \mapsto (x +  j \adfmod{12}) + 240$ for $240 \le x < 252$,
$x \mapsto (x - 252 +  j \adfmod{8}) + 252$ for $x \ge 252$,
$0 \le j < 216$.
\ADFvfyParStart{(260, ((14, 216, ((216, 1), (24, 1), (12, 1), (8, 1)))), ((24, 9), (44, 1)))} 

\adfDgap
\noindent{\boldmath $ 24^{10} 4^{1} $}~
With the point set $\{0, 1, \dots, 243\}$ partitioned into
 residue classes modulo $10$ for $\{0, 1, \dots, 239\}$, and
 $\{240, 241, 242, 243\}$,
 the design is generated from

\adfLgap {\adfBfont
$\{147, 152, 199, 120, 58\}$,
$\{162, 215, 231, 143, 158\}$,
$\{94, 223, 230, 7, 81\}$,\adfsplit
$\{70, 64, 179, 241, 61\}$,
$\{0, 1, 42, 174, 185\}$,
$\{0, 14, 39, 117, 203\}$,\adfsplit
$\{0, 21, 59, 82, 127\}$,
$\{0, 8, 36, 54, 155\}$,
$\{0, 2, 31, 126, 207\}$,\adfsplit
$\{0, 26, 75, 138, 182\}$,
$\{0, 12, 34, 77, 169\}$,
$\{0, 48, 96, 144, 192\}$

}
\adfLgap \noindent by the mapping:
$x \mapsto x +  j \adfmod{240}$ for $x < 240$,
$x \mapsto (x +  j \adfmod{4}) + 240$ for $x \ge 240$,
$0 \le j < 240$
 for the first 11 blocks,
$0 \le j < 48$
 for the last block.
\ADFvfyParStart{(244, ((11, 240, ((240, 1), (4, 1))), (1, 48, ((240, 1), (4, 1)))), ((24, 10), (4, 1)))} 

\adfDgap
\noindent{\boldmath $ 24^{10} 12^{1} $}~
With the point set $\{0, 1, \dots, 251\}$ partitioned into
 residue classes modulo $10$ for $\{0, 1, \dots, 239\}$, and
 $\{240, 241, \dots, 251\}$,
 the design is generated from

\adfLgap {\adfBfont
$\{200, 73, 167, 56, 141\}$,
$\{182, 127, 64, 145, 229\}$,
$\{216, 227, 174, 175, 129\}$,\adfsplit
$\{126, 180, 246, 55, 209\}$,
$\{233, 208, 229, 201, 210\}$,
$\{0, 5, 44, 108, 121\}$,\adfsplit
$\{0, 12, 43, 79, 178\}$,
$\{0, 3, 61, 95, 109\}$,
$\{0, 22, 91, 184, 242\}$,\adfsplit
$\{0, 27, 65, 191, 248\}$,
$\{0, 8, 24, 97, 112\}$,
$\{0, 6, 57, 139, 174\}$

}
\adfLgap \noindent by the mapping:
$x \mapsto x +  j \adfmod{240}$ for $x < 240$,
$x \mapsto (x +  j \adfmod{12}) + 240$ for $x \ge 240$,
$0 \le j < 240$.
\ADFvfyParStart{(252, ((12, 240, ((240, 1), (12, 1)))), ((24, 10), (12, 1)))} 

\adfDgap
\noindent{\boldmath $ 24^{10} 32^{1} $}~
With the point set $\{0, 1, \dots, 271\}$ partitioned into
 residue classes modulo $10$ for $\{0, 1, \dots, 239\}$, and
 $\{240, 241, \dots, 271\}$,
 the design is generated from

\adfLgap {\adfBfont
$\{50, 26, 152, 17, 24\}$,
$\{249, 210, 183, 195, 161\}$,
$\{43, 252, 64, 82, 218\}$,\adfsplit
$\{252, 57, 100, 89, 223\}$,
$\{188, 106, 191, 90, 125\}$,
$\{26, 224, 77, 264, 238\}$,\adfsplit
$\{0, 6, 75, 194, 271\}$,
$\{0, 5, 59, 88, 146\}$,
$\{0, 38, 127, 183, 254\}$,\adfsplit
$\{0, 25, 62, 143, 252\}$,
$\{0, 8, 53, 84, 249\}$,
$\{0, 23, 64, 131, 167\}$,\adfsplit
$\{0, 1, 48, 92, 163\}$,
$\{0, 4, 17, 72, 133\}$

}
\adfLgap \noindent by the mapping:
$x \mapsto x +  j \adfmod{240}$ for $x < 240$,
$x \mapsto (x +  j \adfmod{24}) + 240$ for $240 \le x < 264$,
$x \mapsto (x +  j \adfmod{8}) + 264$ for $x \ge 264$,
$0 \le j < 240$.
\ADFvfyParStart{(272, ((14, 240, ((240, 1), (24, 1), (8, 1)))), ((24, 10), (32, 1)))} 

\adfDgap
\noindent{\boldmath $ 24^{11} 20^{1} $}~
With the point set $\{0, 1, \dots, 283\}$ partitioned into
 residue classes modulo $11$ for $\{0, 1, \dots, 263\}$, and
 $\{264, 265, \dots, 283\}$,
 the design is generated from

\adfLgap {\adfBfont
$\{268, 176, 101, 175, 85\}$,
$\{158, 9, 282, 250, 167\}$,
$\{188, 121, 95, 85, 229\}$,\adfsplit
$\{13, 1, 30, 213, 263\}$,
$\{23, 189, 140, 227, 224\}$,
$\{5, 78, 136, 83, 262\}$,\adfsplit
$\{0, 4, 19, 221, 282\}$,
$\{0, 20, 48, 109, 160\}$,
$\{0, 24, 56, 96, 218\}$,\adfsplit
$\{0, 8, 105, 135, 274\}$,
$\{0, 13, 114, 182, 270\}$,
$\{0, 6, 27, 45, 145\}$,\adfsplit
$\{0, 34, 71, 157, 222\}$,
$\{0, 25, 79, 136, 195\}$

}
\adfLgap \noindent by the mapping:
$x \mapsto x +  j \adfmod{264}$ for $x < 264$,
$x \mapsto (x +  j \adfmod{12}) + 264$ for $264 \le x < 276$,
$x \mapsto (x - 276 +  j \adfmod{8}) + 276$ for $x \ge 276$,
$0 \le j < 264$.
\ADFvfyParStart{(284, ((14, 264, ((264, 1), (12, 1), (8, 1)))), ((24, 11), (20, 1)))} 

\adfDgap
\noindent{\boldmath $ 25^{8} 5^{1} $}~
With the point set $\{0, 1, \dots, 204\}$ partitioned into
 residue classes modulo $8$ for $\{0, 1, \dots, 199\}$, and
 $\{200, 201, 202, 203, 204\}$,
 the design is generated from

\adfLgap {\adfBfont
$\{112, 76, 6, 185, 50\}$,
$\{4, 15, 42, 97, 46\}$,
$\{0, 1, 3, 194, 200\}$,\adfsplit
$\{0, 13, 43, 66, 103\}$,
$\{0, 14, 33, 68, 85\}$,
$\{0, 20, 61, 83, 142\}$,\adfsplit
$\{0, 10, 25, 111, 123\}$,
$\{0, 18, 46, 75, 151\}$,
$\{0, 5, 39, 84, 131\}$,\adfsplit
$\{0, 50, 100, 150, 204\}$

}
\adfLgap \noindent by the mapping:
$x \mapsto x +  j \adfmod{200}$ for $x < 200$,
$x \mapsto (x +  j \adfmod{4}) + 200$ for $200 \le x < 204$,
$204 \mapsto 204$,
$0 \le j < 200$
 for the first nine blocks,
$0 \le j < 50$
 for the last block.
\ADFvfyParStart{(205, ((9, 200, ((200, 1), (4, 1), (1, 1))), (1, 50, ((200, 1), (4, 1), (1, 1)))), ((25, 8), (5, 1)))} 

\adfDgap
\noindent{\boldmath $ 25^{8} 45^{1} $}~
With the point set $\{0, 1, \dots, 244\}$ partitioned into
 residue classes modulo $8$ for $\{0, 1, \dots, 199\}$, and
 $\{200, 201, \dots, 244\}$,
 the design is generated from

\adfLgap {\adfBfont
$\{30, 202, 57, 120, 188\}$,
$\{43, 78, 232, 37, 199\}$,
$\{189, 212, 28, 81, 174\}$,\adfsplit
$\{104, 79, 180, 161, 74\}$,
$\{21, 31, 207, 19, 86\}$,
$\{0, 9, 22, 83, 230\}$,\adfsplit
$\{0, 17, 51, 98, 240\}$,
$\{0, 4, 33, 142, 200\}$,
$\{0, 20, 97, 125, 237\}$,\adfsplit
$\{0, 1, 37, 60, 86\}$,
$\{0, 7, 18, 129, 222\}$,
$\{0, 3, 46, 130, 209\}$,\adfsplit
$\{0, 14, 148, 169, 219\}$,
$\{0, 50, 100, 150, 244\}$

}
\adfLgap \noindent by the mapping:
$x \mapsto x +  j \adfmod{200}$ for $x < 200$,
$x \mapsto (x +  j \adfmod{40}) + 200$ for $200 \le x < 240$,
$x \mapsto (x +  j \adfmod{4}) + 240$ for $240 \le x < 244$,
$244 \mapsto 244$,
$0 \le j < 200$
 for the first 13 blocks,
$0 \le j < 50$
 for the last block.
\ADFvfyParStart{(245, ((13, 200, ((200, 1), (40, 1), (4, 1), (1, 1))), (1, 50, ((200, 1), (40, 1), (4, 1), (1, 1)))), ((25, 8), (45, 1)))} 

\adfDgap
\noindent{\boldmath $ 28^{6} 20^{1} $}~
With the point set $\{0, 1, \dots, 187\}$ partitioned into
 residue classes modulo $6$ for $\{0, 1, \dots, 167\}$, and
 $\{168, 169, \dots, 187\}$,
 the design is generated from

\adfLgap {\adfBfont
$\{64, 77, 36, 146, 67\}$,
$\{165, 100, 56, 17, 178\}$,
$\{0, 1, 33, 35, 52\}$,\adfsplit
$\{0, 9, 23, 49, 130\}$,
$\{0, 8, 37, 53, 64\}$,
$\{0, 5, 62, 105, 168\}$,\adfsplit
$\{0, 7, 80, 101, 172\}$,
$\{0, 15, 70, 92, 180\}$,
$\{0, 4, 97, 122, 187\}$

}
\adfLgap \noindent by the mapping:
$x \mapsto x +  j \adfmod{168}$ for $x < 168$,
$x \mapsto (x +  j \adfmod{12}) + 168$ for $168 \le x < 180$,
$x \mapsto (x - 180 +  j \adfmod{8}) + 180$ for $x \ge 180$,
$0 \le j < 168$.
\ADFvfyParStart{(188, ((9, 168, ((168, 1), (12, 1), (8, 1)))), ((28, 6), (20, 1)))} 

\adfDgap
\noindent{\boldmath $ 28^{6} 40^{1} $}~
With the point set $\{0, 1, \dots, 207\}$ partitioned into
 residue classes modulo $6$ for $\{0, 1, \dots, 167\}$, and
 $\{168, 169, \dots, 207\}$,
 the design is generated from

\adfLgap {\adfBfont
$\{207, 30, 39, 17, 1\}$,
$\{24, 91, 63, 110, 58\}$,
$\{193, 164, 22, 57, 133\}$,\adfsplit
$\{132, 58, 61, 188, 81\}$,
$\{0, 7, 75, 131, 198\}$,
$\{0, 1, 64, 81, 197\}$,\adfsplit
$\{0, 2, 10, 55, 175\}$,
$\{0, 14, 79, 141, 180\}$,
$\{0, 15, 40, 125, 174\}$,\adfsplit
$\{0, 4, 50, 99, 182\}$,
$\{0, 11, 32, 109, 181\}$

}
\adfLgap \noindent by the mapping:
$x \mapsto x +  j \adfmod{168}$ for $x < 168$,
$x \mapsto (x +  j \adfmod{28}) + 168$ for $168 \le x < 196$,
$x \mapsto (x - 196 +  j \adfmod{12}) + 196$ for $x \ge 196$,
$0 \le j < 168$.
\ADFvfyParStart{(208, ((11, 168, ((168, 1), (28, 1), (12, 1)))), ((28, 6), (40, 1)))} 

\adfDgap
\noindent{\boldmath $ 28^{7} 16^{1} $}~
With the point set $\{0, 1, \dots, 211\}$ partitioned into
 residue classes modulo $7$ for $\{0, 1, \dots, 195\}$, and
 $\{196, 197, \dots, 211\}$,
 the design is generated from

\adfLgap {\adfBfont
$\{206, 16, 82, 189, 183\}$,
$\{201, 68, 130, 27, 177\}$,
$\{204, 27, 133, 12, 94\}$,\adfsplit
$\{0, 1, 3, 186, 199\}$,
$\{0, 5, 27, 79, 165\}$,
$\{0, 4, 37, 54, 162\}$,\adfsplit
$\{0, 8, 20, 65, 143\}$,
$\{0, 9, 68, 92, 111\}$,
$\{0, 18, 44, 69, 99\}$,\adfsplit
$\{0, 16, 64, 96, 136\}$

}
\adfLgap \noindent by the mapping:
$x \mapsto x +  j \adfmod{196}$ for $x < 196$,
$x \mapsto (x - 196 + 4 j \adfmod{16}) + 196$ for $x \ge 196$,
$0 \le j < 196$.
\ADFvfyParStart{(212, ((10, 196, ((196, 1), (16, 4)))), ((28, 7), (16, 1)))} 

\adfDgap
\noindent{\boldmath $ 28^{7} 36^{1} $}~
With the point set $\{0, 1, \dots, 231\}$ partitioned into
 residue classes modulo $7$ for $\{0, 1, \dots, 195\}$, and
 $\{196, 197, \dots, 231\}$,
 the design is generated from

\adfLgap {\adfBfont
$\{216, 39, 5, 72, 108\}$,
$\{96, 16, 160, 17, 116\}$,
$\{113, 167, 63, 94, 197\}$,\adfsplit
$\{215, 176, 42, 27, 164\}$,
$\{0, 2, 5, 27, 224\}$,
$\{0, 11, 29, 150, 225\}$,\adfsplit
$\{0, 16, 87, 110, 197\}$,
$\{0, 17, 43, 83, 131\}$,
$\{0, 4, 13, 89, 128\}$,\adfsplit
$\{0, 10, 55, 145, 213\}$,
$\{0, 6, 30, 38, 212\}$,
$\{0, 37, 95, 155, 204\}$

}
\adfLgap \noindent by the mapping:
$x \mapsto x +  j \adfmod{196}$ for $x < 196$,
$x \mapsto (x +  j \adfmod{28}) + 196$ for $196 \le x < 224$,
$x \mapsto (x + 2 j \adfmod{8}) + 224$ for $x \ge 224$,
$0 \le j < 196$.
\ADFvfyParStart{(232, ((12, 196, ((196, 1), (28, 1), (8, 2)))), ((28, 7), (36, 1)))} 

\adfDgap
\noindent{\boldmath $ 28^{8} 12^{1} $}~
With the point set $\{0, 1, \dots, 235\}$ partitioned into
 residue classes modulo $8$ for $\{0, 1, \dots, 223\}$, and
 $\{224, 225, \dots, 235\}$,
 the design is generated from

\adfLgap {\adfBfont
$\{174, 16, 169, 138, 181\}$,
$\{10, 129, 189, 231, 99\}$,
$\{91, 92, 6, 113, 45\}$,\adfsplit
$\{100, 193, 152, 114, 142\}$,
$\{0, 2, 27, 189, 232\}$,
$\{0, 6, 19, 116, 149\}$,\adfsplit
$\{0, 15, 76, 99, 133\}$,
$\{0, 3, 73, 77, 126\}$,
$\{0, 17, 109, 159, 224\}$,\adfsplit
$\{0, 9, 63, 146, 166\}$,
$\{0, 11, 29, 55, 124\}$

}
\adfLgap \noindent by the mapping:
$x \mapsto x +  j \adfmod{224}$ for $x < 224$,
$x \mapsto (x +  j \adfmod{8}) + 224$ for $224 \le x < 232$,
$x \mapsto (x +  j \adfmod{4}) + 232$ for $x \ge 232$,
$0 \le j < 224$.
\ADFvfyParStart{(236, ((11, 224, ((224, 1), (8, 1), (4, 1)))), ((28, 8), (12, 1)))} 

\adfDgap
\noindent{\boldmath $ 28^{8} 32^{1} $}~
With the point set $\{0, 1, \dots, 255\}$ partitioned into
 residue classes modulo $8$ for $\{0, 1, \dots, 223\}$, and
 $\{224, 225, \dots, 255\}$,
 the design is generated from

\adfLgap {\adfBfont
$\{254, 6, 196, 24, 199\}$,
$\{230, 180, 197, 30, 58\}$,
$\{222, 209, 168, 167, 130\}$,\adfsplit
$\{156, 237, 206, 176, 75\}$,
$\{104, 214, 15, 203, 228\}$,
$\{0, 27, 60, 130, 165\}$,\adfsplit
$\{0, 5, 12, 83, 129\}$,
$\{0, 10, 161, 205, 244\}$,
$\{0, 21, 82, 108, 147\}$,\adfsplit
$\{0, 14, 67, 90, 254\}$,
$\{0, 6, 15, 133, 155\}$,
$\{0, 2, 45, 158, 237\}$,\adfsplit
$\{0, 4, 51, 166, 233\}$

}
\adfLgap \noindent by the mapping:
$x \mapsto x +  j \adfmod{224}$ for $x < 224$,
$x \mapsto (x +  j \adfmod{32}) + 224$ for $x \ge 224$,
$0 \le j < 224$.
\ADFvfyParStart{(256, ((13, 224, ((224, 1), (32, 1)))), ((28, 8), (32, 1)))} 

\adfDgap
\noindent{\boldmath $ 28^{8} 52^{1} $}~
With the point set $\{0, 1, \dots, 275\}$ partitioned into
 residue classes modulo $8$ for $\{0, 1, \dots, 223\}$, and
 $\{224, 225, \dots, 275\}$,
 the design is generated from

\adfLgap {\adfBfont
$\{238, 47, 96, 11, 9\}$,
$\{152, 99, 7, 89, 230\}$,
$\{35, 213, 44, 267, 49\}$,\adfsplit
$\{153, 227, 131, 48, 77\}$,
$\{111, 44, 115, 41, 141\}$,
$\{11, 22, 133, 243, 145\}$,\adfsplit
$\{40, 228, 131, 165, 166\}$,
$\{0, 7, 27, 68, 240\}$,
$\{0, 31, 108, 177, 248\}$,\adfsplit
$\{0, 17, 42, 131, 271\}$,
$\{0, 13, 86, 143, 272\}$,
$\{0, 15, 43, 66, 118\}$,\adfsplit
$\{0, 18, 39, 205, 258\}$,
$\{0, 6, 50, 165, 265\}$,
$\{0, 33, 95, 140, 253\}$

}
\adfLgap \noindent by the mapping:
$x \mapsto x +  j \adfmod{224}$ for $x < 224$,
$x \mapsto (x +  j \adfmod{28}) + 224$ for $224 \le x < 252$,
$x \mapsto (x - 252 +  j \adfmod{16}) + 252$ for $252 \le x < 268$,
$x \mapsto (x - 268 +  j \adfmod{8}) + 268$ for $x \ge 268$,
$0 \le j < 224$.
\ADFvfyParStart{(276, ((15, 224, ((224, 1), (28, 1), (16, 1), (8, 1)))), ((28, 8), (52, 1)))} 

\adfDgap
\noindent{\boldmath $ 28^{9} 8^{1} $}~
With the point set $\{0, 1, \dots, 259\}$ partitioned into
 residue classes modulo $9$ for $\{0, 1, \dots, 251\}$, and
 $\{252, 253, \dots, 259\}$,
 the design is generated from

\adfLgap {\adfBfont
$\{253, 165, 162, 131, 24\}$,
$\{258, 190, 153, 112, 155\}$,
$\{147, 29, 118, 103, 53\}$,\adfsplit
$\{165, 83, 213, 226, 97\}$,
$\{139, 244, 188, 39, 51\}$,
$\{0, 4, 25, 30, 83\}$,\adfsplit
$\{0, 1, 20, 132, 230\}$,
$\{0, 6, 46, 110, 201\}$,
$\{0, 8, 60, 92, 185\}$,\adfsplit
$\{0, 16, 33, 71, 166\}$,
$\{0, 11, 39, 124, 190\}$,
$\{0, 7, 76, 172, 182\}$

}
\adfLgap \noindent by the mapping:
$x \mapsto x +  j \adfmod{252}$ for $x < 252$,
$x \mapsto (x - 252 + 2 j \adfmod{8}) + 252$ for $x \ge 252$,
$0 \le j < 252$.
\ADFvfyParStart{(260, ((12, 252, ((252, 1), (8, 2)))), ((28, 9), (8, 1)))} 

\adfDgap
\noindent{\boldmath $ 28^{9} 48^{1} $}~
With the point set $\{0, 1, \dots, 299\}$ partitioned into
 residue classes modulo $9$ for $\{0, 1, \dots, 251\}$, and
 $\{252, 253, \dots, 299\}$,
 the design is generated from

\adfLgap {\adfBfont
$\{295, 196, 78, 81, 103\}$,
$\{290, 45, 84, 22, 151\}$,
$\{289, 5, 76, 110, 187\}$,\adfsplit
$\{225, 46, 76, 128, 177\}$,
$\{208, 272, 70, 15, 134\}$,
$\{11, 282, 165, 243, 226\}$,\adfsplit
$\{17, 10, 85, 2, 258\}$,
$\{54, 82, 240, 38, 140\}$,
$\{0, 10, 43, 130, 286\}$,\adfsplit
$\{0, 2, 214, 228, 285\}$,
$\{0, 13, 60, 89, 173\}$,
$\{0, 12, 31, 140, 265\}$,\adfsplit
$\{0, 1, 5, 211, 263\}$,
$\{0, 6, 116, 201, 260\}$,
$\{0, 11, 91, 156, 278\}$,\adfsplit
$\{0, 21, 53, 148, 217\}$

}
\adfLgap \noindent by the mapping:
$x \mapsto x +  j \adfmod{252}$ for $x < 252$,
$x \mapsto (x +  j \adfmod{36}) + 252$ for $252 \le x < 288$,
$x \mapsto (x +  j \adfmod{12}) + 288$ for $x \ge 288$,
$0 \le j < 252$.
\ADFvfyParStart{(300, ((16, 252, ((252, 1), (36, 1), (12, 1)))), ((28, 9), (48, 1)))} 

\adfDgap
\noindent{\boldmath $ 28^{10} 4^{1} $}~
With the point set $\{0, 1, \dots, 283\}$ partitioned into
 residue classes modulo $10$ for $\{0, 1, \dots, 279\}$, and
 $\{280, 281, 282, 283\}$,
 the design is generated from

\adfLgap {\adfBfont
$\{216, 62, 138, 251, 5\}$,
$\{30, 107, 144, 168, 23\}$,
$\{6, 181, 152, 269, 184\}$,\adfsplit
$\{158, 163, 31, 47, 202\}$,
$\{87, 75, 22, 266, 194\}$,
$\{0, 43, 106, 165, 280\}$,\adfsplit
$\{0, 11, 26, 188, 239\}$,
$\{0, 8, 74, 112, 205\}$,
$\{0, 1, 22, 28, 47\}$,\adfsplit
$\{0, 9, 23, 71, 207\}$,
$\{0, 13, 55, 194, 212\}$,
$\{0, 2, 33, 97, 226\}$,\adfsplit
$\{0, 4, 49, 128, 186\}$

}
\adfLgap \noindent by the mapping:
$x \mapsto x +  j \adfmod{280}$ for $x < 280$,
$x \mapsto (x +  j \adfmod{4}) + 280$ for $x \ge 280$,
$0 \le j < 280$.
\ADFvfyParStart{(284, ((13, 280, ((280, 1), (4, 1)))), ((28, 10), (4, 1)))} 

\adfDgap
\noindent{\boldmath $ 28^{10} 24^{1} $}~
With the point set $\{0, 1, \dots, 303\}$ partitioned into
 residue classes modulo $10$ for $\{0, 1, \dots, 279\}$, and
 $\{280, 281, \dots, 303\}$,
 the design is generated from

\adfLgap {\adfBfont
$\{97, 188, 273, 146, 145\}$,
$\{164, 160, 78, 66, 125\}$,
$\{203, 240, 18, 95, 34\}$,\adfsplit
$\{174, 243, 293, 81, 95\}$,
$\{184, 241, 246, 157, 208\}$,
$\{205, 26, 91, 183, 250\}$,\adfsplit
$\{222, 276, 298, 253, 170\}$,
$\{0, 15, 117, 158, 300\}$,
$\{0, 18, 64, 96, 183\}$,\adfsplit
$\{0, 6, 17, 72, 181\}$,
$\{0, 9, 28, 53, 177\}$,
$\{0, 2, 73, 246, 293\}$,\adfsplit
$\{0, 7, 75, 88, 151\}$,
$\{0, 3, 29, 142, 286\}$,
$\{0, 8, 134, 155, 289\}$

}
\adfLgap \noindent by the mapping:
$x \mapsto x +  j \adfmod{280}$ for $x < 280$,
$x \mapsto (x +  j \adfmod{20}) + 280$ for $280 \le x < 300$,
$x \mapsto (x +  j \adfmod{4}) + 300$ for $x \ge 300$,
$0 \le j < 280$.
\ADFvfyParStart{(304, ((15, 280, ((280, 1), (20, 1), (4, 1)))), ((28, 10), (24, 1)))} 

\adfDgap
\noindent{\boldmath $ 29^{8} 1^{1} $}~
With the point set $\{0, 1, \dots, 232\}$ partitioned into
 residue classes modulo $8$ for $\{0, 1, \dots, 231\}$, and
 $\{232\}$,
 the design is generated from

\adfLgap {\adfBfont
$\{8, 230, 117, 135, 146\}$,
$\{43, 17, 13, 168, 174\}$,
$\{207, 108, 169, 2, 0\}$,\adfsplit
$\{0, 3, 22, 39, 76\}$,
$\{0, 9, 21, 52, 66\}$,
$\{0, 7, 53, 122, 150\}$,\adfsplit
$\{0, 1, 42, 93, 183\}$,
$\{0, 5, 20, 79, 134\}$,
$\{0, 33, 67, 111, 197\}$,\adfsplit
$\{0, 13, 60, 83, 145\}$,
$\{0, 58, 116, 174, 232\}$

}
\adfLgap \noindent by the mapping:
$x \mapsto x +  j \adfmod{232}$ for $x < 232$,
$232 \mapsto 232$,
$0 \le j < 232$
 for the first ten blocks,
$0 \le j < 58$
 for the last block.
\ADFvfyParStart{(233, ((10, 232, ((232, 1), (1, 1))), (1, 58, ((232, 1), (1, 1)))), ((29, 8), (1, 1)))} 

\adfDgap
\noindent{\boldmath $ 29^{8} 21^{1} $}~
With the point set $\{0, 1, \dots, 252\}$ partitioned into
 residue classes modulo $8$ for $\{0, 1, \dots, 231\}$, and
 $\{232, 233, \dots, 252\}$,
 the design is generated from

\adfLgap {\adfBfont
$\{231, 3, 233, 82, 220\}$,
$\{238, 46, 74, 188, 123\}$,
$\{20, 14, 89, 66, 71\}$,\adfsplit
$\{212, 138, 141, 121, 182\}$,
$\{0, 1, 36, 43, 239\}$,
$\{0, 2, 31, 137, 248\}$,\adfsplit
$\{0, 9, 63, 133, 238\}$,
$\{0, 13, 34, 145, 164\}$,
$\{0, 12, 26, 85, 110\}$,\adfsplit
$\{0, 22, 60, 115, 165\}$,
$\{0, 33, 78, 140, 179\}$,
$\{0, 10, 37, 119, 166\}$,\adfsplit
$\{0, 58, 116, 174, 252\}$

}
\adfLgap \noindent by the mapping:
$x \mapsto x +  j \adfmod{232}$ for $x < 232$,
$x \mapsto (x - 232 + 2 j \adfmod{16}) + 232$ for $232 \le x < 248$,
$x \mapsto (x +  j \adfmod{4}) + 248$ for $248 \le x < 252$,
$252 \mapsto 252$,
$0 \le j < 232$
 for the first 12 blocks,
$0 \le j < 58$
 for the last block.
\ADFvfyParStart{(253, ((12, 232, ((232, 1), (16, 2), (4, 1), (1, 1))), (1, 58, ((232, 1), (16, 2), (4, 1), (1, 1)))), ((29, 8), (21, 1)))} 

\adfDgap
\noindent{\boldmath $ 32^{6} 20^{1} $}~
With the point set $\{0, 1, \dots, 211\}$ partitioned into
 residue classes modulo $6$ for $\{0, 1, \dots, 191\}$, and
 $\{192, 193, \dots, 211\}$,
 the design is generated from

\adfLgap {\adfBfont
$\{47, 32, 21, 127, 156\}$,
$\{128, 72, 187, 160, 3\}$,
$\{40, 156, 119, 201, 85\}$,\adfsplit
$\{0, 1, 14, 119, 208\}$,
$\{0, 17, 50, 81, 139\}$,
$\{0, 7, 16, 62, 101\}$,\adfsplit
$\{0, 10, 92, 141, 195\}$,
$\{0, 28, 75, 127, 202\}$,
$\{0, 2, 5, 43, 172\}$,\adfsplit
$\{0, 4, 23, 44, 204\}$

}
\adfLgap \noindent by the mapping:
$x \mapsto x +  j \adfmod{192}$ for $x < 192$,
$x \mapsto (x +  j \adfmod{16}) + 192$ for $192 \le x < 208$,
$x \mapsto (x +  j \adfmod{4}) + 208$ for $x \ge 208$,
$0 \le j < 192$.
\ADFvfyParStart{(212, ((10, 192, ((192, 1), (16, 1), (4, 1)))), ((32, 6), (20, 1)))} 

\adfDgap
\noindent{\boldmath $ 32^{6} 40^{1} $}~
With the point set $\{0, 1, \dots, 231\}$ partitioned into
 residue classes modulo $6$ for $\{0, 1, \dots, 191\}$, and
 $\{192, 193, \dots, 231\}$,
 the design is generated from

\adfLgap {\adfBfont
$\{158, 106, 42, 121, 15\}$,
$\{115, 222, 106, 107, 141\}$,
$\{226, 137, 116, 43, 63\}$,\adfsplit
$\{133, 64, 229, 129, 179\}$,
$\{150, 164, 198, 1, 148\}$,
$\{0, 19, 75, 107, 130\}$,\adfsplit
$\{0, 5, 87, 134, 219\}$,
$\{0, 13, 38, 135, 206\}$,
$\{0, 11, 33, 100, 202\}$,\adfsplit
$\{0, 7, 17, 68, 222\}$,
$\{0, 3, 44, 83, 207\}$,
$\{0, 28, 121, 161, 217\}$

}
\adfLgap \noindent by the mapping:
$x \mapsto x +  j \adfmod{192}$ for $x < 192$,
$x \mapsto (x +  j \adfmod{32}) + 192$ for $192 \le x < 224$,
$x \mapsto (x +  j \adfmod{8}) + 224$ for $x \ge 224$,
$0 \le j < 192$.
\ADFvfyParStart{(232, ((12, 192, ((192, 1), (32, 1), (8, 1)))), ((32, 6), (40, 1)))} 

\adfDgap
\noindent{\boldmath $ 32^{7} 4^{1} $}~
With the point set $\{0, 1, \dots, 227\}$ partitioned into
 residue classes modulo $7$ for $\{0, 1, \dots, 223\}$, and
 $\{224, 225, 226, 227\}$,
 the design is generated from

\adfLgap {\adfBfont
$\{151, 127, 189, 5, 142\}$,
$\{198, 40, 209, 161, 1\}$,
$\{6, 101, 142, 113, 19\}$,\adfsplit
$\{0, 1, 6, 199, 224\}$,
$\{0, 22, 45, 89, 188\}$,
$\{0, 4, 34, 120, 152\}$,\adfsplit
$\{0, 2, 59, 92, 111\}$,
$\{0, 8, 18, 83, 163\}$,
$\{0, 3, 54, 74, 127\}$,\adfsplit
$\{0, 17, 60, 110, 156\}$

}
\adfLgap \noindent by the mapping:
$x \mapsto x +  j \adfmod{224}$ for $x < 224$,
$x \mapsto (x +  j \adfmod{4}) + 224$ for $x \ge 224$,
$0 \le j < 224$.
\ADFvfyParStart{(228, ((10, 224, ((224, 1), (4, 1)))), ((32, 7), (4, 1)))} 

\adfDgap
\noindent{\boldmath $ 32^{7} 24^{1} $}~
With the point set $\{0, 1, \dots, 247\}$ partitioned into
 residue classes modulo $7$ for $\{0, 1, \dots, 223\}$, and
 $\{224, 225, \dots, 247\}$,
 the design is generated from

\adfLgap {\adfBfont
$\{39, 203, 6, 242, 5\}$,
$\{225, 166, 223, 116, 184\}$,
$\{246, 216, 173, 108, 78\}$,\adfsplit
$\{42, 219, 157, 67, 134\}$,
$\{57, 102, 44, 38, 54\}$,
$\{0, 9, 20, 103, 144\}$,\adfsplit
$\{0, 8, 40, 171, 186\}$,
$\{0, 2, 54, 71, 150\}$,
$\{0, 4, 101, 106, 234\}$,\adfsplit
$\{0, 22, 73, 104, 231\}$,
$\{0, 29, 66, 110, 165\}$,
$\{0, 12, 36, 149, 232\}$

}
\adfLgap \noindent by the mapping:
$x \mapsto x +  j \adfmod{224}$ for $x < 224$,
$x \mapsto (x +  j \adfmod{16}) + 224$ for $224 \le x < 240$,
$x \mapsto (x +  j \adfmod{8}) + 240$ for $x \ge 240$,
$0 \le j < 224$.
\ADFvfyParStart{(248, ((12, 224, ((224, 1), (16, 1), (8, 1)))), ((32, 7), (24, 1)))} 

\adfDgap
\noindent{\boldmath $ 32^{7} 44^{1} $}~
With the point set $\{0, 1, \dots, 267\}$ partitioned into
 residue classes modulo $7$ for $\{0, 1, \dots, 223\}$, and
 $\{224, 225, \dots, 267\}$,
 the design is generated from

\adfLgap {\adfBfont
$\{140, 71, 230, 121, 136\}$,
$\{147, 117, 94, 179, 155\}$,
$\{187, 196, 262, 104, 198\}$,\adfsplit
$\{220, 98, 266, 97, 123\}$,
$\{66, 14, 259, 132, 183\}$,
$\{41, 145, 186, 36, 233\}$,\adfsplit
$\{0, 3, 13, 99, 180\}$,
$\{0, 17, 46, 110, 170\}$,
$\{0, 12, 87, 146, 244\}$,\adfsplit
$\{0, 16, 88, 204, 243\}$,
$\{0, 34, 73, 176, 249\}$,
$\{0, 6, 37, 95, 241\}$,\adfsplit
$\{0, 33, 76, 144, 246\}$,
$\{0, 18, 45, 202, 238\}$

}
\adfLgap \noindent by the mapping:
$x \mapsto x +  j \adfmod{224}$ for $x < 224$,
$x \mapsto (x +  j \adfmod{32}) + 224$ for $224 \le x < 256$,
$x \mapsto (x +  j \adfmod{8}) + 256$ for $256 \le x < 264$,
$x \mapsto (x +  j \adfmod{4}) + 264$ for $x \ge 264$,
$0 \le j < 224$.
\ADFvfyParStart{(268, ((14, 224, ((224, 1), (32, 1), (8, 1), (4, 1)))), ((32, 7), (44, 1)))} 

\adfDgap
\noindent{\boldmath $ 32^{8} 8^{1} $}~
With the point set $\{0, 1, \dots, 263\}$ partitioned into
 residue classes modulo $8$ for $\{0, 1, \dots, 255\}$, and
 $\{256, 257, \dots, 263\}$,
 the design is generated from

\adfLgap {\adfBfont
$\{16, 150, 20, 119, 97\}$,
$\{250, 41, 188, 5, 231\}$,
$\{28, 80, 86, 77, 177\}$,\adfsplit
$\{93, 55, 161, 122, 256\}$,
$\{154, 262, 140, 161, 166\}$,
$\{0, 1, 35, 124, 178\}$,\adfsplit
$\{0, 17, 63, 148, 181\}$,
$\{0, 23, 94, 121, 163\}$,
$\{0, 2, 20, 57, 174\}$,\adfsplit
$\{0, 15, 60, 110, 197\}$,
$\{0, 10, 61, 86, 151\}$,
$\{0, 13, 41, 142, 186\}$

}
\adfLgap \noindent by the mapping:
$x \mapsto x +  j \adfmod{256}$ for $x < 256$,
$x \mapsto (x +  j \adfmod{8}) + 256$ for $x \ge 256$,
$0 \le j < 256$.
\ADFvfyParStart{(264, ((12, 256, ((256, 1), (8, 1)))), ((32, 8), (8, 1)))} 

\adfDgap
\noindent{\boldmath $ 32^{8} 28^{1} $}~
With the point set $\{0, 1, \dots, 283\}$ partitioned into
 residue classes modulo $8$ for $\{0, 1, \dots, 255\}$, and
 $\{256, 257, \dots, 283\}$,
 the design is generated from

\adfLgap {\adfBfont
$\{218, 6, 145, 120, 244\}$,
$\{162, 275, 49, 119, 99\}$,
$\{23, 100, 141, 283, 222\}$,\adfsplit
$\{223, 186, 216, 140, 137\}$,
$\{256, 99, 120, 154, 255\}$,
$\{68, 80, 221, 134, 49\}$,\adfsplit
$\{0, 1, 90, 92, 279\}$,
$\{0, 6, 28, 39, 187\}$,
$\{0, 5, 131, 145, 259\}$,\adfsplit
$\{0, 15, 62, 211, 266\}$,
$\{0, 36, 74, 197, 265\}$,
$\{0, 23, 52, 105, 214\}$,\adfsplit
$\{0, 4, 13, 71, 106\}$,
$\{0, 10, 27, 137, 205\}$

}
\adfLgap \noindent by the mapping:
$x \mapsto x +  j \adfmod{256}$ for $x < 256$,
$x \mapsto (x +  j \adfmod{16}) + 256$ for $256 \le x < 272$,
$x \mapsto (x +  j \adfmod{8}) + 272$ for $272 \le x < 280$,
$x \mapsto (x +  j \adfmod{4}) + 280$ for $x \ge 280$,
$0 \le j < 256$.
\ADFvfyParStart{(284, ((14, 256, ((256, 1), (16, 1), (8, 1), (4, 1)))), ((32, 8), (28, 1)))} 

\adfDgap
\noindent{\boldmath $ 32^{9} 12^{1} $}~
With the point set $\{0, 1, \dots, 299\}$ partitioned into
 residue classes modulo $9$ for $\{0, 1, \dots, 287\}$, and
 $\{288, 289, \dots, 299\}$,
 the design is generated from

\adfLgap {\adfBfont
$\{62, 286, 91, 191, 297\}$,
$\{232, 46, 255, 254, 194\}$,
$\{149, 182, 138, 99, 123\}$,\adfsplit
$\{45, 10, 143, 40, 111\}$,
$\{3, 194, 141, 16, 13\}$,
$\{288, 273, 216, 160, 167\}$,\adfsplit
$\{0, 31, 82, 119, 292\}$,
$\{0, 6, 14, 91, 218\}$,
$\{0, 28, 74, 114, 179\}$,\adfsplit
$\{0, 19, 67, 92, 213\}$,
$\{0, 16, 58, 192, 226\}$,
$\{0, 4, 47, 115, 170\}$,\adfsplit
$\{0, 12, 116, 136, 157\}$,
$\{0, 2, 89, 141, 158\}$

}
\adfLgap \noindent by the mapping:
$x \mapsto x +  j \adfmod{288}$ for $x < 288$,
$x \mapsto (x +  j \adfmod{12}) + 288$ for $x \ge 288$,
$0 \le j < 288$.
\ADFvfyParStart{(300, ((14, 288, ((288, 1), (12, 1)))), ((32, 9), (12, 1)))} 

\adfDgap
\noindent{\boldmath $ 36^{6} 20^{1} $}~
With the point set $\{0, 1, \dots, 235\}$ partitioned into
 residue classes modulo $6$ for $\{0, 1, \dots, 215\}$, and
 $\{216, 217, \dots, 235\}$,
 the design is generated from

\adfLgap {\adfBfont
$\{153, 113, 187, 198, 14\}$,
$\{83, 195, 18, 223, 205\}$,
$\{62, 27, 229, 77, 138\}$,\adfsplit
$\{95, 150, 16, 175, 176\}$,
$\{0, 2, 69, 73, 230\}$,
$\{0, 3, 8, 49, 101\}$,\adfsplit
$\{0, 7, 23, 195, 218\}$,
$\{0, 31, 89, 152, 221\}$,
$\{0, 9, 22, 125, 163\}$,\adfsplit
$\{0, 17, 37, 146, 165\}$,
$\{0, 14, 47, 106, 133\}$

}
\adfLgap \noindent by the mapping:
$x \mapsto x +  j \adfmod{216}$ for $x < 216$,
$x \mapsto (x +  j \adfmod{12}) + 216$ for $216 \le x < 228$,
$x \mapsto (x - 228 +  j \adfmod{8}) + 228$ for $x \ge 228$,
$0 \le j < 216$.
\ADFvfyParStart{(236, ((11, 216, ((216, 1), (12, 1), (8, 1)))), ((36, 6), (20, 1)))} 

\adfDgap
\noindent{\boldmath $ 36^{6} 40^{1} $}~
With the point set $\{0, 1, \dots, 255\}$ partitioned into
 residue classes modulo $6$ for $\{0, 1, \dots, 215\}$, and
 $\{216, 217, \dots, 255\}$,
 the design is generated from

\adfLgap {\adfBfont
$\{141, 222, 44, 70, 42\}$,
$\{74, 70, 24, 231, 29\}$,
$\{80, 242, 165, 168, 133\}$,\adfsplit
$\{179, 195, 116, 221, 10\}$,
$\{92, 141, 130, 37, 210\}$,
$\{0, 1, 15, 22, 252\}$,\adfsplit
$\{0, 33, 94, 146, 232\}$,
$\{0, 23, 62, 130, 231\}$,
$\{0, 13, 40, 140, 165\}$,\adfsplit
$\{0, 8, 67, 141, 223\}$,
$\{0, 20, 135, 179, 239\}$,
$\{0, 9, 143, 160, 236\}$,\adfsplit
$\{0, 10, 29, 87, 121\}$

}
\adfLgap \noindent by the mapping:
$x \mapsto x +  j \adfmod{216}$ for $x < 216$,
$x \mapsto (x +  j \adfmod{36}) + 216$ for $216 \le x < 252$,
$x \mapsto (x +  j \adfmod{4}) + 252$ for $x \ge 252$,
$0 \le j < 216$.
\ADFvfyParStart{(256, ((13, 216, ((216, 1), (36, 1), (4, 1)))), ((36, 6), (40, 1)))} 

\adfDgap
\noindent{\boldmath $ 36^{7} 12^{1} $}~
With the point set $\{0, 1, \dots, 263\}$ partitioned into
 residue classes modulo $7$ for $\{0, 1, \dots, 251\}$, and
 $\{252, 253, \dots, 263\}$,
 the design is generated from

\adfLgap {\adfBfont
$\{261, 210, 176, 149, 243\}$,
$\{262, 161, 176, 242, 205\}$,
$\{263, 148, 97, 59, 180\}$,\adfsplit
$\{147, 30, 50, 125, 33\}$,
$\{12, 7, 219, 132, 85\}$,
$\{0, 2, 41, 144, 194\}$,\adfsplit
$\{0, 4, 68, 150, 156\}$,
$\{0, 9, 57, 122, 181\}$,
$\{0, 1, 24, 55, 200\}$,\adfsplit
$\{0, 10, 46, 72, 183\}$,
$\{0, 8, 19, 93, 109\}$,
$\{0, 12, 25, 148, 234\}$

}
\adfLgap \noindent by the mapping:
$x \mapsto x +  j \adfmod{252}$ for $x < 252$,
$x \mapsto (x +  j \adfmod{12}) + 252$ for $x \ge 252$,
$0 \le j < 252$.
\ADFvfyParStart{(264, ((12, 252, ((252, 1), (12, 1)))), ((36, 7), (12, 1)))} 

\adfDgap
\noindent{\boldmath $ 36^{7} 32^{1} $}~
With the point set $\{0, 1, \dots, 283\}$ partitioned into
 residue classes modulo $7$ for $\{0, 1, \dots, 251\}$, and
 $\{252, 253, \dots, 283\}$,
 the design is generated from

\adfLgap {\adfBfont
$\{281, 11, 248, 237, 134\}$,
$\{280, 226, 111, 176, 193\}$,
$\{267, 129, 36, 109, 28\}$,\adfsplit
$\{16, 92, 48, 150, 189\}$,
$\{174, 176, 2, 147, 258\}$,
$\{256, 123, 248, 154, 191\}$,\adfsplit
$\{0, 3, 22, 186, 268\}$,
$\{0, 34, 87, 162, 275\}$,
$\{0, 5, 45, 211, 271\}$,\adfsplit
$\{0, 6, 16, 152, 204\}$,
$\{0, 4, 59, 71, 89\}$,
$\{0, 1, 24, 96, 132\}$,\adfsplit
$\{0, 13, 38, 148, 191\}$,
$\{0, 9, 60, 122, 169\}$

}
\adfLgap \noindent by the mapping:
$x \mapsto x +  j \adfmod{252}$ for $x < 252$,
$x \mapsto (x - 252 + 2 j \adfmod{24}) + 252$ for $252 \le x < 276$,
$x \mapsto (x - 276 + 2 j \adfmod{8}) + 276$ for $x \ge 276$,
$0 \le j < 252$.
\ADFvfyParStart{(284, ((14, 252, ((252, 1), (24, 2), (8, 2)))), ((36, 7), (32, 1)))} 

\adfDgap
\noindent{\boldmath $ 36^{8} 24^{1} $}~
With the point set $\{0, 1, \dots, 311\}$ partitioned into
 residue classes modulo $8$ for $\{0, 1, \dots, 287\}$, and
 $\{288, 289, \dots, 311\}$,
 the design is generated from

\adfLgap {\adfBfont
$\{237, 80, 107, 274, 302\}$,
$\{180, 282, 214, 265, 149\}$,
$\{151, 221, 82, 225, 4\}$,\adfsplit
$\{168, 293, 126, 124, 42\}$,
$\{127, 254, 77, 240, 153\}$,
$\{14, 131, 106, 240, 25\}$,\adfsplit
$\{0, 1, 7, 10, 22\}$,
$\{0, 5, 35, 54, 183\}$,
$\{0, 38, 93, 146, 191\}$,\adfsplit
$\{0, 18, 188, 265, 304\}$,
$\{0, 57, 123, 181, 309\}$,
$\{0, 13, 59, 268, 301\}$,\adfsplit
$\{0, 29, 89, 132, 227\}$,
$\{0, 36, 173, 225, 307\}$,
$\{0, 28, 75, 114, 197\}$

}
\adfLgap \noindent by the mapping:
$x \mapsto x +  j \adfmod{288}$ for $x < 288$,
$x \mapsto (x +  j \adfmod{24}) + 288$ for $x \ge 288$,
$0 \le j < 288$.
\ADFvfyParStart{(312, ((15, 288, ((288, 1), (24, 1)))), ((36, 8), (24, 1)))} 

\adfDgap
\noindent{\boldmath $ 40^{6} 20^{1} $}~
With the point set $\{0, 1, \dots, 259\}$ partitioned into
 residue classes modulo $6$ for $\{0, 1, \dots, 239\}$, and
 $\{240, 241, \dots, 259\}$,
 the design is generated from

\adfLgap {\adfBfont
$\{93, 62, 47, 114, 245\}$,
$\{229, 143, 202, 152, 153\}$,
$\{233, 130, 74, 57, 254\}$,\adfsplit
$\{238, 96, 21, 246, 104\}$,
$\{0, 2, 109, 237, 256\}$,
$\{0, 7, 104, 178, 253\}$,\adfsplit
$\{0, 33, 68, 115, 155\}$,
$\{0, 11, 37, 116, 160\}$,
$\{0, 13, 38, 58, 101\}$,\adfsplit
$\{0, 19, 89, 111, 140\}$,
$\{0, 14, 53, 183, 199\}$,
$\{0, 4, 32, 145, 179\}$

}
\adfLgap \noindent by the mapping:
$x \mapsto x +  j \adfmod{240}$ for $x < 240$,
$x \mapsto (x +  j \adfmod{20}) + 240$ for $x \ge 240$,
$0 \le j < 240$.
\ADFvfyParStart{(260, ((12, 240, ((240, 1), (20, 1)))), ((40, 6), (20, 1)))} 

\adfDgap
\noindent{\boldmath $ 44^{6} 20^{1} $}~
With the point set $\{0, 1, \dots, 283\}$ partitioned into
 residue classes modulo $6$ for $\{0, 1, \dots, 263\}$, and
 $\{264, 265, \dots, 283\}$,
 the design is generated from

\adfLgap {\adfBfont
$\{278, 238, 169, 215, 26\}$,
$\{283, 105, 109, 176, 186\}$,
$\{127, 122, 23, 213, 96\}$,\adfsplit
$\{129, 118, 13, 271, 74\}$,
$\{197, 163, 214, 195, 270\}$,
$\{0, 3, 65, 79, 271\}$,\adfsplit
$\{0, 16, 45, 103, 227\}$,
$\{0, 15, 83, 130, 194\}$,
$\{0, 8, 41, 141, 166\}$,\adfsplit
$\{0, 20, 63, 113, 172\}$,
$\{0, 7, 28, 135, 170\}$,
$\{0, 1, 39, 119, 208\}$,\adfsplit
$\{0, 9, 22, 49, 176\}$

}
\adfLgap \noindent by the mapping:
$x \mapsto x +  j \adfmod{264}$ for $x < 264$,
$x \mapsto (x +  j \adfmod{12}) + 264$ for $264 \le x < 276$,
$x \mapsto (x - 276 +  j \adfmod{8}) + 276$ for $x \ge 276$,
$0 \le j < 264$.
\ADFvfyParStart{(284, ((13, 264, ((264, 1), (12, 1), (8, 1)))), ((44, 6), (20, 1)))} 

\adfDgap
\noindent{\boldmath $ 44^{6} 40^{1} $}~
With the point set $\{0, 1, \dots, 303\}$ partitioned into
 residue classes modulo $6$ for $\{0, 1, \dots, 263\}$, and
 $\{264, 265, \dots, 303\}$,
 the design is generated from

\adfLgap {\adfBfont
$\{270, 83, 205, 54, 249\}$,
$\{285, 170, 21, 179, 175\}$,
$\{266, 199, 261, 154, 30\}$,\adfsplit
$\{264, 177, 78, 131, 31\}$,
$\{6, 73, 89, 194, 166\}$,
$\{108, 110, 125, 145, 28\}$,\adfsplit
$\{0, 7, 75, 245, 279\}$,
$\{0, 10, 71, 163, 301\}$,
$\{0, 8, 31, 87, 232\}$,\adfsplit
$\{0, 1, 58, 128, 131\}$,
$\{0, 13, 34, 125, 213\}$,
$\{0, 39, 89, 148, 272\}$,\adfsplit
$\{0, 11, 25, 52, 302\}$,
$\{0, 22, 65, 103, 298\}$,
$\{0, 49, 123, 178, 288\}$

}
\adfLgap \noindent by the mapping:
$x \mapsto x +  j \adfmod{264}$ for $x < 264$,
$x \mapsto (x - 264 + 5 j \adfmod{40}) + 264$ for $x \ge 264$,
$0 \le j < 264$.
\ADFvfyParStart{(304, ((15, 264, ((264, 1), (40, 5)))), ((44, 6), (40, 1)))} 

\adfDgap
\noindent{\boldmath $ 44^{7} 8^{1} $}~
With the point set $\{0, 1, \dots, 315\}$ partitioned into
 residue classes modulo $7$ for $\{0, 1, \dots, 307\}$, and
 $\{308, 309, \dots, 315\}$,
 the design is generated from

\adfLgap {\adfBfont
$\{73, 46, 126, 6, 131\}$,
$\{310, 73, 86, 148, 103\}$,
$\{107, 55, 214, 220, 173\}$,\adfsplit
$\{223, 207, 121, 113, 131\}$,
$\{44, 291, 313, 21, 190\}$,
$\{185, 253, 289, 0, 257\}$,\adfsplit
$\{0, 11, 48, 135, 192\}$,
$\{0, 9, 97, 179, 208\}$,
$\{0, 12, 90, 163, 227\}$,\adfsplit
$\{0, 20, 74, 134, 229\}$,
$\{0, 1, 3, 25, 131\}$,
$\{0, 15, 46, 187, 258\}$,\adfsplit
$\{0, 34, 103, 142, 186\}$,
$\{0, 26, 59, 176, 219\}$

}
\adfLgap \noindent by the mapping:
$x \mapsto x +  j \adfmod{308}$ for $x < 308$,
$x \mapsto (x - 308 + 2 j \adfmod{8}) + 308$ for $x \ge 308$,
$0 \le j < 308$.
\ADFvfyParStart{(316, ((14, 308, ((308, 1), (8, 2)))), ((44, 7), (8, 1)))} 

\adfDgap
\noindent{\boldmath $ 48^{6} 20^{1} $}~
With the point set $\{0, 1, \dots, 307\}$ partitioned into
 residue classes modulo $6$ for $\{0, 1, \dots, 287\}$, and
 $\{288, 289, \dots, 307\}$,
 the design is generated from

\adfLgap {\adfBfont
$\{304, 76, 167, 165, 210\}$,
$\{140, 289, 245, 237, 226\}$,
$\{224, 84, 149, 172, 301\}$,\adfsplit
$\{272, 299, 125, 120, 33\}$,
$\{76, 269, 110, 123, 79\}$,
$\{221, 171, 301, 55, 54\}$,\adfsplit
$\{0, 7, 16, 219, 278\}$,
$\{0, 14, 77, 123, 178\}$,
$\{0, 27, 83, 157, 208\}$,\adfsplit
$\{0, 22, 93, 133, 206\}$,
$\{0, 29, 64, 182, 249\}$,
$\{0, 15, 115, 143, 176\}$,\adfsplit
$\{0, 4, 25, 57, 251\}$,
$\{0, 20, 58, 139, 209\}$

}
\adfLgap \noindent by the mapping:
$x \mapsto x +  j \adfmod{288}$ for $x < 288$,
$x \mapsto (x +  j \adfmod{16}) + 288$ for $288 \le x < 304$,
$x \mapsto (x +  j \adfmod{4}) + 304$ for $x \ge 304$,
$0 \le j < 288$.
\ADFvfyParStart{(308, ((14, 288, ((288, 1), (16, 1), (4, 1)))), ((48, 6), (20, 1)))} 

\adfDgap
\noindent{\boldmath $ 1^{16} 9^{5} $}~
With the point set $\{0, 1, \dots, 60\}$ partitioned into
 residue classes modulo $5$ for $\{0, 1, \dots, 44\}$, and
 residue classes modulo $16$ for $\{45, 46, \dots, 60\}$,
 the design is generated from

\adfLgap {\adfBfont
$\{0, 14, 16, 17, 58\}$,
$\{0, 1, 8, 46, 60\}$,
$\{0, 11, 32, 50, 52\}$,\adfsplit
$\{0, 29, 38, 45, 55\}$,
$\{1, 5, 17, 23, 57\}$,
$\{0, 13, 31, 44, 59\}$,\adfsplit
$\{0, 26, 34, 37, 43\}$,
$\{0, 2, 6, 24, 28\}$,
$\{0, 3, 36, 48, 54\}$,\adfsplit
$\{0, 7, 19, 49, 56\}$,
$\{1, 22, 48, 51, 52\}$

}
\adfLgap \noindent by the mapping:
$x \mapsto x + 3 j \adfmod{45}$ for $x < 45$,
$x \mapsto (x +  j \adfmod{15}) + 45$ for $45 \le x < 60$,
$60 \mapsto 60$,
$0 \le j < 15$.
\ADFvfyParStart{(61, ((11, 15, ((45, 3), (15, 1), (1, 1)))), ((9, 5), (1, 16)))} 

\adfDgap
\noindent{\boldmath $ 1^{46} 5^{3} $}~
With the point set $\{0, 1, \dots, 60\}$ partitioned into
 residue classes modulo $46$ for $\{0, 1, \dots, 45\}$, and
 residue classes modulo $3$ for $\{46, 47, \dots, 60\}$,
 the design is generated from

\adfLgap {\adfBfont
$\{16, 42, 32, 13, 18\}$,
$\{27, 9, 11, 24, 32\}$,
$\{0, 1, 33, 37, 46\}$,\adfsplit
$\{0, 9, 31, 51, 55\}$,
$\{0, 6, 44, 49, 59\}$,
$\{0, 17, 20, 58, 60\}$,\adfsplit
$\{0, 25, 26, 45, 56\}$,
$\{0, 10, 28, 34, 53\}$,
$\{0, 7, 11, 41, 54\}$,\adfsplit
$\{1, 13, 41, 47, 55\}$,
$\{1, 8, 14, 16, 26\}$,
$\{1, 23, 32, 53, 54\}$

}
\adfLgap \noindent by the mapping:
$x \mapsto x + 3 j \adfmod{45}$ for $x < 45$,
$45 \mapsto 45$,
$x \mapsto (x - 46 +  j \adfmod{15}) + 46$ for $x \ge 46$,
$0 \le j < 15$.
\ADFvfyParStart{(61, ((12, 15, ((45, 3), (1, 1), (15, 1)))), ((1, 46), (5, 3)))} 

\adfDgap
\noindent{\boldmath $ 4^{5} 8^{5} $}~
With the point set $\{0, 1, \dots, 59\}$ partitioned into
 residue classes modulo $5$ for $\{0, 1, \dots, 39\}$, and
 residue classes modulo $5$ for $\{40, 41, \dots, 59\}$,
 the design is generated from

\adfLgap {\adfBfont
$\{0, 2, 21, 33, 51\}$,
$\{0, 1, 7, 9, 54\}$,
$\{1, 5, 27, 45, 49\}$,\adfsplit
$\{0, 13, 37, 44, 46\}$,
$\{0, 29, 47, 53, 56\}$,
$\{0, 4, 27, 28, 57\}$,\adfsplit
$\{0, 3, 26, 32, 58\}$,
$\{0, 18, 41, 48, 49\}$

}
\adfLgap \noindent by the mapping:
$x \mapsto x + 2 j \adfmod{40}$ for $x < 40$,
$x \mapsto (x +  j \adfmod{20}) + 40$ for $x \ge 40$,
$0 \le j < 20$.
\ADFvfyParStart{(60, ((8, 20, ((40, 2), (20, 1)))), ((8, 5), (4, 5)))} 
%
%
\end{proof}


\section*{ORCID}

\noindent A. D. Forbes     \url{https://orcid.org/0000-0003-3805-7056}



\begin{thebibliography}{99}
\bibitem{Forbes2022G} A. D. Forbes,
    Group divisible designs with block size five,
    \textit{Australas. J. Combin.} \textbf{87} (2023), 1--11.
\end{thebibliography}
\end{document}